\newtheorem{theorem}{Theorem}[section]
\newtheorem{lemma}[theorem]{Lemma}
\newcommand{\C}{\mathbb{C}}
\newcommand{\R}{\mathbb{R}}
\newcommand{\Z}{\mathbb{Z}}
\newcommand{\cL}{\mathcal{L}}
\newcommand{\cR}{\mathcal{R}}
\newcommand{\eps}{\varepsilon}
\begin{document}

\keywords{%
Cauchy singular integral operator, indices of submultiplicative
functions, Carleson curve, one-sided invertibility, piecewise
continuous function, weighted Nakano space.}

\subjclass[2000]{Primary 47B35; Secondary 42B20, 42B25, 46E30.}

\title[Singular integral operators]{%
Singular integral operators\\
on Nakano spaces with weights\\
having finite sets of discontinuities}


\author{Alexei Yu. Karlovich}
\address{
Departamento de Matem\'atica,
Faculdade de Ci\^encias e Tecnologia\\
Universidade Nova de Lisboa,
Quinta da Torre,
2829--516 Caparica,
Portugal\\
E-mail: oyk@fct.unl.pt}

\dedicatory{To the memory of Professor Israel Gohberg (23.08.1928--12.10.2009)}

\begin{abstract}
In 1968, Gohberg and Krupnik found a Fredholm criterion for
singular integral operators of the form $aP+bQ$,
where $a,b$ are piecewise continuous functions and $P,Q$ are
complementary projections associated to the Cauchy singular
integral operator, acting on Lebesgue spaces over Lyapunov curves.
We extend this result to the case of Nakano spaces (also known as
variable Lebesgue spaces) with certain weights having finite sets
of discontinuities on arbitrary Carleson curves.
\end{abstract}
\maketitle
\section{Introduction}
We say that a rectifiable curve $\Gamma$ in the complex plane is
simple if it is homeomorphic to a segment or to a circle. We equip
$\Gamma$ with Lebesgue length measure $|d\tau|$. The
\textit{Cauchy singular integral} of $f\in L^1(\Gamma)$ is defined
by
\[
(Sf)(t):=\frac{1}{\pi i}\int_\Gamma\frac{f(\tau)}{\tau-t}d\tau
\quad(t\in\Gamma).
\]
This integral is understood in the principal value sense, that is,
\[
\int_\Gamma\frac{f(\tau)}{\tau-t}d\tau:=
\lim_{R\to 0}
\int_{\Gamma\setminus\Gamma(t,R)} \frac{f(\tau)}{\tau-t}d\tau,
\]
where $\Gamma(t,R):=\{\tau\in\Gamma:|\tau-t|<R\}$ for $R>0$. David
\cite{David84} (see also \cite[Theorem~4.17]{BK97}) proved that
the Cauchy singular integral generates the bounded operator $S$ on
the Lebesgue space $L^p(\Gamma)$, $1<p<\infty$, if and only if
$\Gamma$ is a \textit{Carleson} (\textit{Ahlfors-David regular})
\textit{curve}, that is,
\[
\sup_{t\in\Gamma}\sup_{R>0}\frac{|\Gamma(t,R)|}{R}<\infty,
\]
where for any measurable set $\Omega\subset\Gamma$ the symbol
$|\Omega|$ denotes its measure.

A measurable function $w:\Gamma\to[0,\infty]$ is referred to as a
\textit{weight function} or simply a \textit{weight} if
$0<w(\tau)<\infty$ for almost all $\tau\in\Gamma$. Suppose
$p:\Gamma\to[1,\infty]$ is a measurable a.e. finite function.
Denote by $L^{p(\cdot)}(\Gamma,w)$ the set of all measurable
complex-valued functions $f$ on $\Gamma$ such that
\[
\int_\Gamma |f(\tau)w(\tau)/\lambda|^{p(\tau)}|d\tau|<\infty
\]
for some $\lambda=\lambda(f)>0$. This set becomes a Banach space
when equipped with the Luxemburg-Nakano norm
\[
\|f\|_{p(\cdot),w}:=\inf\left\{\lambda>0: \int_\Gamma
|f(\tau)w(\tau)/\lambda|^{p(\tau)}|d\tau|\le 1\right\}.
\]
If $p$ is constant, then $L^{p(\cdot)}(\Gamma,w)$ is nothing else
but the weighted Lebesgue space. Therefore, it is natural to refer
to $L^{p(\cdot)}(\Gamma,w)$ as a \textit{weighted generalized
Lebesgue space with variable exponent} or simply as a
\textit{weighted variable Lebesgue space}. This is a special case
of Musielak-Orlicz spaces \cite{Musielak83} (see also
\cite{KR91}). Nakano \cite{Nakano50} considered these spaces
(without weights) as examples of so-called modular spaces, and
sometimes the spaces $L^{p(\cdot)}(\Gamma,w)$ are referred to as
weighted Nakano spaces.
\begin{theorem}[Kokilashvili, Paatashvili, S.~Samko]
\label{th:KPS} Suppose $\Gamma$ is a simple rectifiable curve and
$p:\Gamma\to(1,\infty)$ is a continuous function satisfying the
Dini-Lipschitz condition
\begin{equation}\label{eq:Dini-Lipschitz}
|p(\tau)-p(t)|\le -C_\Gamma/\log|\tau-t| \quad\mbox{whenever}\quad
|\tau-t|\le1/2,
\end{equation}
where $C_\Gamma$ is a positive constant depending only on
$\Gamma$. Let $t_1,\dots,t_n\in\Gamma$ be pairwise distinct points
and $\lambda_1,\dots,\lambda_n\in\R$. The Cauchy singular integral operator $S$
is bounded on the Nakano space $L^{p(\cdot)}(\Gamma,w)$ with
weight given by
\begin{equation}\label{eq:Khvedelidze-weight}
w(\tau)=\prod_{j=1}^n |\tau-t_j|^{\lambda_j}\quad(\tau\in\Gamma)
\end{equation}
if and only if $\Gamma$ is a Carleson curve and
$0<1/p(t_j)+\lambda_j<1$ for all $j\in\{1,\dots,n\}$.
\end{theorem}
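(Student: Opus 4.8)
The plan is to prove necessity and sufficiency separately; in both directions the behaviour of $w$ at the points $t_1,\dots,t_n$ is isolated by a localization argument, and the analysis on a small arc around $t_j$ is reduced to a constant-exponent, single-power-weight problem by means of \eqref{eq:Dini-Lipschitz}. Throughout I use that $p$, being continuous on the compact curve $\Gamma$, satisfies $1<p_-:=\min_\Gamma p\le\max_\Gamma p=:p_+<\infty$, so that $X:=L^{p(\cdot)}(\Gamma,w)$ is, via $f\mapsto fw$, isometrically isomorphic to $L^{p(\cdot)}(\Gamma)$ --- hence a reflexive Banach function space --- whose associate space is $X':=L^{p'(\cdot)}(\Gamma,w^{-1})$ with an equivalent norm.

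\emph{Necessity.} Assume $S$ is bounded on $X$. Restricting to a subarc $\gamma$ bounded away from all the $t_j$, the weight $w$ is bounded above and below on $\gamma$, so $L^{p(\cdot)}(\gamma,w)=L^{p(\cdot)}(\gamma)$ with equivalent norms; extending functions by zero and restricting shows that $\chi_\gamma S\chi_\gamma$ is bounded on $L^{p(\cdot)}(\gamma)$, and since $\|\chi_{\Gamma(t,R)}\|_{L^{p(\cdot)}(\gamma)}$ is comparable to $|\Gamma(t,R)|^{1/p(t)}$ for small $R$ (by \eqref{eq:Dini-Lipschitz}), an adaptation of the necessity part of David's argument \cite{David84}, run on finitely many such subarcs, forces $\Gamma$ to be a Carleson curve. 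Then $S$ is a Calder\'on--Zygmund operator on the space of homogeneous type $(\Gamma,|d\tau|)$, and its Banach-space adjoint is, up to isometric multiplication operators, again a Cauchy singular integral operator, hence also a Calder\'on--Zygmund operator, bounded on $X'$. To obtain $1/p(t_j)+\lambda_j>0$, fix $j$ and choose $g$ bounded, vanishing in a neighbourhood of every $t_k$, with $\int_\Gamma\frac{g(\tau)}{\tau-t_j}\,d\tau\ne 0$; then $g\in X$ while $Sg$ extends continuously to $t_j$ with a non-zero value, and since $w(\tau)^{p(\tau)}$ is comparable to $|\tau-t_j|^{\lambda_j p(t_j)}$ near $t_j$ (again by \eqref{eq:Dini-Lipschitz}) and $|\Gamma(t_j,R)|$ is comparable to $R$, membership $Sg\in X$ would force $\lambda_j p(t_j)>-1$. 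Applying the same argument to the adjoint operator on $X'$ (exponent $p'(\cdot)$, power $-\lambda_j$ at $t_j$) gives $-\lambda_j p'(t_j)>-1$, that is $1/p(t_j)+\lambda_j<1$.

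\emph{Sufficiency.} Assume $\Gamma$ is a Carleson curve and $0<1/p(t_j)+\lambda_j<1$ for every $j$. These inequalities are exactly what makes $w$ locally lie in $L^{p(\cdot)}(\Gamma)$ and $w^{-1}$ locally lie in $L^{p'(\cdot)}(\Gamma)$, so $X$ and $X'$ are genuine Banach function spaces of the same type, and the hypothesis is self-dual: $0<1/p'(t_j)+(-\lambda_j)<1$. Since $\Gamma$ is Carleson, $S$ is a Calder\'on--Zygmund operator on $(\Gamma,|d\tau|)$ and admits a pointwise sparse domination; the standard duality estimate $\bigl\langle\mathcal A_{\mathcal S}f,g\bigr\rangle\le C\int_\Gamma Mf\cdot Mg\,|d\tau|\le C\|Mf\|_X\|Mg\|_{X'}$ for sparse averaging operators $\mathcal A_{\mathcal S}$ then reduces the boundedness of $S$ on $X$ to the boundedness of the Hardy--Littlewood maximal operator $M$ on both $X$ and $X'$. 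By self-duality it suffices to bound $M$ on a space $L^{q(\cdot)}(\Gamma,W)$ with $q$ satisfying \eqref{eq:Dini-Lipschitz} and $W(\tau)=\prod_k|\tau-t_k|^{\mu_k}$, $0<1/q(t_k)+\mu_k<1$. Using a partition of unity subordinate to small arcs $\gamma_1,\dots,\gamma_n$ around $t_1,\dots,t_n$ and their complement $\gamma_0$, on which $W$ is bounded above and below, the contributions with bounded weight are controlled by the boundedness of $M$ on the unweighted $L^{q(\cdot)}(\Gamma)$ (known on Carleson curves under \eqref{eq:Dini-Lipschitz}), and one is left with the boundedness of $M$ on $L^{q(\cdot)}(\gamma_k,|\tau-t_k|^{\mu_k})$ for each $k$. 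On the small arc $\gamma_k$, the Diening-type key estimate granted by \eqref{eq:Dini-Lipschitz} compares the variable-exponent modular with the constant-exponent ($q\equiv q(t_k)$) modular over Carleson balls, up to controlled errors, reducing the matter to the boundedness of $M$ on $L^{q(t_k)}(\gamma_k,|\tau-t_k|^{\mu_k})$, which holds precisely when $|\tau-t_k|^{\mu_k}$ is a Muckenhoupt $A_{q(t_k)}$-weight on the Carleson curve $\gamma_k$, i.e. when $-1/q(t_k)<\mu_k<1/q'(t_k)$: exactly the hypothesis.

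The step I expect to be the main obstacle is this last passage from the variable to the constant exponent in the presence of the power weight: the Dini--Lipschitz comparison of modulars produces error terms of the form $|\Gamma(t,R)|^{-1}$ together with integrable tails, and one must verify that these remain admissible after multiplication by $|\tau-t_k|^{\mu_k}$, uniformly over all Carleson balls; this is where both the Carleson regularity $|\Gamma(t,R)|\approx R$ and the precise endpoints $-1/q(t_k)$ and $1/q'(t_k)$ of the admissible range enter. Everything else is either classical weighted theory on Carleson curves or formal manipulation of Banach function spaces and their associates. (Alternatively, sufficiency can be deduced from the classical boundedness of $S$ on $L^{r}(\Gamma,v)$ for every Muckenhoupt weight $v$ on the Carleson curve $\Gamma$ via a weighted Rubio de Francia extrapolation in variable Lebesgue spaces, once one checks --- by the same local reduction --- that $w$ satisfies the relevant variable Muckenhoupt condition; the difficulty then migrates into that check.)
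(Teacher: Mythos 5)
This theorem is not proved in the paper at all: it is quoted as Theorem~A of Kokilashvili--Paatashvili--Samko \cite{KPS06}, and the paper's own contribution is the generalization (Theorem~\ref{th:boundedness}), whose sufficiency part runs through a different chain than yours --- the transference result of Kokilashvili--Samko (Theorem~\ref{th:KS}: boundedness of $M$ on $L^{p(\cdot)/(p(\cdot)-p_0)}(\Gamma,w^{-p_0})$ implies boundedness of $S$ on $L^{p(\cdot)}(\Gamma,w)$) rather than your sparse-domination/duality scheme, and whose necessity part invokes the general-weight machinery of \cite{Karlovich03} (Theorem~\ref{th:Karlovich-necessity}) rather than explicit test functions. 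Your overall architecture (reduce $S$ to $M$ on the space and its associate, localize, freeze the exponent at each $t_j$ via \eqref{eq:Dini-Lipschitz}, land on the constant-exponent Muckenhoupt condition) is a legitimate and in fact quite natural modern route, and your test-function argument for the strict inequalities is a genuine plus: note that for merely simple (non-Jordan) curves the paper's Theorem~\ref{th:boundedness}(b) only yields the non-strict inequalities, so the strictness in Theorem~\ref{th:KPS} really does require exploiting the specific power weight, exactly as you do.

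Two places in your sketch are genuine gaps rather than routine details. First, in the necessity part you derive the Carleson condition only by localizing to subarcs \emph{bounded away from all the $t_j$}; this cannot give the Carleson condition \emph{at} the points $t_j$ themselves, since the constants in your reduction (the equivalence of $L^{p(\cdot)}(\gamma,w)$ with $L^{p(\cdot)}(\gamma)$ and the norm of $\chi_\gamma S\chi_\gamma$) degenerate as $\gamma$ approaches a singularity of $w$, and the Carleson condition is a pointwise-uniform condition that can fail at a single point while holding locally uniformly elsewhere. The standard fix is the $A_p$-type test with pairs of characteristic functions of $\Gamma(t,R)$, in which the weight cancels between the space and its associate and which therefore works at every $t\in\Gamma$ including the $t_j$; this is what Theorem~\ref{th:Karlovich-necessity} encapsulates. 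Second, and more importantly, the step you yourself flag as ``the main obstacle'' --- boundedness of $M$ on $L^{q(\cdot)}(\gamma_k,|\tau-t_k|^{\mu_k})$ under $0<1/q(t_k)+\mu_k<1$ --- is exactly Theorem~\ref{th:KS-Khvedelidze} (= \cite[Theorem~A]{KS08}), i.e.\ the external result that carries the real analytic content of the sufficiency direction; a naive modular comparison between the variable and frozen exponents does not close it, because the error terms must be controlled uniformly over all Carleson balls after multiplication by the power weight, and the published proofs are substantially more involved than ``formal manipulation.'' So your proposal is best read as a correct reduction of Theorem~\ref{th:KPS} to the two imported theorems (a weighted maximal estimate and a mechanism passing from $M$ to $S$) that the paper also imports, rather than as a self-contained proof.
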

For the case of constant $p$ and sufficiently smooth curves, the
sufficiency portion of the above result was obtained more than
fifty years ago by Khvedelidze \cite{Khvedelidze56} (see also
\cite[Chap.~1, Theorem~4.1]{GK92}). The
necessity portion for constant $p$ goes back to Gohberg and
Krupnik \cite{GK70}. For the complete solution of the
boundedness problem for the operator $S$ on weighted standard
Lebesgue spaces $L^p(\Gamma,w)$ we refer to the survey paper by
Dynkin \cite{Dynkin87}, to the monographs by B\"ottcher and Yu.
Karlovich \cite{BK97}, by Khuskivadze, Kokilashvili, and
Paatashvili \cite{KKP98}, and by Genebashvili, Gogatishvili,
Kokilashvili, and Krbec \cite{GGKK98}.

Theorem~\ref{th:KPS} was proved in \cite[Theorem~A]{KPS06}. Later on, Kokilashvili,
N.~Samko, and S.~Samko \cite[Theorem~4.3]{KSS07-MN}
generalized the sufficiency portion of Theorem~\ref{th:KPS} to the case
of radial oscillating weights
\begin{equation}\label{eq:radial-weight}
w(\tau)=\prod_{j=1}^n \omega_j(|\tau-t_j|) \quad(\tau\in\Gamma),
\end{equation}
where $\omega_j:(0,|\Gamma|]\to(0,\infty)$ are some continuous
functions oscillating at zero. Those sufficient boundedness
conditions are expressed in terms of the Matuszewska-Orlicz
indices \cite{MO60,MO65} (see also \cite{M85,M89}) of the
functions $\omega_j$. The author observed that those conditions
are also necessary for the boundedness of the operator $S$ on the
weighted Nakano space $L^{p(\cdot)}(\Gamma,w)$ in the case of
Jordan curves $\Gamma$ (see \cite[Corollary~4.3]{Karlovich09-IWOTA07}
and also \cite{Karlovich09-JFSA}). Recall that a rectifiable curve
in the complex plane is said to be Jordan if it is homeomorphic to
a circle.

Now fix $t\in\Gamma$ and assume that $w$ is a weight such that the
operator $S$ is bounded on $L^{p(\cdot)}(\Gamma,w)$. In the
spectral theory of one-dimensional singular integral operators it
is important to know whether the operator $S$ is also bounded on
the space $L^{p(\cdot)}(\Gamma,\varphi_{t,\gamma}w)$, where
\[
\varphi_{t,\gamma}(\tau):=|(\tau-t)^\gamma|
\]
and $\gamma$ is an arbitrary complex number. For standard Lebesgue
spaces and arbitrary Muckenhoupt wights such $\gamma$ are
completely characterized by B\"ottcher and Yu. Karlovich
\cite[Chap.~3]{BK97}. Notice that if $\gamma$ is the imaginary
unit, then $\varphi_{t,i}$ coincides with
\[
\eta_t(\tau):=e^{-\arg(\tau-t)}
\]
(here and in what follows we choose a continuous brunch of the
argument on $\Gamma\setminus\{t\}$), and this function lies beyond
the class of radial oscillating weights considered by
Kokilashvili, N.~Samko, and S.~Samko \cite{KSS07-JFSA,KSS07-MN}.
The author \cite[Theorem~2.1]{Karlovich10-IWOTA08} found necessary and
sufficient conditions for the boundedness of the operator $S$ on
the space $L^{p(\cdot)}(\Gamma,\varphi_{t,\gamma})$.

Our first aim in this paper is to generalize known boundedness
results for the operator $S$ on the space $L^{p(\cdot)}(\Gamma,w)$
to the case of weights of the form $w(\tau)=\prod_{j=1}^n
\psi_j(\tau)$ where each $\psi_j$ is a continuous positive
function on $\Gamma\setminus\{t_j\}$ and $t_1,\dots,t_n\in\Gamma$
are pairwise distinct points. In particular, we allow functions
$\psi_j$ of the form
$\psi_j(\tau)=(\eta_{t_j}(\tau))^x\omega_j(|\tau-t_j|)$ where
$x\in\R$ and $\omega_j$ is an oscillating function as in
\cite{Karlovich09-IWOTA07,Karlovich09-JFSA,KSS07-JFSA,KSS07-MN}.

To formulate our first main result explicitly, we need some
definitions. Following \cite[Section~1.4]{BK97}, a function
$\varrho:(0,\infty)\to(0,\infty]$ is said to be \textit{regular}
if it is bounded from above in some open neighborhood of the point
$1$. A function $\varrho:(0,\infty)\to(0,\infty]$ is said to be
\textit{submultiplicative} if $\varrho(xy)\le\varrho(x)\varrho(y)$
for all $x,y\in(0,\infty)$. Clearly, if
$\varrho:(0,\infty)\to(0,\infty]$ is regular and
submultiplicative, then $\varrho(x)$ is finite for all
$x\in(0,\infty)$. Given a regular submultiplicative function
$\varrho:(0,\infty)\to(0,\infty)$, one defines
\begin{equation}\label{eq:indices-definition}
\alpha(\varrho):=\sup_{x\in(0,1)}\frac{\log\varrho(x)}{\log x},
\quad
\beta(\varrho):=\inf_{x\in(1,\infty)}\frac{\log\varrho(x)}{\log x}.
\end{equation}
One can show (see Theorem~\ref{th:indices}) that
$-\infty<\alpha(\varrho)\le\beta(\varrho)<+\infty$. Thus it is natural to
call $\alpha(\varrho)$ and $\beta(\varrho)$ the lower and upper indices of
$\varrho$, respectively.

Fix $t\in\Gamma$ and $d_t:=\max\limits_{\tau\in\Gamma}|\tau-t|$.
Following \cite[Section~1.5]{BK97}, for every continuous function
$\psi:\Gamma\setminus\{t\}\to(0,\infty)$, we define
\[
(W_t\psi)(x):=\left\{\begin{array}{lll}
\displaystyle
\sup_{0<R\le d_t}\left(\max_{\tau\in\Gamma:|\tau-t|=xR}\psi(\tau)/\min_{\tau\in\Gamma:|\tau-t|=R}\psi(\tau)\right)
&\mbox{for}& x\in(0,1],
\\
\displaystyle
\sup_{0<R\le d_t}\left(\max_{\tau\in\Gamma:|\tau-t|=R}\psi(\tau)/\min_{\tau\in\Gamma:|\tau-t|=x^{-1}R}\psi(\tau)\right)
&\mbox{for}& x\in[1,\infty)
\end{array}\right.
\]
and
\begin{eqnarray*}
(W_t^0\psi)(x) &=&\limsup_{R\to 0}
\left(\max_{\tau\in\Gamma:|\tau-t|=xR}\psi(\tau)/\min_{\tau\in\Gamma:|\tau-t|=R}\psi(\tau)\right)
\\
&=&\limsup_{R\to 0}
\left(\max_{\tau\in\Gamma:|\tau-t|=R}\psi(\tau)/\min_{\tau\in\Gamma:|\tau-t|=x^{-1}R}\psi(\tau)\right)
\end{eqnarray*}
for $x\in\R$.
The function $W_t\psi$ is always submultiplicative. Moreover, if $W_t\psi$ is
regular, then $W_t^0\psi$ is also regular and submultiplicative and
\[
\alpha(W_t\psi)=\alpha(W_t^0\psi),\quad\beta(W_t\psi)=\beta(W_t^0\psi)
\]
(see \cite[Lemmas~1.15 and 1.16]{BK97}). Our first main result is
the following.
\begin{theorem}\label{th:boundedness}
Suppose $\Gamma$ is a simple rectifiable curve and
$p:\Gamma\to(1,\infty)$ is a continuous function satisfying the
Dini-Lipschitz condition {\rm(\ref{eq:Dini-Lipschitz})}. Let
$t_1,\dots,t_n\in\Gamma$ be pairwise distinct points and
$\psi_j:\Gamma\setminus\{t_j\}\to(0,\infty)$ be continuous
functions such that the functions $W_{t_j}\psi_j$ are regular for
all $j\in\{1,\dots,n\}$.
\begin{enumerate}
\item[{\rm(a)}]
If $\Gamma$ is a simple Carleson curve and
\begin{equation}\label{eq:boundedness-condition}
0<1/p(t_j)+\alpha(W_{t_j}^0\psi_j), \quad
1/p(t_j)+\beta(W_{t_j}^0\psi_j)<1 \quad\mbox{for all}\quad
j\in\{1,\dots,n\},
\end{equation}
then the operator $S$ is bounded on the Nakano space
$L^{p(\cdot)}(\Gamma,w)$ with weight $w$ given by
\begin{equation}\label{eq:weight}
w(\tau):=\prod_{j=1}^n \psi_j(\tau) \quad (\tau\in\Gamma).
\end{equation}

\item[{\rm (b)}]
If the operator $S$ is bounded on the Nakano space
$L^{p(\cdot)}(\Gamma,w)$ with weight $w$ given by
{\rm(\ref{eq:weight})}, then $\Gamma$ is a Carleson curve and
\[
0\le 1/p(t_j)+\alpha(W_{t_j}^0\psi_j),
\quad
1/p(t_j)+\beta(W_{t_j}^0\psi_j)\le 1
\quad\mbox{for all}\quad
j\in\{1,\dots,n\}.
\]

\item[{\rm (c)}]
If $\Gamma$ is a rectifiable Jordan curve and the operator $S$
is bounded on the Nakano space $L^{p(\cdot)}(\Gamma,w)$ with
weight $w$ given by {\rm(\ref{eq:weight})}, then $\Gamma$ is a
Carleson curve and conditions {\rm(\ref{eq:boundedness-condition})}
are fulfilled.
\end{enumerate}
\end{theorem}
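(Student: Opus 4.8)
The plan is to reduce the weighted problem to a known criterion for the boundedness of $S$ on $L^{p(\cdot)}(\Gamma,\varphi_{t,\gamma})$ (of the type established in \cite[Theorem~2.1]{Karlovich10-IWOTA08}) together with the indices machinery of \cite[Chapters~1 and~3]{BK97}, localized at the points $t_1,\dots,t_n$. First I would show that boundedness of $S$ on $L^{p(\cdot)}(\Gamma,w)$ is, up to equivalence of weights, a \emph{local} property: away from the $t_j$ the weight $w$ is continuous and bounded away from $0$ and $\infty$, so multiplication by $w$ is an invertible operator on each such piece, and a standard localization principle (Simonenko--type, or the patching of local representatives used in \cite{BK97}) lets me treat each point $t_j$ in isolation. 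Near a fixed $t_j$ the factors $\psi_k$ with $k\neq j$ are continuous and nonvanishing, hence absorbed, and the remaining factor $\psi_j$ governs everything; since $W_{t_j}\psi_j$ is regular, the indices satisfy $-\infty<\alpha(W_{t_j}^0\psi_j)\le\beta(W_{t_j}^0\psi_j)<\infty$ by Theorem~\ref{th:indices}, and $\alpha(W_{t_j}\psi_j)=\alpha(W_{t_j}^0\psi_j)$, $\beta(W_{t_j}\psi_j)=\beta(W_{t_j}^0\psi_j)$.

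For part~(a), the key step is a comparison argument: given $\eps>0$, the definition of $\alpha(W_{t_j}^0\psi_j)$ and $\beta(W_{t_j}^0\psi_j)$ together with submultiplicativity yields, for each $j$, real numbers $\mu_j,\nu_j$ with $\alpha(W_{t_j}^0\psi_j)-\eps<\mu_j$, $\nu_j<\beta(W_{t_j}^0\psi_j)+\eps$ such that $\psi_j(\tau)$ is pointwise squeezed, near $t_j$, between constant multiples of $|\tau-t_j|^{\mu_j}$ and $|\tau-t_j|^{\nu_j}$ along each circle $|\tau-t_j|=R$. Choosing $\eps$ small enough that \eqref{eq:boundedness-condition} still holds with the power weights $\varphi_{t_j,\mu_j}$ and $\varphi_{t_j,\nu_j}$ in place of $\psi_j$, I invoke the boundedness of $S$ on $L^{p(\cdot)}(\Gamma,\prod_j\varphi_{t_j,\mu_j})$ and on $L^{p(\cdot)}(\Gamma,\prod_j\varphi_{t_j,\nu_j})$ from Theorem~\ref{th:KPS}, and then an interpolation/domination argument for modular (Musielak--Orlicz) norms transfers boundedness to the intermediate weight $w$. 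The point is that $w$ lies ``between'' two power weights for which the criterion is known, and the Nakano norm respects such pointwise two-sided bounds after the local reduction.

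For part~(b), I argue by contraposition through the same localization: if, say, $1/p(t_j)+\alpha(W_{t_j}^0\psi_j)<0$ for some $j$, then by the index characterization one can find along a sequence $R_k\to0$ points where $\psi_j$ majorizes $c\,|\tau-t_j|^{\lambda}$ with $1/p(t_j)+\lambda<0$; feeding the corresponding test functions (the usual $(\tau-t_j)^{-\delta}$–type functions adapted to the variable exponent) into $S$ shows $S$ is unbounded on $L^{p(\cdot)}(\Gamma,w)$, and symmetrically for the upper index exceeding $1$; boundedness of $S$ forces $\Gamma$ Carleson by the David criterion quoted in the Introduction. Part~(c) then follows by combining (b) with the extra rigidity available on Jordan curves: on a rectifiable Jordan curve one can rule out the boundary cases $1/p(t_j)+\alpha(W_{t_j}^0\psi_j)=0$ and $1/p(t_j)+\beta(W_{t_j}^0\psi_j)=1$ by a finer test-function estimate (as in \cite{Karlovich09-IWOTA07,Karlovich09-JFSA}), where the closedness of the curve prevents the degenerate behaviour that would otherwise be compatible with boundedness.

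The main obstacle I expect is the passage in part~(a) from the two bounding \emph{power} weights to the genuinely oscillating weight $w$: pointwise domination of weights does not in general imply an inequality of Luxemburg--Nakano norms in a form strong enough to carry boundedness of a singular (nonlocal, principal-value) operator. Making this rigorous requires either a careful Musielak--Orlicz interpolation with control of the constants uniformly in the variable exponent, or replacing the crude squeeze by a multiplicative perturbation $w=\varphi_{t_j,\mu_j}\cdot u_j$ with $u_j$ having indices arbitrarily close to $0$, and then showing that multiplication by such $u_j$ is a bounded (and suitably invertible) operator between the relevant Nakano spaces — essentially upgrading \cite[Lemmas~1.15--1.16]{BK97} to the variable-exponent setting. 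This is where the regularity hypothesis on $W_{t_j}\psi_j$ and the Dini--Lipschitz condition on $p$ must be used in tandem.
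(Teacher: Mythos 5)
The decisive step of your part~(a) --- squeezing $\psi_j$ pointwise between two power weights $|\tau-t_j|^{\mu_j}$ and $|\tau-t_j|^{\nu_j}$ for which Theorem~\ref{th:KPS} applies, and then transferring boundedness of $S$ to the ``intermediate'' weight $w$ by an ``interpolation/domination argument'' --- is a genuine gap, and you have correctly diagnosed it yourself in your last paragraph without repairing it. Pointwise two-sided bounds by admissible power weights do not imply boundedness of $S$ for the weight in between: already in the constant-$p$ case a weight trapped between $|\tau-t|^{\mu}$ and $|\tau-t|^{\nu}$ with both powers in $A_p(\Gamma)$ need not itself belong to $A_p(\Gamma)$ (the $A_p$ condition couples $\int w^p$ with $\int w^{-q}$, and an oscillating weight can saturate the upper bound in one factor and the lower bound in the other), which is precisely why the whole apparatus of the indices $\alpha(W_t^0\psi)$, $\beta(W_t^0\psi)$ and Theorem~\ref{th:BK} exists. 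There is no interpolation theorem ``with respect to the pointwise order of weights,'' and the multiplicative-perturbation fix you sketch ($w=\varphi_{t_j,\mu_j}u_j$ with $u_j$ of small indices) only relocates the problem, since multiplication by such a $u_j$ is not a bounded map between the two weighted Nakano spaces in the generality you need. The paper circumvents this obstacle entirely: it first proves (Theorem~\ref{th:boundedness-maximal}) that conditions (\ref{eq:boundedness-condition}) imply boundedness of the \emph{maximal} operator $M$ on the dual-type space $L^{p(\cdot)/(p(\cdot)-p_0)}(\Gamma,w^{-p_0})$ for a suitable $p_0\in(1,\min p)$, and then invokes the Kokilashvili--Samko transference result (Theorem~\ref{th:KS}) to conclude that $S$ is bounded on $L^{p(\cdot)}(\Gamma,w)$. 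Your domination strategy \emph{is} essentially what the paper does --- but only at the level of $M$, where it is legitimate because $M$ is a positive operator: the off-diagonal pieces $\chi_{\Gamma\setminus\omega}M_j\chi_{\omega}$ and $\chi_{\omega}M_j\chi_{\Gamma\setminus\omega}$ are dominated pointwise by power-weighted maximal operators via Lemma~\ref{le:estimate}, while the near-diagonal piece is handled by the Muckenhoupt-type sufficient condition of Theorem~\ref{th:KSS} on a small arc where $p$ is nearly constant. None of this survives for the principal-value operator $S$, whose cancellations destroy pointwise domination; the passage from $M$ to $S$ requires the genuinely nontrivial Theorem~\ref{th:KS}, which is absent from your outline.

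For parts (b) and (c) your test-function sketch points in the right direction in spirit, but as written it is not a proof: constructing the extremal functions and estimating $S$ on them in the variable-exponent, arbitrary-Carleson-curve setting is the content of the quoted results, not a routine computation. The paper instead derives (b) and (c) by bookkeeping: it quotes Theorem~\ref{th:Karlovich-necessity} (necessity of $\Gamma$ Carleson, $\log w\in BMO(\Gamma,t)$, and the index inequalities for $V_t^0w$, with strict inequalities on Jordan curves), then converts $\alpha(V_{t_j}^0w),\beta(V_{t_j}^0w)$ into $\alpha(W_{t_j}^0\psi_j),\beta(W_{t_j}^0\psi_j)$ via Lemmas~\ref{le:V-relations} and~\ref{le:WV-relations}. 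If you intend to reprove the necessity from scratch rather than cite it, you must actually exhibit the test functions and control their Luxemburg--Nakano norms under the Dini--Lipschitz hypothesis; otherwise cite the known necessity results and supply only the index conversion.
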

A bounded linear operator on a Banach space $X$ is said to be Fredholm if its
image ${\rm Im}\,A$ is closed in $X$ and the numbers $\dim{\rm Ker}\,A$
and $\dim(X/{\rm Im}\,A)$ are finite.

We equip a rectifiable Jordan curve $\Gamma$ with the
counter-clockwise orientation. Without loss of generality we will
assume that the origin lies inside the domain bounded by $\Gamma$.
By $PC(\Gamma)$ we denote the set of all $a\in L^\infty(\Gamma)$
for which the one-sided limits
\[
a(t\pm 0):=\lim_{\tau\to t\pm 0}a(\tau)
\]
exist and are finite at each point $t\in\Gamma$; here $\tau\to
t-0$ means that $\tau$ approaches $t$ following the orientation of
$\Gamma$, while $\tau\to t+0$ means that $\tau$ goes to $t$ in the
opposite direction. Functions in $PC(\Gamma)$ are called
\textit{piecewise continuous} functions.

In 1968, Gohberg and Krupnik \cite[Theorem~4]{GK68} (see also
\cite[Chap.~9, Theorem~3.1]{GK92}) found criteria for
one-sided invertibility of one-dimensional singular integral
operators of the form
\[
A=aP+bQ,\quad\mbox{where}\quad a,b\in PC(\Gamma),\quad
P:=(I+S)/2,\quad Q:=(I-S)/2
\]
acting on standard Lebesgue spaces $L^p(\Gamma)$ over Lyapunov
curves. Their Fredholm theory for one-dimensional singular integral
operators on standard Lebesgue spaces $L^p(\Gamma,w)$ with
Khvedelidze weights (\ref {eq:Khvedelidze-weight}) over Lyapunov
curves is presented in the monograph \cite{GK92} first published
in Russian in 1973. Generalizations of this theory to the case of
arbitrary Muckenhoupt weights and arbitrary Carleson curves are
contained in the monograph by B\"ottcher and Yu. Karlovich
\cite{BK97}.

Fredholmness of one-dimensional singular integral operators on
Nakano spaces (variable Lebesgue spaces) over sufficiently smooth
curves was studied for the first time by Kokilashvili and S. Samko
\cite{KS03}. The closely related Riemann-Hilbert boundary value
problem in weighted classes of Cauchy type integrals with density
in $L^{p(\cdot)}(\Gamma)$ was considered by Kokilashvili,
Paatashvili, and S. Samko \cite{KP08,KP09,KPS05,KPS08}. The author
\cite{Karlovich09-IWOTA07} found a Fredholm criterion for an
arbitrary operator in the Banach algebra of one-dimensional
singular integral operators with piecewise continuous coefficients
acting on Nakano spaces $L^{p(\cdot)}(\Gamma,w)$ with radial
oscillating weights (\ref{eq:radial-weight}) over so-called
logarithmic Carleson curves. Roughly speaking, logarithmic
Carleson curves are Carleson curves $\Gamma$ for which the weight
$\eta_t(\tau)$ is equivalent to a power weight
$|\tau-t|^{\lambda_t}$ with some $\lambda_t\in\R$ for each
$t\in\Gamma$. Further, this technical assumption on Carleson
curves was removed in \cite{Karlovich10-IWOTA08} but only in the
non-weighted case.

The aim of this paper is to prove an analogue of the
Gohberg-Krupnik Fredholm criterion for the operator
$aP+bQ$ acting on $L^{p(\cdot)}(\Gamma,w)$ in the case of
arbitrary Carleson curves and a wide class of weights, in
particular, including radial oscillating weights
(\ref{eq:radial-weight}). Having this result at hands, one can
construct a Fredholm theory for the Banach algebra of singular
integral operators with piecewise continuous coefficients by using
the machinery developed in \cite{BK97} exactly in the same way as
it was done in \cite{Karlovich09-IWOTA07,Karlovich10-IWOTA08}. We will not
present these results in this paper.

Now we prepare the formulation of our main result. Let
$L^{p(\cdot)}(\Gamma,w)$ be as in Theorem~\ref{th:boundedness}.
One can show (see Section~\ref{subsection:Indicator_functions}) that the functions
$\alpha_t^*,\beta_t^*:\R\to\R$ given by
\begin{equation}\label{eq:indicator*-1}
\alpha_{t_j}^*(x):=\alpha(W_{t_j}^0(\eta_t^x\psi_j)), \quad
\beta_{t_j}^*(x):=\beta(W_{t_j}^0(\eta_t^x\psi_j))
\end{equation}
for $j\in\{1,\dots,n\}$ and by
\begin{equation}\label{eq:indicator*-2}
\alpha_t^*(x):=\alpha\big(W_t^0(\eta_t^x)\big), \quad
\beta_t^*(x):=\beta\big(W_t^0(\eta_t^x)\big)
\end{equation}
for $t\notin\Gamma\setminus\{t_1,\dots,t_n\}$ are well-defined and
the set
\[
Y(p(t),\alpha_t^*,\beta_t^*):= \big\{
\gamma=x+iy\in\C:1/p(t)+\alpha_t^*(x)\le y\le 1/p(t)+\beta_t^*(x)
\big\}
\]
is connected and contains points with arbitrary real parts. Given
$z_1,z_2\in\C$, let
\[
\cL(z_1,z_2;p(t),\alpha_t^*,\beta_t^*):=
\big\{M_{z_1,z_2}(e^{2\pi\gamma}):\gamma\in
Y(p(t),\alpha_t^*,\beta_t^*)\big\}\cup\{z_1,z_2\},
\]
where
\begin{equation}\label{eq:Moebius}
M_{z_1,z_2}(\zeta):=(z_2\zeta-z_1)/(\zeta-1)
\end{equation}
is the M\"obius transform. The set
$\cL(z_1,z_2;p(t),\alpha_t^*,\beta_t^*)$ is referred to as the
\textit{leaf} about (or between) $z_1$ and $z_2$ determined by
$p(t),\alpha_t^*,\beta_t^*$. This is a connected set containing $z_1$
and $z_2$ for every $t\in\Gamma$.

For $a\in PC(\Gamma)$, denote by $\cR(a)$ the essential range of
$a$, i.e. let $\cR(a)$ be the set
\[
\bigcup_{t\in\Gamma}\{a(t-0),a(t+0)\}.
\]
Let $J_a$ stand the set of all points at which $a$ has a jump.
Clearly, we may write
\[
\cR(a)=\bigcup_{t\in\Gamma\setminus J_a}\{a(t)\}\cup \bigcup_{t\in
J_a}\{a(t-0),a(t+0)\}.
\]
We will say that a function $a\in PC(\Gamma)$ is
$L^{p(\cdot)}(\Gamma,w)$-nonsingular if
\[
0\notin\cR(a)\cup\bigcup_{t\in J_a}\cL(a(t-0),a(t+0);p(t),\alpha_t^*,\beta_t^*).
\]

Our second main result reads as follows.
\begin{theorem}\label{th:Fredholmness}
Suppose $\Gamma$ is a Carleson Jordan curve, $a,b\in PC(\Gamma)$,
and $p:\Gamma\to(1,\infty)$ is a continuous function satisfying
the Dini-Lipschitz condition
{\rm(\ref{eq:Dini-Lipschitz})}. Let $t_1,\dots,t_n\in\Gamma$ be
pairwise distinct points and
$\psi_j:\Gamma\setminus\{t_j\}\to(0,\infty)$ be continuous
functions such that the functions $W_{t_j}\psi_j$ are regular and
conditions {\rm(\ref{eq:boundedness-condition})} are fulfilled for
all $j\in\{1,\dots,n\}$. The operator $aP+bQ$ is Fredholm
on the Nakano space $L^{p(\cdot)}(\Gamma,w)$ with
weight $w$ given by {\rm(\ref{eq:weight})} if and only if
$\inf\limits_{t\in\Gamma}|b(t)|>0$
and the function $a/b$ is $L^{p(\cdot)}(\Gamma,w)$-nonsingular.
\end{theorem}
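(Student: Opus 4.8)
The plan is to reduce the Fredholm problem for $aP+bQ$ to the study of a model operator at each point $t\in\Gamma$ and then to apply a local principle. First I would recall that, since $P+Q=I$ and $PQ=QP=0$ modulo compact operators (which follows from the boundedness of $S$ on $L^{p(\cdot)}(\Gamma,w)$ guaranteed by Theorem~\ref{th:boundedness}(a) together with the standard identity $S^2=I$), one has $aP+bQ=(a-b)P+bI$. Hence, assuming $\inf_{t\in\Gamma}|b(t)|>0$, the operator $b^{-1}(aP+bQ)=cP+Q$ with $c:=a/b\in PC(\Gamma)$ differs from $aP+bQ$ by an invertible factor, so $aP+bQ$ is Fredholm if and only if $cP+Q$ is. When $\inf_{t\in\Gamma}|b(t)|=0$, one shows directly (using that $b$ is piecewise continuous, so $0\in\cR(b)$) that $aP+bQ$ cannot be Fredholm: either $0\in\cR(b)$ is attained as a limit of values and one constructs a singular sequence supported near the corresponding point, or $b$ has a jump to $0$ and a localized argument produces the same obstruction. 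Thus the substance is the criterion for $cP+Q$ with $c=a/b$.

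The heart of the argument is a local principle à la Allan--Douglas, or more precisely the Gohberg--Krupnik--Simonenko local principle in the form worked out in \cite{BK97}. One localizes the Banach algebra generated by $S$ and multiplication operators by piecewise continuous functions over the maximal ideal space of its center, which is homeomorphic to $\Gamma$. At a point $t\in\Gamma$ where $c$ is continuous, the local representative is just multiplication by the scalar $c(t)$, which is locally invertible precisely when $c(t)\neq 0$, i.e.\ when $0\notin\cR(c)$ at that point. At a jump point $t$ of $c$, the local algebra is generated by one localized projection, and the local spectrum of $cP+Q$ is a set obtained from the two one-sided limits $c(t-0)$, $c(t+0)$ by an arc joining them; the shape of this arc is exactly the leaf $\cL(c(t-0),c(t+0);p(t),\alpha_t^*,\beta_t^*)$. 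The identification of this arc with the leaf is where the weighted variable-exponent structure enters: one must compute the local spectrum of the operator $\chi_+ + c(t+0)^{-1}c(t-0)\,\chi_-$-type model on the weighted Nakano space, and this reduces — after passing to the localized curve and using the homogenization/limit-operator techniques — to understanding the spectrum of the Cauchy projection-type operator on a model space over $\R_+$ with the weight governed by $W_t^0(\eta_t^x\psi_j)$. The indicator functions $\alpha_t^*,\beta_t^*$ defined in \eqref{eq:indicator*-1}--\eqref{eq:indicator*-2} encode exactly which exponents $\gamma=x+iy$ keep $S$ bounded (by Theorem~\ref{th:boundedness} applied to the weight $\eta_t^x\psi_j$, with the boundedness strip $1/p(t)+\alpha_t^*(x)<y<1/p(t)+\beta_t^*(x)$), and the M\"obius transform $M_{z_1,z_2}$ in \eqref{eq:Moebius} translates the strip $Y(p(t),\alpha_t^*,\beta_t^*)$ into the leaf in the usual way, as in \cite[Chapters~6--7]{BK97}.

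Concretely, the steps I would carry out are: (1) establish the algebraic identities $S^2=I+$compact and the two-projection structure, reducing to $cP+Q$; (2) set up the Banach algebra $\mathfrak{A}$ generated by $S$ and $PC(\Gamma)$ on $L^{p(\cdot)}(\Gamma,w)$, show the relevant commutators are compact (using the Dini--Lipschitz condition and regularity of $W_{t_j}\psi_j$ to control oscillation), and verify the hypotheses of the local principle; (3) for continuous points of $c$, read off local invertibility $\Leftrightarrow c(t)\neq 0$; (4) for jump points $t\in J_c$, compute the local spectrum by passing to the model operator, invoking Theorem~\ref{th:boundedness} to determine the boundedness region of $S$ under the perturbed weight $\eta_t^\gamma w$, and identifying the resulting set with the leaf $\cL(c(t-0),c(t+0);p(t),\alpha_t^*,\beta_t^*)$; (5) assemble: $cP+Q$ is Fredholm iff it is locally invertible at every $t$, which happens iff $0\notin\cR(c)$ and $0$ lies in no leaf over a jump point, i.e.\ iff $c=a/b$ is $L^{p(\cdot)}(\Gamma,w)$-nonsingular; (6) handle the necessity of $\inf|b|>0$ separately as above.

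The main obstacle will be step (4): computing the local spectrum at a jump point over an \emph{arbitrary} Carleson curve with a weight $\psi_j$ that is merely continuous off $t_j$ and has regular $W_{t_j}\psi_j$. Unlike the power-weight or logarithmic-Carleson-curve cases, here there is no single power $|\tau-t|^{\lambda}$ governing the local behavior; instead the full submultiplicative function $W_t^0(\eta_t^x\psi_j)$ and its indices $\alpha_t^*(x),\beta_t^*(x)$ must be tracked, and one needs the sharp two-sided boundedness criterion of Theorem~\ref{th:boundedness} (together with its parts (b), (c) for the curve and weight) to pin down the boundary of the leaf exactly. The interplay between the geometry of the Carleson curve (through $W_t^0\eta_t$) and the oscillation of the weight (through $W_t^0\psi_j$), mediated by the submultiplicativity and the index inequalities $-\infty<\alpha\le\beta<\infty$ from Theorem~\ref{th:indices}, is the delicate point, and it is precisely what the machinery of \cite{BK97} was built to handle once the boundedness theorem is in place.
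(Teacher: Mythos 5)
Your overall strategy (reduce to $cP+Q$ with $c=a/b$, localize over $\Gamma$, read off invertibility at continuity points, identify the local obstruction at jump points with the leaf) is the right one, but the route you choose and the points you leave unexecuted differ from the paper in ways that matter. The paper does \emph{not} use a two-sided Allan--Douglas/Gohberg--Krupnik localization with local algebras and local spectra. It uses Simonenko's one-directional local principle (Theorem~\ref{th:local_principle}: local Fredholmness of representatives implies global Fredholmness) together with a Wiener--Hopf factorization argument: at each $t$ the coefficient $c$ is locally equivalent to $c_t g_{t,\gamma_t}$ with $\gamma_t$ determined by the jump via (\ref{eq:local-representative}), and Lemma~\ref{le:fact-sufficiency} converts the boundedness of $\varphi_{t,k_t-\gamma_t}S\varphi_{t,\gamma_t-k_t}I$ --- which is exactly what Theorem~\ref{th:boundedness}(a) delivers for the perturbed weight $\varphi_{t,k_t-\gamma_t}w$, after the index bookkeeping of Lemma~\ref{le:WW-estimates} --- into a Wiener--Hopf factorization of $g_{t,\gamma_t}$, hence Fredholmness of $g_{t,\gamma_t}P+Q$ by Theorem~\ref{th:factorization}. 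This is lighter than your plan: it never requires compactness of commutators $[S,cI]$ on $L^{p(\cdot)}(\Gamma,w)$, never identifies local algebras, and never computes a local spectrum. Also two small corrections: $P$ and $Q$ here are exact complementary projections (no ``modulo compacts'' is needed), and the factorization $aP+bQ=(bI)(cP+Q)$ is an exact operator identity; moreover the necessity of $\inf_t|b(t)|>0$ is obtained at once from Theorem~\ref{th:necessity} ($b^{-1}\in L^\infty$ whenever $aP+bQ$ is Fredholm), not by constructing singular sequences.

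The genuine gaps are in your steps (4) and (5). First, the computation of the ``local spectrum'' at a jump point and its identification with the leaf $\cL(c(t-0),c(t+0);p(t),\alpha_t^*,\beta_t^*)$ is asserted via ``homogenization/limit-operator techniques'' but not carried out; this is precisely the hard core of the theorem, and in the general Carleson/variable-exponent/oscillating-weight setting it is not available off the shelf. The paper replaces it by the concrete chain Theorem~\ref{th:boundedness} $\Rightarrow$ Lemma~\ref{le:fact-sufficiency} $\Rightarrow$ Theorem~\ref{th:factorization} on the sufficiency side. Second, and more seriously, your assembly step presumes that local invertibility is \emph{necessary} for Fredholmness, i.e.\ that $0$ lying on a leaf over a jump point obstructs Fredholmness. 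That direction does not follow from Simonenko's local principle and is not a formal consequence of your setup unless you have fully established the two-sided localization framework (central subalgebra modulo compacts, etc.). The paper imports it as Theorem~\ref{th:Fredholmness-necessity} (from \cite{Karlovich03}), combined with the identification $\alpha_t^*=\alpha_t$, $\beta_t^*=\beta_t$ of (\ref{eq:indicator-functions}), which in turn rests on Lemmas~\ref{le:V-relations}, \ref{le:WV-relations} and \ref{le:WW-estimates}. Without supplying either that necessity theorem or the full two-sided local machinery, your argument proves only the sufficiency half of the criterion.
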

For $b=1$, the above result generalizes \cite[Theorem~4.5]{Karlovich09-IWOTA07},
where the weights of the form (\ref{eq:radial-weight}) were considered
over so-called logarithmic Carleson curves, and \cite[Theorem~2.2]{Karlovich10-IWOTA08},
where underlying curves were arbitrary Carleson curves but no weights were
involved.

Although the main results of this paper are new, the methods of their proofs
are not new and known to experts in the field. We decided to provide
selfcontained proofs with complete formulations of auxiliary results
taken from other publications. So, this paper can be considered as a
short survey on the topic.

The paper is organized as follows. In Section~\ref{sec:Auxiliary_results},
we collect some auxiliary results on indices of submultiplicative
functions associated with curves and weights. In Section~\ref{sec:Maximal_operator},
following the approach of Kokilashvili, N. Samko, and S. Samko
\cite{KSS07-JFSA}, we prove that conditions (\ref{eq:boundedness-condition})
are sufficient for the boundedness of the maximal operator on Nakano spaces
$L^{p(\cdot)}(\Gamma,w)$ with weights of the form (\ref{eq:weight}).
With the aid of this result, we prove Theorem~\ref{th:boundedness}
in Section~\ref{sec:Singular_operator}. Section~\ref{sec:SIO} contains
basic results on singular integral operators with $L^\infty$ coefficients
on weighted Nakano spaces. Their proofs are analogous to the case of
standard weighted Lebesgue spaces (see e.g. \cite[Chap.~7-8]{GK92}).
In Section~\ref{sec:SIO-PC} we prove Theorem~\ref{th:Fredholmness}
following the approach of B\"ottcher and Yu.~Karlovich \cite[Chap.~7]{BK97}
(see also \cite{Karlovich09-IWOTA07,Karlovich10-IWOTA08}). Note that
Theorem~\ref{th:boundedness} plays a crucial role in the proof of
Theorem~\ref{th:Fredholmness}.
\section{Indices of submultiplicative functions}\label{sec:Auxiliary_results}
\subsection{Indices as limits}
The indices of a regular submultiplicative function
defined by (\ref{eq:indices-definition}) cen be calculated as limits
as $x\to 0$ and $x\to\infty$, respectively. The proof of the following
result can be found e.g. in \cite[Theorem~1.13]{BK97}.
\begin{theorem}[well-known]
\label{th:indices}
If a function $\varrho:(0,\infty)\to(0,\infty)$ is regular and
submultiplicative, then
\[
\alpha(\varrho)=\lim_{x\to 0}\frac{\log\varrho(x)}{\log x},
\quad
\beta(\varrho)=\lim_{x\to \infty}\frac{\log\varrho(x)}{\log x}
\]
and $-\infty<\alpha(\varrho)\le\beta(\varrho)<\infty$.
\end{theorem}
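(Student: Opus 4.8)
The plan is to pass to an additive setting. Writing $f(u):=\log\varrho(e^u)$ for $u\in\R$, submultiplicativity of $\varrho$ becomes subadditivity of $f$, i.e. $f(u+v)\le f(u)+f(v)$ for all $u,v\in\R$, while regularity of $\varrho$ means that $f$ is bounded from above on some interval $(-\delta,\delta)$. First I would record two elementary facts. From $\varrho(1)=\varrho(1\cdot 1)\le\varrho(1)^2$ we get $\varrho(1)\ge 1$, hence $f(0)\ge 0$; and subadditivity then gives $f(u)+f(-u)\ge f(0)\ge 0$ for every $u$. Next, splitting an arbitrary $u$ with $|u|<N\delta$ as $u=(u/N)+\dots+(u/N)$ and applying subadditivity $N$ times shows that $f$ is bounded from above on every bounded interval; in particular $\varrho$ is finite everywhere, and $c_T:=\sup_{|u|\le T}f(u)<\infty$ for each $T>0$.

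The core of the argument is the classical Fekete lemma for subadditive functions. For $u>0$ put $L:=\inf_{u>0}f(u)/u\in[-\infty,+\infty)$; I claim $\lim_{u\to+\infty}f(u)/u=L$. Clearly $\liminf_{u\to+\infty}f(u)/u\ge L$. For the reverse inequality, fix any real $A>L$ and choose $u_0>0$ with $f(u_0)/u_0<A$. Given $u>u_0$, write $u=nu_0+r$ with $n=\lfloor u/u_0\rfloor\ge 1$ and $r\in[0,u_0)$; then $f(u)\le nf(u_0)+f(r)\le nf(u_0)+c_{u_0}$, so
\[
\frac{f(u)}{u}\le\frac{f(u_0)}{u_0}\cdot\frac{nu_0}{u}+\frac{c_{u_0}}{u}.
\]
Since $nu_0/u\to 1$ and $c_{u_0}/u\to 0$ as $u\to+\infty$, we obtain $\limsup_{u\to+\infty}f(u)/u\le f(u_0)/u_0<A$, and letting $A\downarrow L$ finishes the claim. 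Applying this to the subadditive function $u\mapsto f(-u)$ and using $f(-u)/u=-f(-u)/(-u)$, one likewise gets $\lim_{u\to-\infty}f(u)/u=\sup_{u<0}f(u)/u$. Translating back through $x=e^u$ (so that $\log x=u$, $\log\varrho(x)=f(u)$, and $x<1$, $x>1$, $x\to 0$, $x\to\infty$ correspond to $u<0$, $u>0$, $u\to-\infty$, $u\to+\infty$), these two statements are exactly
\[
\alpha(\varrho)=\lim_{x\to 0}\frac{\log\varrho(x)}{\log x},\qquad
\beta(\varrho)=\lim_{x\to\infty}\frac{\log\varrho(x)}{\log x}.
\]

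It remains to prove $-\infty<\alpha(\varrho)\le\beta(\varrho)<+\infty$. Dividing $f(u)+f(-u)\ge 0$ by $-u<0$ gives $f(-u)/(-u)\le f(u)/u$ for every $u>0$; letting $u\to+\infty$, the left-hand side tends to $\alpha(\varrho)$ and the right-hand side to $\beta(\varrho)$, whence $\alpha(\varrho)\le\beta(\varrho)$. Finally $\beta(\varrho)=\inf_{x\in(1,\infty)}\log\varrho(x)/\log x\le\log\varrho(e)<\infty$, and $\alpha(\varrho)=\sup_{x\in(0,1)}\log\varrho(x)/\log x\ge\log\varrho(e^{-1})/(-1)>-\infty$, so both indices are finite. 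The only genuinely delicate point is the division-with-remainder estimate in the Fekete lemma, which is precisely where the uniform upper bound $c_{u_0}<\infty$ — and hence the regularity hypothesis, propagated from $(-\delta,\delta)$ to all bounded intervals — enters; everything else is bookkeeping with signs when transferring statements between $x\to 0$ and $u\to-\infty$.
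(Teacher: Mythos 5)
Your proof is correct, and it is the standard Fekete-type subadditivity argument: the paper itself gives no proof of this statement but simply cites \cite[Theorem~1.13]{BK97}, whose proof proceeds by exactly the same reduction to subadditive functions on $\R$. All the delicate points — propagating the upper bound from $(-\delta,\delta)$ to arbitrary bounded intervals, the division-with-remainder step, and the sign bookkeeping linking $x\to 0$ with $u\to-\infty$ — are handled correctly.
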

\subsection{Spirality indices}
The following result was proved in \cite[Theorem~1.18]{BK97} and
\cite[Proposition~3.1]{BK97}.
\begin{theorem}[B\"ottcher, Yu. Karlovich]
\label{th:BK-W-regularity}
Let $\Gamma$ be a simple Carleson curve and $t\in\Gamma$. For every
$x\in\R$, the functions $W_t(\eta_t^x)$ and $W_t^0(\eta_t^x)$ are regular
and submultiplicative and
\begin{eqnarray*}
\alpha\big(W_t(\eta_t^x)\big)
=
\alpha\big(W_t^0(\eta_t^x)\big)
&=&
\min\{\delta_t^-x,\delta_t^+x\},
\\
\beta\big(W_t(\eta_t^x)\big)
=
\beta\big(W_t^0(\eta_t^x)\big)
&=&
\max\{\delta_t^-x,\delta_t^+x\},
\end{eqnarray*}
where
\[
\delta_t^-:=\alpha(W_t^0\eta_t),\quad\delta_t^+:=\beta(W_t^0\eta_t).
\]
\end{theorem}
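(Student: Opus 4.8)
The plan is to reduce everything to the function $\eta_t$ itself and then use the submultiplicativity of the maps $W_t$ and $W_t^0$ together with a careful bookkeeping of signs. First I would observe that for a continuous branch of $\arg(\tau-t)$ on $\Gamma\setminus\{t\}$ we have $\eta_t^x(\tau)=e^{-x\arg(\tau-t)}$, so that $\eta_t^x=(\eta_t)^x$ pointwise; hence, for each fixed $R$, the quotient $\max_{|\tau-t|=xR}\eta_t^x(\tau)/\min_{|\tau-t|=R}\eta_t^x(\tau)$ is obtained from the corresponding data for $\eta_t$ by raising to the power $x$ if $x\ge 0$, and by inverting and then raising to the power $|x|$ if $x<0$ (because raising to a negative power reverses the roles of $\max$ and $\min$). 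Writing $V_t(x):=W_t(\eta_t)$ and $V_t^0(x):=W_t^0(\eta_t)$ for brevity, this gives the identities $W_t(\eta_t^x)=\bigl(V_t\bigr)^{x}$ for $x\ge 0$ and $W_t(\eta_t^x)(s)=\bigl(V_t(1/s)\bigr)^{-x}$ for $x<0$, and likewise for $W_t^0$. Since $V_t$ and $V_t^0$ are regular and submultiplicative (the first by the general fact quoted after the definition of $W_t$, the second by \cite[Lemmas~1.15 and 1.16]{BK97} once we know $W_t\eta_t$ is regular, which is part of the hypothesis hidden in the Carleson assumption via \cite[Proposition~3.1]{BK97}), and since positive powers and the substitution $s\mapsto 1/s$ preserve regularity and submultiplicativity, we conclude that $W_t(\eta_t^x)$ and $W_t^0(\eta_t^x)$ are regular and submultiplicative for every $x\in\R$.

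Next I would compute the indices. Recall $\alpha(V_t^0)=\delta_t^-$ and $\beta(V_t^0)=\delta_t^+$ by definition, and $\delta_t^-\le\delta_t^+$ by Theorem~\ref{th:indices}. Using Theorem~\ref{th:indices} to express indices as limits, for $x\ge 0$ we get from $W_t^0(\eta_t^x)=(V_t^0)^x$ that
\[
\alpha\bigl(W_t^0(\eta_t^x)\bigr)=\lim_{s\to 0}\frac{x\log V_t^0(s)}{\log s}=x\,\delta_t^-,
\qquad
\beta\bigl(W_t^0(\eta_t^x)\bigr)=x\,\delta_t^+ ,
\]
and since $x\ge 0$ these equal $\min\{\delta_t^- x,\delta_t^+ x\}$ and $\max\{\delta_t^- x,\delta_t^+ x\}$ respectively. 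For $x<0$, from $W_t^0(\eta_t^x)(s)=\bigl(V_t^0(1/s)\bigr)^{-x}$ and the substitution $r=1/s$ I would compute
\[
\alpha\bigl(W_t^0(\eta_t^x)\bigr)
=\lim_{s\to 0}\frac{-x\log V_t^0(1/s)}{\log s}
=\lim_{r\to\infty}\frac{-x\log V_t^0(r)}{-\log r}
=x\,\beta(V_t^0)=x\,\delta_t^+,
\]
and symmetrically $\beta\bigl(W_t^0(\eta_t^x)\bigr)=x\,\delta_t^-$; since $x<0$, multiplication by $x$ reverses the inequality $\delta_t^-\le\delta_t^+$, so again $\alpha\bigl(W_t^0(\eta_t^x)\bigr)=\min\{\delta_t^- x,\delta_t^+ x\}$ and $\beta\bigl(W_t^0(\eta_t^x)\bigr)=\max\{\delta_t^- x,\delta_t^+ x\}$. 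The case $x=0$ is trivial since $\eta_t^0\equiv 1$. The same computation with $W_t$ in place of $W_t^0$, using $\alpha(W_t\eta_t)=\alpha(W_t^0\eta_t)$ and $\beta(W_t\eta_t)=\beta(W_t^0\eta_t)$ from \cite[Lemmas~1.15 and 1.16]{BK97}, yields the identical formulas for $\alpha\bigl(W_t(\eta_t^x)\bigr)$ and $\beta\bigl(W_t(\eta_t^x)\bigr)$, which simultaneously establishes the asserted equalities between the $W_t$ and $W_t^0$ indices.

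The step I expect to require the most care is the precise passage between $W_t(\eta_t^x)$ and $(W_t\eta_t)^x$ when $x<0$: one must track how raising to a negative power interchanges the inner $\max$ and $\min$ over the circles $\{|\tau-t|=xR\}$ and $\{|\tau-t|=R\}$, and how this interacts with the two equivalent defining formulas for $W_t^0$ (the one with the numerator at radius $xR$ and the one with the denominator at radius $x^{-1}R$). A clean way to avoid errors is to first record, for general continuous positive $\psi$, the elementary identity relating $W_t(\psi^{-1})$ to $W_t\psi$ via the reflection $s\mapsto 1/s$, and then apply it with $\psi=\eta_t^{|x|}$. Everything else — regularity, submultiplicativity, and the index computations — then follows mechanically from Theorems~\ref{th:indices} and the monotonicity properties of $\log$, so I would not belabor those routine verifications.
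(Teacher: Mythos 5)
Your argument is correct, and it is essentially the argument the paper implicitly relies on: the paper states this theorem without proof, citing \cite{BK97} (Theorem~1.18 and Proposition~3.1), and what you have written out is precisely the content of Proposition~3.1 there --- equivalently, of the paper's own Lemma~\ref{le:indices-relations} specialized to $\psi=\eta_t$. The reduction via the identities $W_t(\psi^c)=(W_t\psi)^c$ for $c\ge 0$ and $\bigl(W_t(\psi^c)\bigr)(s)=\bigl((W_t\psi)(1/s)\bigr)^{-c}$ for $c<0$, followed by the limit formulas of Theorem~\ref{th:indices}, is exactly right, including the sign bookkeeping that makes $x\delta_t^+$ the minimum of $\{\delta_t^-x,\delta_t^+x\}$ when $x<0$. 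The one genuinely nontrivial input --- that the Carleson condition forces $W_t\eta_t$ to be regular --- you take from \cite{BK97}, which is appropriate since that is the analytic core of the statement and is also all the paper itself cites; just be aware that your proof is therefore a reduction of the general-$x$ case to the $x=1$ case rather than a self-contained argument. One cosmetic but important point: do not use $V_t$ and $V_t^0$ as shorthand for $W_t\eta_t$ and $W_t^0\eta_t$, since the paper reserves $V_tw$ and $V_t^0w$ for an entirely different (mean-oscillation) construction in Section~2.3, and the clash would mislead a reader.
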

The numbers $\delta_t^-$ and $\delta_t^+$ are called the lower and upper spirality
indices of $\Gamma$ at $t$. If $\Gamma$ is locally smooth at $t$, then
$\delta_t^-=\delta_t^+=0$. One says that $\Gamma$ is a logarithmic Carleson
curve if
\[
\arg(\tau-t)=-\delta_t\log|\tau-t|+O(1)
\quad\mbox{as}\quad\tau\to t
\]
for every $t\in\Gamma$. It is not difficult to see, that for such Curves
$\delta_t^-=\delta_t^+=\delta_t$ for every $t\in\Gamma$. However, arbitrary
Carleson curves have much more complicated behavior. Indeed, for given numbers
$\alpha,\beta\in\R$ such that $\alpha\le\beta$, one can construct a Carleson
curve such that $\alpha=\delta_t^-$ and $\beta=\delta_t^+$ at some point
$t\in\Gamma$ (see \cite[Proposition~1.21]{BK97}).
\subsection{Indices of powerlikeness}
Fix $t\in\Gamma$. Let $w:\Gamma\to[0,\infty]$ be a weight on $\Gamma$ such
that $\log w\in L^1(\Gamma(t,R))$ for every $R\in(0,d_t]$. Put
\[
H_{w,t}(R_1,R_2):=
\frac{\displaystyle\exp\left(\frac{1}{|\Gamma(t,R_1)|}\int_{\Gamma(t,R_1)}\log w(\tau)|d\tau|\right)}
{\displaystyle\exp\left(\frac{1}{|\Gamma(t,R_2)|}\int_{\Gamma(t,R_2)}\log w(\tau)|d\tau|\right)},
\quad R_1,R_2\in(0,d_t].
\]
Following \cite[Section~3.2]{BK97}, we define
\[
(V_tw)(x):=\left\{\begin{array}{lll}
\displaystyle
\sup_{0<R\le d_t}H_{w,t}(xR,R) &\mbox{for}& x\in(0,1],
\\
\displaystyle
\sup_{0<R\le d_t}H_{w,t}(R,x^{-1}R) &\mbox{for}& x\in[1,\infty)
\end{array}\right.
\]
and
\[
(V_t^0w)(x):=\limsup_{R\to 0}H_{w,t}(xR,R)=\limsup_{R\to 0}H_{w,t}(R,x^{-1}R)
\]
for $x\in\R$.

A function $f:\Gamma\to[-\infty,\infty]$ is said to have bounded mean oscillation
at a point $t\in\Gamma$ if $f\in L^1(\Gamma)$ and
\[
\sup_{R>0}\frac{1}{|\Gamma(t,R)|}\int_{\Gamma(t,R)}|f(\tau)-\Delta_t(f,R)|\,|d\tau|<\infty,
\]
where
\[
\Delta_t(f,R):=\frac{1}{|\Gamma(t,R)|}\int_{\Gamma(t,R)}f(\tau)|d\tau|
\quad(R>0).
\]
The class of all functions of bounded mean oscillation at $t\in\Gamma$ is
denoted by $BMO(\Gamma,t)$.

The following result give sufficient conditions for the
regularity of $V_tw$ and $V_t^0w$. It was proved in \cite[Theorem~3.3(a)]{BK97}
and  \cite[Lemma~3.5(a)]{BK97}.
\begin{theorem}[B\"ottcher, Yu. Karlovich]
\label{th:BK-V-regularity}
Suppose $\Gamma$ is a simple Carleson curve and $t\in\Gamma$. If
$w:\Gamma\to[0,\infty]$ is a weight such that $\log w\in BMO(\Gamma,t)$,
then the functions $V_tw$ and $V_t^0w$ are regular and submultiplicative and
\[
\alpha(V_tw)=\alpha(V_t^0w),
\quad
\beta(V_tw)=\beta(V_t^0w).
\]
\end{theorem}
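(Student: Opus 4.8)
The plan is to trace everything back to a single two-sided estimate on the quotient $H_{w,t}$. First I would observe that, since $\Gamma$ is a simple Carleson curve, the truncations $\Gamma(t,R)$ are uniformly comparable to balls: for $0<R\le d_t$ one has $R\le|\Gamma(t,R)|\le C_\Gamma R$, the right inequality being the Carleson condition and the left one following from connectedness of $\Gamma$ (which forces $\Gamma$ to reach distance $R$ from $t$, so the portion of $\Gamma$ joining $t$ to that point has length at least $R$ and lies in $\overline{\Gamma(t,R)}$). Hence, if $0<R_1\le R_2\le d_t$, then $\Gamma(t,R_1)\subset\Gamma(t,R_2)$ with $|\Gamma(t,R_2)|\le C_\Gamma(R_2/R_1)\,|\Gamma(t,R_1)|$, and the standard oscillation estimate for functions of bounded mean oscillation (first for $R_2=2R_1$, then iterated) produces a constant $c_0=c_0(C_\Gamma,\|\log w\|_{BMO(\Gamma,t)})$ with
\[
\big|\log H_{w,t}(R_1,R_2)\big|=\big|\Delta_t(\log w,R_1)-\Delta_t(\log w,R_2)\big|\le c_0\big(1+|\log_2(R_1/R_2)|\big)
\]
for all $R_1,R_2\in(0,d_t]$. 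In particular $H_{w,t}(R_1,R_2)$ is bounded above and below by positive constants whenever the ratio $R_1/R_2$ ranges over a fixed compact subset of $(0,\infty)$. This is the only point at which the $BMO$ and Carleson hypotheses enter.

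Next I would use the cocycle identity $H_{w,t}(R_1,R_2)H_{w,t}(R_2,R_3)=H_{w,t}(R_1,R_3)$, which is immediate from the definition (the averages being finite since $\log w\in L^1(\Gamma(t,R))$), together with a family of truncated functions: for $\delta\in(0,d_t]$ set $(V_t^{(\delta)}w)(x):=\sup_{0<R\le\delta}H_{w,t}(xR,R)$ for $x\in(0,1]$ and $(V_t^{(\delta)}w)(x):=\sup_{0<R\le\delta}H_{w,t}(R,x^{-1}R)$ for $x\in[1,\infty)$. Then $V_t^{(d_t)}w=V_tw$, the family decreases as $\delta\downarrow0$, and $\inf_{\delta>0}V_t^{(\delta)}w=\lim_{\delta\to0}V_t^{(\delta)}w=V_t^0w$ pointwise. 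The squeeze of the first paragraph shows each $V_t^{(\delta)}w$ is finite everywhere and bounded above near $x=1$, hence regular; writing an ``$xy$-step'' as an ``$x$-step'' followed by a ``$y$-step'' via the cocycle identity — checking separately the cases $x,y\le1$, $x,y\ge1$, and $x\le1\le y$, so that in each case the intermediate radii stay in $(0,\delta]$ — gives submultiplicativity of every $V_t^{(\delta)}w$, hence of $V_tw$ and of $V_t^0w$. Thus Theorem~\ref{th:indices} is applicable to $V_tw$, $V_t^0w$, and every $V_t^{(\delta)}w$, and $\alpha(V_tw)=\alpha(V_t^0w)$, $\beta(V_tw)=\beta(V_t^0w)$ will follow once the indices are shown to be stable under truncation.

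The core step is that $\alpha(V_t^{(\delta)}w)$ and $\beta(V_t^{(\delta)}w)$ do not depend on $\delta\in(0,d_t]$. For $\delta\le\delta'$ the inequality $V_t^{(\delta)}w\le V_t^{(\delta')}w$ together with the sign of $\log x$ gives $\alpha(V_t^{(\delta)}w)\ge\alpha(V_t^{(\delta')}w)$ and $\beta(V_t^{(\delta)}w)\le\beta(V_t^{(\delta')}w)$. For the reverse inequalities, given $x\in(0,1)$ and $\delta<R\le\delta'$ I would factor
\[
H_{w,t}(xR,R)=H_{w,t}(xR,x\delta)\,H_{w,t}(x\delta,\delta)\,H_{w,t}(\delta,R),
\]
bound the middle factor by $(V_t^{(\delta)}w)(x)$ (as $x\delta\le\delta\le d_t$), and bound each outer factor by a constant $C_1=C_1(\delta,\delta')\ge1$ using the first paragraph, since the radii stay $\le d_t$ and the ratios $R/\delta$ lie in the compact interval $[1,\delta'/\delta]$; this yields $(V_t^{(\delta')}w)(x)\le C_1^2\,(V_t^{(\delta)}w)(x)$ for all $x\in(0,1)$, and dividing by $\log x<0$ and letting $x\to0$ (Theorem~\ref{th:indices}) gives $\alpha(V_t^{(\delta')}w)\ge\alpha(V_t^{(\delta)}w)$; the mirror computation for $x>1$ gives $\beta(V_t^{(\delta')}w)\le\beta(V_t^{(\delta)}w)$. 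Hence $\alpha(V_t^{(\delta)}w)=\alpha(V_tw)$ and $\beta(V_t^{(\delta)}w)=\beta(V_tw)$ for every $\delta\in(0,d_t]$.

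Finally I would let $\delta\to0$. For fixed $x\in(0,1)$ the quantity $\log(V_t^{(\delta)}w)(x)/\log x$ is nondecreasing as $\delta\downarrow0$ with supremum $\log(V_t^0w)(x)/\log x$, so interchanging suprema and using definition {\rm(\ref{eq:indices-definition})},
\[
\alpha(V_t^0w)=\sup_{x\in(0,1)}\ \sup_{0<\delta\le d_t}\frac{\log(V_t^{(\delta)}w)(x)}{\log x}=\sup_{0<\delta\le d_t}\alpha(V_t^{(\delta)}w)=\alpha(V_tw),
\]
and the same argument with infima over $x\in(1,\infty)$ gives $\beta(V_t^0w)=\beta(V_tw)$, completing the proof. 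I expect the only genuinely delicate point to be the $\delta$-independence of the indices of the truncations, i.e. the cocycle factorisation combined with the observation that compact ratios force $H_{w,t}$ to be bounded above and below; the submultiplicativity bookkeeping and the final interchange of suprema are routine, and the $BMO$ and Carleson hypotheses are consumed entirely in the first paragraph.
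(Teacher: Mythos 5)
Your proposal is correct, but note that the paper does not actually prove this statement: it is quoted verbatim from B\"ottcher--Karlovich \cite[Theorem~3.3(a), Lemma~3.5(a)]{BK97}, so you are supplying a proof where the paper supplies only a citation. Your argument is essentially the one in that monograph, organized around the same three ingredients: (i) the doubling property $R\le|\Gamma(t,R)|\le C_\Gamma R$ of a Carleson curve combined with the standard $BMO$ oscillation estimate, giving $|\log H_{w,t}(R_1,R_2)|\le c_0(1+|\log_2(R_1/R_2)|)$ and hence regularity; (ii) the cocycle identity $H_{w,t}(R_1,R_2)H_{w,t}(R_2,R_3)=H_{w,t}(R_1,R_3)$ for submultiplicativity (your case analysis $x,y\le1$, $x,y\ge1$, $x\le1\le y$ is exactly what is needed to keep the intermediate radii admissible); and (iii) the $\delta$-independence of the indices of the truncations $V_t^{(\delta)}w$, proved by factoring $H_{w,t}(xR,R)=H_{w,t}(xR,x\delta)H_{w,t}(x\delta,\delta)H_{w,t}(\delta,R)$ and absorbing the outer factors into a constant, followed by the monotone interchange of suprema as $\delta\to0$; this last step is the $V_t$-analogue of \cite[Lemmas~1.15--1.16]{BK97}, which the paper invokes for $W_t$ versus $W_t^0$. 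All steps check out, including the two points one might worry about: $V_t^0w$ stays bounded away from $0$ (so its logarithm is finite) because of the two-sided bound in (i), and the multiplicative constant $C_1^2$ disappears in the limit $x\to0$ (respectively $x\to\infty$) by Theorem~\ref{th:indices}. The only cosmetic caveat is that your lower bound $|\Gamma(t,R)|\ge R$ uses $R\le d_t$, which is all that the definitions of $V_tw$ and $V_t^0w$ require.
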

The numbers $\alpha(V_t^0w)$ and $\beta(V_t^0w)$ are called the lower and
upper indices of powerlikeness of $w$ at $t\in\Gamma$, respectively.
This terminology can be explained by the simple fact that for the power weight
$w(\tau)=|\tau-t|^\lambda$ its indices of powerlikeness coincide and are
equal to $\lambda$.
\begin{lemma}\label{le:V-relations}
Let $\Gamma$ be a simple Carleson curve and
$t_1,\dots,t_n\in\Gamma$ be pairwise distinct points. Suppose
$\psi_j:\Gamma\setminus\{t_j\}\to(0,\infty)$ are continuous
functions for $j\in\{1,\dots.n\}$ and $w$ is the weight given by
{\rm(\ref{eq:weight})}. If $V_{t_j}^0w$ is regular for some
$j\in\{1,\dots,n\}$, then $V_{t_j}^0\psi_j$ is also regular and
\[
\alpha(V_{t_j}^0w)=\alpha(V_{t_j}^0\psi_j),
\quad
\beta(V_{t_j}^0w)=\beta(V_{t_j}^0\psi_j).
\]
\end{lemma}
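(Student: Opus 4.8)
The plan is to reduce the claim about $w$ to a claim about each individual factor $\psi_j$ by showing that the ``other'' factors $\psi_k$ with $k\neq j$ contribute nothing to the indices of powerlikeness at $t_j$. Fix $j$ and write $w=\psi_j\cdot g_j$ where $g_j:=\prod_{k\neq j}\psi_k$. Since the $t_k$ are pairwise distinct, $g_j$ is continuous and positive in a whole neighbourhood of $t_j$ on $\Gamma$, in particular bounded away from $0$ and $\infty$ there; hence $\log g_j$ is bounded near $t_j$, so $\log g_j\in BMO(\Gamma,t_j)$ and also $\log g_j\in L^1(\Gamma(t_j,R))$ for all small $R$. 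The first step is therefore to record that $\log\psi_j=\log w-\log g_j\in L^1(\Gamma(t_j,R))$ as well, so that $V_{t_j}^0\psi_j$ is defined.

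The key computational step is to compare $H_{w,t_j}$ with $H_{\psi_j,t_j}$. Because $\log w=\log\psi_j+\log g_j$, the averaging in the definition of $H$ splits additively in the exponent: $H_{w,t_j}(R_1,R_2)=H_{\psi_j,t_j}(R_1,R_2)\cdot H_{g_j,t_j}(R_1,R_2)$. Now I would estimate the $g_j$-factor. With $m:=\inf_{|\tau-t_j|\le\delta}\log g_j(\tau)$ and $M:=\sup_{|\tau-t_j|\le\delta}\log g_j(\tau)$ finite for some small $\delta$, each average $\frac{1}{|\Gamma(t_j,R)|}\int_{\Gamma(t_j,R)}\log g_j(\tau)|d\tau|$ lies in $[m,M]$ once $R\le\delta$, so $e^{m-M}\le H_{g_j,t_j}(R_1,R_2)\le e^{M-m}$ for all $R_1,R_2\le\delta$. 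Thus $H_{g_j,t_j}$ is bounded above and below by positive constants for small radii, which gives, for $x$ in any fixed compact subset of $(0,\infty)$, two-sided bounds $c_1(V_{t_j}^0\psi_j)(x)\le (V_{t_j}^0w)(x)\le c_2(V_{t_j}^0\psi_j)(x)$ (the $\limsup_{R\to0}$ only sees $R\le\delta$). In particular $V_{t_j}^0\psi_j$ is finite wherever $V_{t_j}^0w$ is, and regularity of $V_{t_j}^0w$ near $x=1$ transfers to $V_{t_j}^0\psi_j$; together with submultiplicativity of $V_{t_j}^0\psi_j$ (which holds generally, as for $V_t w$) this makes $V_{t_j}^0\psi_j$ a regular submultiplicative function, so Theorem~\ref{th:indices} applies to it.

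Finally, the indices are insensitive to bounded multiplicative perturbations: taking logarithms in the two-sided bound and dividing by $\log x$, the constant terms $\log c_1,\log c_2$ are killed in the limits $x\to0$ and $x\to\infty$ furnished by Theorem~\ref{th:indices}, whence $\alpha(V_{t_j}^0w)=\alpha(V_{t_j}^0\psi_j)$ and $\beta(V_{t_j}^0w)=\beta(V_{t_j}^0\psi_j)$. I expect the only genuine subtlety — rather than an obstacle — to be bookkeeping the interplay between $W_t$-type and $V_t$-type objects in the hypotheses of the surrounding theorems; the lemma itself is really just the observation that $V_t$ is a logarithmically averaged gadget and continuous positive factors away from $t$ are $O(1)$ contributions that drop out of the indices. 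One should be slightly careful that the definition of $V_{t_j}^0$ uses $\limsup$, not $\lim$: the additive split in the exponent is exact, so the product decomposition $H_{w}=H_{\psi_j}\cdot H_{g_j}$ together with the \emph{two-sided constant} bounds on $H_{g_j}$ is enough to pass the $\limsup$ through, and no independent convergence of the $g_j$-part is needed.
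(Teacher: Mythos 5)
Your proof is correct and follows essentially the same route as the paper's: localize to an arc around $t_j$ on which the remaining factors $\prod_{k\ne j}\psi_k$ are bounded above and below by positive constants, deduce the two-sided bound $c_1\,(V_{t_j}^0\psi_j)(x)\le (V_{t_j}^0w)(x)\le c_2\,(V_{t_j}^0\psi_j)(x)$, and observe that the constants disappear in the limits of Theorem~\ref{th:indices} defining $\alpha$ and $\beta$. The only cosmetic difference is that you split $H_{w,t_j}=H_{\psi_j,t_j}\cdot H_{g_j,t_j}$ multiplicatively and bound the second factor, whereas the paper first bounds $w$ pointwise by $C_1\psi_j\le w\le C_2\psi_j$ and then compares the $H$-functions; both yield the same constants $C_1/C_2$ and $C_2/C_1$.
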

\begin{proof}
Without loss of generality, assume that $V_{t_1}^0w$ is regular. Suppose
$\Gamma_1\subset\Gamma$ is an arc that contains the point $t_1$ but does not
contain the points $t_2,\dots,t_n$. Assume that $\Gamma_1$ is homeomorphic
to a segment. Then the functions $\psi_2,\dots,\psi_n$ are continuous on the
compact set $\Gamma_1$. Therefore there exist constants $C_1$ and $C_2$ such
that
\[
0< C_1\le \psi_2(\tau)\dots\psi_n(\tau)\le C_2<+\infty
\quad\mbox{for all}\quad\tau\in\Gamma_1.
\]
Then $C_1\psi_1(\tau)\le w(\tau)\le C_2\psi_1(\tau)$ for all $\tau\in\Gamma_1$
and
\[
\frac{C_1}{C_2}H_{\psi_1,t_1}(R_1,R_2)\le H_{w,t_1}(R_1,R_2)\le\frac{C_2}{C_1}H_{\psi_1,t_1}(R_1,R_2)
\]
for all $R_1,R_2\in\Big(0,\max\limits_{\tau\in\Gamma_1}|\tau-t_1|\Big)$.
Thus,
\[
\frac{C_1}{C_2}(V_{t_1}^0\psi_1)(x)\le (V_{t_1}^0w)(x)\le\frac{C_2}{C_1}(V_{t_1}^0\psi_1)(x)
\quad\mbox{for all}\quad x\in\R.
\]
These inequalities imply that if $V_{t_1}^0w$ is regular, then $V_{t_1}^0\psi_1$
is also regular and their indices coincide: $\alpha(V_{t_1}^0\psi_1)=\alpha(V_{t_1}^0w)$
and $\beta(V_{t_1}^0\psi_1)=\beta(V_{t_1}^0w)$
\end{proof}
\subsection{Relations between indices of submultiplicative functions}
The following statement is proved by analogy with \cite[Proposition~3.1]{BK97}.
\begin{lemma}\label{le:indices-relations}
Let $\Gamma$ be a simple rectifiable curve and $t\in\Gamma$.
Suppose $\psi:\Gamma\setminus\{t\}\to(0,\infty)$ is a continuous
function and $W_t\psi$ is regular. Then, for every $s\in\R$, the
functions $W_t(\psi^s)$ and $W_t^0(\psi^s)$ are regular and
\begin{eqnarray*}
\alpha\big(W_t(\psi^s)\big)
=
\alpha\big(W_t^0(\psi^s)\big)
&=&
\left\{\begin{array}{lll}
s\alpha(W_t^0\psi) & \mbox{if} & s\ge 0,
\\
s\beta(W_t^0\psi) & \mbox{if} & s<0,
\end{array}\right.
\\
\beta\big(W_t(\psi^s)\big)
=
\beta\big(W_t^0(\psi^s)\big)
&=&
\left\{\begin{array}{lll}
s\beta(W_t^0\psi) & \mbox{if} & s\ge 0,
\\
s\alpha(W_t^0\psi) & \mbox{if} & s<0.
\end{array}\right.
\end{eqnarray*}
\end{lemma}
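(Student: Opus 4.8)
The plan is to reduce the statement for $W_t(\psi^s)$ to the known relations between the indices of $\varrho$ and $\varrho^s$ for a regular submultiplicative function $\varrho$, and then to transfer the result to $W_t^0(\psi^s)$ using the fact (quoted from \cite[Lemmas~1.15 and 1.16]{BK97} in the excerpt) that regularity of $W_t\psi$ forces regularity of $W_t^0\psi$ with the same indices. The first and main observation I would establish is the pointwise identity
\[
W_t(\psi^s)=\big(W_t\psi\big)^s\quad\text{for }s\ge 0,
\qquad
W_t(\psi^s)(x)=\big(W_t\psi\big)^{-s}(1/x)\quad\text{for }s<0,
\]
where in the second formula $x\mapsto 1/x$ is the involution of $(0,\infty)$ that swaps $(0,1)$ and $(1,\infty)$. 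Indeed, in the defining supremum for $(W_t\psi)(x)$ with $x\in(0,1]$, raising $\psi$ to a nonnegative power $s$ simply raises each ratio
$\max_{|\tau-t|=xR}\psi(\tau)\big/\min_{|\tau-t|=R}\psi(\tau)$
to the power $s$ (maxima and minima of a nonnegative continuous function are preserved under $u\mapsto u^s$), and the supremum over $R$ of the $s$-th powers equals the $s$-th power of the supremum; the case $x\ge 1$ is identical. For $s<0$ the map $u\mapsto u^s$ is decreasing, so a maximum turns into a minimum and vice versa, which is exactly what interchanges the roles of $R$ and $xR$ and produces the argument reflection $x\mapsto 1/x$.

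Granting this identity, the computation of the indices is immediate from the definition \eqref{eq:indices-definition}. For $s\ge 0$,
\[
\alpha\big(W_t(\psi^s)\big)
=\sup_{x\in(0,1)}\frac{\log (W_t\psi)(x)^s}{\log x}
=s\sup_{x\in(0,1)}\frac{\log (W_t\psi)(x)}{\log x}
=s\,\alpha(W_t\psi),
\]
and likewise $\beta\big(W_t(\psi^s)\big)=s\,\beta(W_t\psi)$; in particular $W_t(\psi^s)$ is again regular since it is the $s$-th power of a function bounded above near $1$. For $s<0$, writing $y=1/x$ and using $\log(1/x)=-\log x$,
\[
\alpha\big(W_t(\psi^s)\big)
=\sup_{x\in(0,1)}\frac{\log (W_t\psi)(1/x)^{-s}}{\log x}
=\sup_{y\in(1,\infty)}\frac{(-s)\log (W_t\psi)(y)}{-\log y}
=s\,\beta(W_t\psi),
\]
and symmetrically $\beta\big(W_t(\psi^s)\big)=s\,\alpha(W_t\psi)$. (Regularity for $s<0$ follows because $(W_t\psi)(1/x)$ is bounded above on a neighborhood of $x=1$: submultiplicativity of the regular function $W_t\psi$ gives $1=(W_t\psi)(1)\le (W_t\psi)(u)(W_t\psi)(1/u)$, so $(W_t\psi)(1/u)\ge (W_t\psi)(u)^{-1}$ is bounded below, and one uses boundedness above of $W_t\psi$ on $(1,1+\eps)$ together with the two-sided local bound coming from regularity; alternatively one simply notes $(W_t\psi)(1/x)\le (W_t\psi)(1/x_0)(W_t\psi)(x_0/x)$ and $x_0/x$ stays near $1$.) Finally, since $W_t\psi$ is regular by hypothesis, Theorem~\ref{th:indices} applies and $\alpha(W_t\psi)=\alpha(W_t^0\psi)$, $\beta(W_t\psi)=\beta(W_t^0\psi)$ by the cited lemmas; the same lemmas applied to the regular function $W_t(\psi^s)$ give $\alpha\big(W_t(\psi^s)\big)=\alpha\big(W_t^0(\psi^s)\big)$ and $\beta\big(W_t(\psi^s)\big)=\beta\big(W_t^0(\psi^s)\big)$, which combined with the displayed formulas yields all four claimed equalities.

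The step I expect to require the most care is the pointwise identity $W_t(\psi^s)=(W_t\psi)^s$ (with the reflection for $s<0$): one must check that the interchange of $u\mapsto u^s$ with both the inner $\max$/$\min$ over the level set $\{|\tau-t|=R\}$ and the outer $\sup$ over $R\in(0,d_t]$ is legitimate, that the level sets are nonempty and compact so the extrema are attained, and that for $s<0$ the reversal of inequalities indeed matches the two branches of the definition of $W_t$ rather than producing some spurious term. Once that bookkeeping is done the index computation is purely formal. This is why the excerpt says the lemma ``is proved by analogy with \cite[Proposition~3.1]{BK97}'': there the same reflection trick is used for $W_t(\eta_t^x)$ as a function of the real parameter $x$, and here we are merely running it with a general continuous $\psi$ in place of $\eta_t$ and a scalar exponent $s$.
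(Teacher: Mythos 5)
Your proof is correct and is exactly the argument the paper intends: the paper gives no proof of this lemma, only the remark that it ``is proved by analogy with \cite[Proposition~3.1]{BK97}'', and your pointwise identity $W_t(\psi^s)=(W_t\psi)^s$ for $s\ge 0$, with the reflection $(W_t(\psi^s))(x)=\big((W_t\psi)(1/x)\big)^{-s}$ for $s<0$, followed by the transfer to $W_t^0$ via \cite[Lemmas~1.15 and 1.16]{BK97}, is that analogy carried out in full. The only cosmetic point is that your parenthetical justification of regularity for $s<0$ is more elaborate than necessary: since $-s>0$ and $u\mapsto u^{-s}$ is increasing, one only needs $(W_t\psi)(1/x)$ bounded above for $x$ near $1$, which is immediate because $1/x$ then also ranges over a neighbourhood of $1$ where $W_t\psi$ is bounded by regularity.
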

The next statement is certainly known to experts, however we were
unable to find the precise reference.
\begin{lemma}\label{le:WW-estimates}
Let $\Gamma$ be a simple rectifiable curve, $t\in\Gamma$, and
$\psi_1,\psi_2:\Gamma\setminus\{t\}\to (0,\infty)$ be
continuous functions such that the functions $W_t\psi_1$ and $W_t\psi_2$
are regular. Then the functions $W_t(\psi_1\psi_2)$ and
$W_t^0(\psi_1\psi_2)$ are regular and submultiplicative and
\[
\alpha(W_t\psi_1)+\alpha(W_t\psi_2)
\le
\alpha\big(W_t(\psi_1\psi_2)\big)
\le
\min\big\{
\alpha(W_t\psi_1)+\beta(W_t\psi_2),
\beta(W_t\psi_1)+\alpha(W_t\psi_2)
\big\},
\]
\[
\beta(W_t\psi_1)+\beta(W_t\psi_2)
\ge
\beta\big(W_t(\psi_1\psi_2)\big)
\ge
\max\big\{
\alpha(W_t\psi_1)+\beta(W_t\psi_2),
\beta(W_t\psi_1)+\alpha(W_t\psi_2)
\big\}.
\]
The same inequalities hold with $W_t$ replaced by $W_t^0$ in each occurrence.
\end{lemma}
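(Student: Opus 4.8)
The plan is to reduce everything to elementary properties of submultiplicative functions, exploiting the key identity $(W_t(\psi_1\psi_2))(x) \le (W_t\psi_1)(x)(W_t\psi_2)(x)$. First I would establish this pointwise estimate: for $x\in(0,1]$ one has
\[
\max_{|\tau-t|=xR}(\psi_1\psi_2)(\tau)\le \max_{|\tau-t|=xR}\psi_1(\tau)\cdot\max_{|\tau-t|=xR}\psi_2(\tau),
\quad
\min_{|\tau-t|=R}(\psi_1\psi_2)(\tau)\ge \min_{|\tau-t|=R}\psi_1(\tau)\cdot\min_{|\tau-t|=R}\psi_2(\tau),
\]
and dividing and taking the supremum over $R\in(0,d_t]$ yields $(W_t(\psi_1\psi_2))(x)\le(W_t\psi_1)(x)(W_t\psi_2)(x)$; the case $x\in[1,\infty)$ is identical, and the same argument with $\limsup_{R\to0}$ gives the analogue for $W_t^0$. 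Since $W_t\psi_1$ and $W_t\psi_2$ are regular (bounded near $1$) by hypothesis, their product is bounded near $1$, hence $W_t(\psi_1\psi_2)$ is regular; it is submultiplicative because every $W_t$ is. Then $W_t^0(\psi_1\psi_2)$ is regular and submultiplicative by the remarks preceding Theorem~\ref{th:boundedness} (Lemmas~1.15 and 1.16 of \cite{BK97}), and $\alpha(W_t(\psi_1\psi_2))=\alpha(W_t^0(\psi_1\psi_2))$, $\beta(W_t(\psi_1\psi_2))=\beta(W_t^0(\psi_1\psi_2))$, so it suffices to prove the inequalities for $W_t$.

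For the upper bounds on $\alpha$ and the lower bounds on $\beta$ I would feed the submultiplicative estimate into the index definitions \eqref{eq:indices-definition}. For $x\in(0,1)$, $\log x<0$, so
\[
\frac{\log(W_t(\psi_1\psi_2))(x)}{\log x}\le\frac{\log(W_t\psi_1)(x)}{\log x}+\frac{\log(W_t\psi_2)(x)}{\log x},
\]
but to get the sharp bound I instead split asymmetrically: write the estimate as $(W_t(\psi_1\psi_2))(x)\le(W_t\psi_1)(x)(W_t\psi_2)(x)$ and then, using that indices can be computed as limits (Theorem~\ref{th:indices}), take $x\to0^+$ to obtain $\alpha(W_t(\psi_1\psi_2))\le\alpha(W_t\psi_1)+\alpha(W_t\psi_2)$ — wait, this is the wrong direction. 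The correct route for the $\min$ bound is to use a different factorization: apply the submultiplicativity of $W_t(\psi_1\psi_2)$ itself together with $\psi_1 = (\psi_1\psi_2)\cdot\psi_2^{-1}$, i.e. $W_t\psi_1 \le W_t(\psi_1\psi_2)\cdot W_t(\psi_2^{-1})$, hence $\alpha(W_t\psi_1)\le\alpha(W_t(\psi_1\psi_2))+\alpha(W_t(\psi_2^{-1}))$, and by Lemma~\ref{le:indices-relations} with $s=-1$ we have $\alpha(W_t(\psi_2^{-1}))=-\beta(W_t\psi_2)$; rearranging gives $\alpha(W_t(\psi_1\psi_2))\ge\alpha(W_t\psi_1)+\beta(W_t\psi_2)$, which is the \emph{lower} bound on $\alpha$ contributed by the $\max$ in the second displayed inequality — so in fact the two asymmetric terms inside the $\min$ and $\max$ come from these crossed factorizations $\psi_1=(\psi_1\psi_2)\psi_2^{-1}$ and $\psi_2=(\psi_1\psi_2)\psi_1^{-1}$, while the symmetric bounds $\alpha(W_t\psi_1)+\alpha(W_t\psi_2)\le\alpha(W_t(\psi_1\psi_2))$ and $\beta(W_t(\psi_1\psi_2))\le\beta(W_t\psi_1)+\beta(W_t\psi_2)$ come directly from the product estimate $(W_t(\psi_1\psi_2))(x)\le(W_t\psi_1)(x)(W_t\psi_2)(x)$ by passing to the limit as $x\to0^+$ and $x\to\infty$ respectively.

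So the clean logical order is: (1) prove the product estimate and deduce regularity/submultiplicativity of $W_t(\psi_1\psi_2)$ and $W_t^0(\psi_1\psi_2)$; (2) from the product estimate, dividing by $\log x$ and letting $x\to0^+$, get $\alpha(W_t\psi_1)+\alpha(W_t\psi_2)\le\alpha(W_t(\psi_1\psi_2))$, and letting $x\to\infty$ get $\beta(W_t(\psi_1\psi_2))\le\beta(W_t\psi_1)+\beta(W_t\psi_2)$; (3) apply submultiplicativity of $W_t$ to the crossed factorizations together with Lemma~\ref{le:indices-relations} ($s=-1$) to get $\alpha(W_t(\psi_1\psi_2))\le\alpha(W_t\psi_1)+\beta(W_t\psi_2)$ and $\alpha(W_t(\psi_1\psi_2))\le\beta(W_t\psi_1)+\alpha(W_t\psi_2)$, and symmetrically $\beta(W_t(\psi_1\psi_2))\ge\alpha(W_t\psi_1)+\beta(W_t\psi_2)$, $\beta(W_t(\psi_1\psi_2))\ge\beta(W_t\psi_1)+\alpha(W_t\psi_2)$; combining (3) gives the $\min$ upper bound on $\alpha$ and the $\max$ lower bound on $\beta$, completing the chain. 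I expect the main subtlety — not really an obstacle but the one point needing care — to be the correct bookkeeping of signs in step (3): one must be careful that $W_t(\psi_2^{-1})$ is itself regular so that Lemma~\ref{le:indices-relations} applies, which follows because $W_t(\psi_2^{-1})(x)\le W_t(\psi_1\psi_2)(x)\,W_t\psi_1(x)$ is bounded near $1$, and that the inequality $\alpha(fg)\le\alpha(f)+\alpha(g)$ for submultiplicative $f,g$ (used implicitly) genuinely follows from $(fg)(x)\le f(x)g(x)$ by dividing by the negative quantity $\log x$ and taking $\limsup$ as $x\to0^+$. The $W_t^0$ versions require no extra work since all the relevant index equalities between $W_t$ and $W_t^0$ have already been recorded.
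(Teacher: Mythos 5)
Your overall architecture is sound and is essentially the paper's proof in different packaging: the paper also starts from the pointwise product estimate $(W_t(\psi_1\psi_2))(x)\le(W_t\psi_1)(x)(W_t\psi_2)(x)$ to obtain regularity and the two symmetric index bounds, and its reverse estimate $(W_t(\psi_1\psi_2))(x)\ge (W_t\psi_2)(x)/(W_t\psi_1)(x^{-1})$ is precisely your crossed factorization $\psi_2=(\psi_1\psi_2)\psi_1^{-1}$ rewritten via the identity $(W_t(\psi_1^{-1}))(x)=(W_t\psi_1)(x^{-1})$; invoking Lemma~\ref{le:indices-relations} with $s=-1$ simply packages that identity. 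The list of inequalities in your final step (3), and the reduction of the $W_t^0$ case to the $W_t$ case, are correct.

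The problem is that the sign bookkeeping you yourself single out as the delicate point is wrong as written, in two places. First, from $W_t\psi_1\le W_t(\psi_1\psi_2)\cdot W_t(\psi_2^{-1})$ you assert $\alpha(W_t\psi_1)\le\alpha(W_t(\psi_1\psi_2))+\alpha(W_t(\psi_2^{-1}))$ and ``rearrange'' to $\alpha(W_t(\psi_1\psi_2))\ge\alpha(W_t\psi_1)+\beta(W_t\psi_2)$ --- an inequality that is the \emph{reverse} of what the lemma asserts and is false in general (you also misattribute it to the $\max$ in the $\beta$-inequality). Second, your closing ``general fact'' that $f\le gh$ gives $\alpha(f)\le\alpha(g)+\alpha(h)$ ``by dividing by the negative quantity $\log x$'' has the direction backwards: dividing by a negative number reverses the inequality, so $f\le gh$ pointwise yields $\alpha(f)\ge\alpha(g)+\alpha(h)$ and $\beta(f)\le\beta(g)+\beta(h)$ after passing to the limits of Theorem~\ref{th:indices}. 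With the correct direction, the crossed factorization gives $\alpha(W_t\psi_1)\ge\alpha(W_t(\psi_1\psi_2))-\beta(W_t\psi_2)$, i.e. $\alpha(W_t(\psi_1\psi_2))\le\alpha(W_t\psi_1)+\beta(W_t\psi_2)$, and likewise $\beta(W_t(\psi_1\psi_2))\ge\beta(W_t\psi_1)+\alpha(W_t\psi_2)$; swapping $\psi_1\leftrightarrow\psi_2$ then yields the remaining two asymmetric bounds, exactly as in the paper. A further small slip: your justification of the regularity of $W_t(\psi_2^{-1})$ via $W_t(\psi_2^{-1})(x)\le W_t(\psi_1\psi_2)(x)W_t\psi_1(x)$ rests on a wrong factorization (the product $(\psi_1\psi_2)\psi_1$ equals $\psi_1^2\psi_2$, not $\psi_2^{-1}$); this regularity is stated outright in Lemma~\ref{le:indices-relations}, which you are citing anyway. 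Once these directions are corrected the argument is complete and equivalent to the paper's.
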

\begin{proof}
Let $R\in(0,d_t]$ and $x\in(0,1]$. Then
\[
\frac{\displaystyle\max_{\tau\in\Gamma:|\tau-t|=xR}(\psi_1(\tau)\psi_2(\tau))}
{\displaystyle\min_{\tau\in\Gamma:|\tau-t|=R}(\psi_1(\tau)\psi_2(\tau))}
\le
\frac{\displaystyle\max_{\tau\in\Gamma:|\tau-t|=xR}\psi_1(\tau)}
{\displaystyle\min_{\tau\in\Gamma:|\tau-t|=R}\psi_1(\tau)}
\cdot
\frac{\displaystyle\max_{\tau\in\Gamma:|\tau-t|=xR}\psi_2(\tau)}
{\displaystyle\min_{\tau\in\Gamma:|\tau-t|=R}\psi_2(\tau)}.
\]
Taking the supremum over all $R\in(0,d_t]$, we obtain
\begin{equation}\label{eq:WW-estimates-1}
\big(W_t(\psi_1\psi_2)\big)(x)\le(W_t\psi_1)(x)(W_t\psi_2)(x)
\end{equation}
for all $x\in(0,1]$. Analogously it can be shown that this inequality
holds for $x\in(1,\infty)$. In particular, this implies that the
function $W_t(\psi_1\psi_2)$ is regular. Further, taking the logarithms
of both sides of (\ref{eq:WW-estimates-1}), dividing by $\log x$, and then
passing to the limits as $x\to 0$ and $x\to\infty$, we obtain
\begin{equation}\label{eq:WW-estimates-2}
\alpha(W_t\psi_1)+\alpha(W_t\psi_2)\le\alpha\big(W_t(\psi_1\psi_2)\big),
\quad
\beta\big(W_t(\psi_1\psi_2)\big)\le\beta(W_t\psi_1)+\beta(W_t\psi_2),
\end{equation}
respectively. Notice that the passage to the limits is justified by
Theorem~\ref{th:indices}.

Similarly,
\begin{eqnarray*}
&&
\frac{\displaystyle\max_{\tau\in\Gamma:|\tau-t|=xR}(\psi_1(\tau)\psi_2(\tau))}
{\displaystyle\min_{\tau\in\Gamma:|\tau-t|=R}(\psi_1(\tau)\psi_2(\tau))}
\ge
\frac{\displaystyle\min_{\tau\in\Gamma:|\tau-t|=xR}\psi_1(\tau)}
{\displaystyle\max_{\tau\in\Gamma:|\tau-t|=R}\psi_1(\tau)}
\cdot
\frac{\displaystyle\max_{\tau\in\Gamma:|\tau-t|=xR}\psi_2(\tau)}
{\displaystyle\min_{\tau\in\Gamma:|\tau-t|=R}\psi_2(\tau)}
\\
&&\ge
\left(\inf_{R\in(0,d_t]}
\frac{\displaystyle\min_{\tau\in\Gamma:|\tau-t|=xR}\psi_1(\tau)}
{\displaystyle\max_{\tau\in\Gamma:|\tau-t|=R}\psi_1(\tau)}
\right)
\cdot
\frac{\displaystyle\max_{\tau\in\Gamma:|\tau-t|=xR}\psi_2(\tau)}
{\displaystyle\min_{\tau\in\Gamma:|\tau-t|=R}\psi_2(\tau)}
\\
&&=
\left(\sup_{R\in(0,d_t]}
\frac{\displaystyle\max_{\tau\in\Gamma:|\tau-t|=R}\psi_1(\tau)}
{\displaystyle\min_{\tau\in\Gamma:|\tau-t|=xR}\psi_1(\tau)}
\right)^{-1}
\cdot
\frac{\displaystyle\max_{\tau\in\Gamma:|\tau-t|=xR}\psi_2(\tau)}
{\displaystyle\min_{\tau\in\Gamma:|\tau-t|=R}\psi_2(\tau)}
\\
&&=
\frac{1}{(W_t\psi_1)(x^{-1})}
\cdot
\frac{\displaystyle\max_{\tau\in\Gamma:|\tau-t|=xR}\psi_2(\tau)}
{\displaystyle\min_{\tau\in\Gamma:|\tau-t|=R}\psi_2(\tau)}.
\end{eqnarray*}
Taking the supremum over all $R\in(0,d_t]$, we obtain
\begin{equation}\label{eq:WW-estimates-3}
\big(W_t(\psi_1\psi_2)\big)(x)\ge\frac{(W_t\psi_2)(x)}{(W_t\psi_1)(x^{-1})}
\end{equation}
for $x\in(0,1]$. Then
\[
\frac{\log\big(W_t(\psi_1\psi_2)\big)(x)}{\log x}
\le
\frac{\log(W_t\psi_2)(x)}{\log x}
+
\frac{\log (W_t\psi_1)(x^{-1})}{\log(x^{-1})}.
\]
Passing to the limit as $x\to 0$, we obtain
\begin{equation}\label{eq:WW-estimates-4}
\alpha\big(W_t(\psi_1\psi_2)\big)\le\alpha(W_t\psi_2)+\beta(W_t\psi_1).
\end{equation}
In the same way it can be shown that (\ref{eq:WW-estimates-3}) is also fulfilled
for $x\in(1,\infty)$. This implies that
\begin{equation}\label{eq:WW-estimates-5}
\beta\big(W_t(\psi_1\psi_2)\big)\ge\beta(W_t\psi_2)+\alpha(W_t\psi_1).
\end{equation}
Replacing $\psi_1$ by $\psi_2$ and vice versa, we also get
\begin{equation}\label{eq:WW-estimates-6}
\alpha\big(W_t(\psi_1\psi_2)\big)\le\alpha(W_t\psi_1)+\beta(W_t\psi_2),
\quad
\beta\big(W_t(\psi_1\psi_2)\big)\ge\beta(W_t\psi_1)+\alpha(W_t\psi_2).
\end{equation}
Combining inequalities (\ref{eq:WW-estimates-2}) and
(\ref{eq:WW-estimates-4})--(\ref{eq:WW-estimates-6}) we arrive at
the statement of the lemma for $W_t$. The statement for $W_t^0$
follows from the statement for $W_t$ and \cite[Lemma~1.16]{BK97}.
\end{proof}
From \cite[Theorem~3.3(c)]{BK97} and \cite[Lemma~3.16]{BK97} we get the following.
\begin{lemma}\label{le:WV-relations}
Let $\Gamma$ be a simple Carleson curve and $t\in\Gamma$. If
$\psi:\Gamma\setminus\{t\}\to(0,\infty)$ is a continuous function
such that $W_t\psi$ is regular, then the functions $W_t^0\psi$ and
$V_t^0\psi$ are regular and submultiplicative and
\[
\alpha(W_t^0\psi)=\alpha(V_t^0\psi),
\quad
\beta(W_t^0\psi)=\beta(V_t^0\psi).
\]
\end{lemma}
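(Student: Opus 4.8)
The plan is to compare $W_t^0\psi$ and $V_t^0\psi$ by routing through the intermediate function $V_t\psi$, using that on a Carleson curve the $W_t$- and $V_t$-indices of a weight coincide. First, since $W_t\psi$ is regular, \cite[Lemmas~1.15 and~1.16]{BK97} (recalled just before Theorem~\ref{th:boundedness}) already show that $W_t^0\psi$ is regular and submultiplicative and that $\alpha(W_t\psi)=\alpha(W_t^0\psi)$ and $\beta(W_t\psi)=\beta(W_t^0\psi)$. It therefore remains to establish that $V_t^0\psi$ is regular and submultiplicative with $\alpha(V_t^0\psi)=\alpha(W_t\psi)$ and $\beta(V_t^0\psi)=\beta(W_t\psi)$.

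The next step is the passage from $W_t$ to $V_t$. Regularity of $W_t\psi$ (which, $W_t\psi$ being submultiplicative, already entails $(W_t\psi)(x)<\infty$ for every $x\in(0,\infty)$) bounds the oscillation of $\psi$ over $\Gamma(t,R)$ at all scales and in particular gives $\log\psi\in L^1(\Gamma(t,R))$ for every $R\in(0,d_t]$, and indeed $\log\psi\in BMO(\Gamma,t)$. This is precisely the setting of \cite[Theorem~3.3]{BK97}, whose part~(c) asserts that, for a simple Carleson curve $\Gamma$ with $W_t\psi$ regular, the function $V_t\psi$ is regular and submultiplicative and
\[
\alpha(W_t\psi)=\alpha(V_t\psi),\qquad \beta(W_t\psi)=\beta(V_t\psi).
\]
With $\log\psi\in BMO(\Gamma,t)$ in hand, Theorem~\ref{th:BK-V-regularity} --- or, more directly, \cite[Lemma~3.16]{BK97}, which governs the passage from $V_t$ to $V_t^0$ --- gives that $V_t^0\psi$ is regular and submultiplicative and that $\alpha(V_t\psi)=\alpha(V_t^0\psi)$ and $\beta(V_t\psi)=\beta(V_t^0\psi)$.

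Concatenating the three pairs of identities yields
\[
\alpha(W_t^0\psi)=\alpha(W_t\psi)=\alpha(V_t\psi)=\alpha(V_t^0\psi),
\]
and likewise for $\beta$, while the regularity and submultiplicativity of $W_t^0\psi$ and $V_t^0\psi$ have been produced along the way; this is the assertion. I expect the only real content to lie in the middle step --- that regularity of $W_t\psi$ forces $\log\psi\in BMO(\Gamma,t)$ and that the $W_t$- and $V_t$-indices then agree --- which is exactly where the Carleson (Ahlfors-David) condition enters, ultimately through the fact that $\log|\tau-t|$ belongs to $BMO(\Gamma,t)$ on such a curve; everything else is routine bookkeeping with submultiplicative functions and their indices as limits (Theorem~\ref{th:indices}).
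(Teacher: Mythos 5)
Your proposal is correct and follows essentially the same route as the paper, which gives no proof at all but simply derives the lemma "from \cite[Theorem~3.3(c)]{BK97} and \cite[Lemma~3.16]{BK97}" --- precisely the two results you invoke for the middle and final steps of your chain. Your additional remarks (the $W_t\to W_t^0$ passage via \cite[Lemmas~1.15 and~1.16]{BK97}, and the observation that regularity of $W_t\psi$ on a Carleson curve forces $\log\psi\in BMO(\Gamma,t)$) correctly identify where the cited results get their hypotheses, so nothing is missing.
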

The next statement is taken from \cite[Lemma~3.17]{BK97}.
\begin{lemma}\label{le:WV-estimates}
Let $\Gamma$ be a simple Carleson curve and $t\in\Gamma$. Suppose
$\psi:\Gamma\setminus\{t\}\to(0,\infty)$ is a continuous function
such that the function $W_t\psi$ is regular and
$w:\Gamma\to[0,\infty]$ is a weight such that $\log w\in
BMO(\Gamma,t)$. Then the function $V_t^0(\psi w)$ is regular and
submultiplicative and
\[
\alpha(V_t^0w)+\alpha(W_t\psi)
\le
\alpha\big(V_t^0(\psi w)\big)
\le
\min\big\{\alpha(V_t^0w)+\beta(W_t\psi),\beta(V_t^0w)+\alpha(W_t\psi)\big\},
\]
\[
\beta(V_t^0w)+\alpha(W_t\psi)
\ge
\beta\big(V_t^0(\psi w)\big)
\ge
\max\big\{\alpha(V_t^0w)+\beta(W_t\psi),\beta(V_t^0w)+\alpha(W_t\psi)\big\}.
\]
\end{lemma}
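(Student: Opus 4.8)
The plan is to mimic the proof of Lemma~\ref{le:WW-estimates}, replacing one of the two factors $W_t$ by $V_t^0$, and to exploit Lemma~\ref{le:WV-relations} to pass freely between $W_t$, $W_t^0$, and $V_t^0$ when convenient. First I would record that $\log w\in BMO(\Gamma,t)$ already guarantees, by Theorem~\ref{th:BK-V-regularity}, that $V_tw$ and $V_t^0w$ are regular and submultiplicative; and $W_t\psi$ regular gives, by Lemma~\ref{le:WV-relations}, that $W_t^0\psi$ and $V_t^0\psi$ are regular and submultiplicative with matching indices. Since the product $\psi w$ is again of the form (a continuous positive function on $\Gamma\setminus\{t\}$) times (a weight with $\log$ in $BMO(\Gamma,t)$)—indeed $\log(\psi w)=\log\psi+\log w$, and $\log\psi$ is locally bounded near $t$, hence lies in $BMO(\Gamma,t)$, so $\log(\psi w)\in BMO(\Gamma,t)$—Theorem~\ref{th:BK-V-regularity} also applies to $\psi w$ once we know $V_t^0(\psi w)$ is regular; regularity of the latter will drop out of the multiplicative estimate, exactly as in Lemma~\ref{le:WW-estimates}.

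Next I would prove the two pointwise (in $x$) multiplicative inequalities that are the analogues of (\ref{eq:WW-estimates-1}) and (\ref{eq:WW-estimates-3}). The key elementary observation is that for $R_1,R_2\in(0,d_t]$ one has
\[
H_{\psi w,t}(R_1,R_2)=H_{w,t}(R_1,R_2)\cdot
\frac{\exp\!\left(\frac{1}{|\Gamma(t,R_1)|}\int_{\Gamma(t,R_1)}\log\psi\,|d\tau|\right)}
{\exp\!\left(\frac{1}{|\Gamma(t,R_2)|}\int_{\Gamma(t,R_2)}\log\psi\,|d\tau|\right)},
\]
and the second factor is squeezed between $\min_{|\tau-t|=R_1}\psi(\tau)/\max_{|\tau-t|=R_2}\psi(\tau)$ and $\max_{|\tau-t|=R_1}\psi(\tau)/\min_{|\tau-t|=R_2}\psi(\tau)$ because averaging $\log\psi$ over a level-set-indexed region of $\Gamma(t,R)$ stays between the min and the max of $\psi$ on the relevant circles. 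Taking $R_1=xR$, $R_2=R$ (or the reciprocal arrangement for $x\ge 1$) and then the supremum over $R\in(0,d_t]$ yields
\[
\big(V_t^0(\psi w)\big)(x)\le (V_t^0w)(x)\,(W_t\psi)(x)
\]
and, by the same lower-bounding trick used to get (\ref{eq:WW-estimates-3}),
\[
\big(V_t^0(\psi w)\big)(x)\ge \frac{(V_t^0w)(x)}{(W_t\psi)(x^{-1})},
\qquad
\big(V_t^0(\psi w)\big)(x)\ge \frac{(W_t\psi)(x)}{(V_t^0w)(x^{-1})}
\]
for $x\in(0,1]$, with the mirror inequalities on $(1,\infty)$. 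The upper bound already shows $V_t^0(\psi w)$ is regular (since $V_t^0w$ and $W_t\psi$ are), hence submultiplicative by Theorem~\ref{th:BK-V-regularity}.

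Finally I would take logarithms, divide by $\log x$, and pass to the limits $x\to 0^+$ and $x\to\infty$, exactly as in Lemma~\ref{le:WW-estimates}; Theorem~\ref{th:indices} legitimizes these passages because every function involved is regular and submultiplicative. The upper bound gives $\alpha(V_t^0(\psi w))\le\alpha(V_t^0w)+\alpha(W_t\psi)$ reversed—wait, more carefully: from $(V_t^0(\psi w))(x)\le (V_t^0w)(x)(W_t\psi)(x)$ and the lower bounds one extracts, after taking $x\to 0$, the chain $\alpha(V_t^0w)+\alpha(W_t\psi)\le\alpha(V_t^0(\psi w))$ and $\alpha(V_t^0(\psi w))\le\min\{\alpha(V_t^0w)+\beta(W_t\psi),\beta(V_t^0w)+\alpha(W_t\psi)\}$, and dually for $\beta$ by letting $x\to\infty$; one uses $\alpha(W_t\psi)=\alpha(W_t^0\psi)$ etc. from Lemma~\ref{le:WV-relations} only if one prefers to state things with $W_t^0$. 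I expect the only mild obstacle to be the bookkeeping in the level-set min/max bounds for the averaged $\log\psi$ term—making sure the suprema over $R$ interact correctly with the two different arrangements for $x\le 1$ and $x\ge 1$—but this is precisely the computation already carried out in the proof of Lemma~\ref{le:WW-estimates} and in \cite[Lemma~3.17]{BK97}, so no new idea is needed.
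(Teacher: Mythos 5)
The paper itself gives no proof of this lemma; it is quoted verbatim from \cite[Lemma~3.17]{BK97}. Your overall strategy --- exact multiplicativity $H_{\psi w,t}=H_{w,t}\,H_{\psi,t}$, pointwise upper and lower bounds, then passage to indices via Theorem~\ref{th:indices} --- is the right one and is essentially that of the cited source. But your key estimate is not justified and is in fact false as stated. You claim that
\[
\exp\big(\Delta_t(\log\psi,R_1)-\Delta_t(\log\psi,R_2)\big)
\le
\max_{\tau\in\Gamma:|\tau-t|=R_1}\psi(\tau)\Big/\min_{\tau\in\Gamma:|\tau-t|=R_2}\psi(\tau),
\]
``because averaging $\log\psi$ stays between the min and the max of $\psi$ on the relevant circles.'' It does not: $\Delta_t(\log\psi,R_1)$ is the mean of $\log\psi$ over the whole portion $\Gamma(t,R_1)=\{\tau:|\tau-t|<R_1\}$, so it is only trapped between $\inf_{0<r<R_1}\min_{|\tau-t|=r}\log\psi$ and $\sup_{0<r<R_1}\max_{|\tau-t|=r}\log\psi$, which see \emph{all} radii below $R_1$, not the single circle $|\tau-t|=R_1$. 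Concretely, on $\Gamma=[0,1]$ with $t=0$, take $\psi\equiv e^{M}$ on $(0,R_1/2]$, $\psi\equiv 1$ on $[R_1,1]$, interpolated log-linearly in between: $W_0\psi$ is regular, $\Delta_0(\log\psi,R_1)\ge M/2$, yet $\max_{|\tau|=R_1}\psi=1$, so with $R_2=1$ your inequality fails by a factor of roughly $e^{M/2}$. Consequently the pointwise bound $(V_t^0(\psi w))(x)\le(V_t^0w)(x)(W_t\psi)(x)$ and its companions are not established; no pointwise comparison of $V_t^0\psi$ with $W_t\psi$ holds in general.

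The repair is what \cite[Lemma~3.17]{BK97} actually does: keep $V_t\psi$ throughout. Since $H_{\psi,t}(R_1,R_2)=1/H_{\psi,t}(R_2,R_1)$, one gets for $x\in(0,1]$
\[
\frac{(V_t^0w)(x)}{(V_t\psi)(x^{-1})}
\le
\big(V_t^0(\psi w)\big)(x)
\le
(V_t^0w)(x)\,(V_t\psi)(x),
\]
together with the analogous bounds with the roles of $w$ and $\psi$ exchanged and for $x\in[1,\infty)$. This gives regularity of $V_t^0(\psi w)$ and, after taking logarithms, dividing by $\log x$, and letting $x\to 0$ or $x\to\infty$, the stated inequalities with $\alpha(V_t^0\psi),\beta(V_t^0\psi)$ in place of $\alpha(W_t\psi),\beta(W_t\psi)$. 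Only at that point does Lemma~\ref{le:WV-relations} convert $V$-indices of $\psi$ into $W$-indices. That lemma is therefore not the optional cosmetic step you make it out to be; it is the essential bridge, and it is also where the Carleson hypothesis enters --- your argument never uses that hypothesis, which is itself a warning sign. A second, smaller slip: $\log\psi$ is generally \emph{not} locally bounded near $t$ (consider $\psi(\tau)=|\tau-t|^{\lambda}$), so your justification that $\log(\psi w)\in BMO(\Gamma,t)$ must instead appeal to the fact that regularity of $W_t\psi$ on a Carleson curve forces $\log\psi\in BMO(\Gamma,t)$ (cf.\ \cite[Theorem~3.3]{BK97} and Theorem~\ref{th:BK-V-regularity}).
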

\subsection{Estimates of weights with one singularity by power
weights} Fix $t_0\in\Gamma$. Let $\omega(t_0,\delta)$ denote the
open arc on $\Gamma$ which contains $t_0$ and whose endpoints lie
on the circle
\[
\{\tau\in\C:|\tau-t_0|=\delta\}.
\]
It is clear that $\omega(t_0,\delta)\subset\Gamma(t_0,\delta)$,
however, it may happen that
$\omega(t_0,\delta)\ne\Gamma(t_0,\delta)$. The following lemma was
obtained in \cite[Lemma~3.2]{Karlovich10-MN}.
\begin{lemma}\label{le:estimate}
Let $\Gamma$ be a simple Carleson curve and $t_0\in\Gamma$.
Suppose $\psi:\Gamma\setminus\{t_0\}\to(0,\infty)$ is a continuous
function and $W_{t_0}\psi$ is regular. Let $\eps>0$ and $\delta$
be such that $0<\delta<d_{t_0}$. Then there exist positive
constants $C_j=C_j(\eps,\delta,w)$, where $j=1,2$, such that
\begin{equation}\label{eq:estimate-1}
\frac{\psi(t)}{\psi(\tau)} \le
C_1\left|\frac{t-t_0}{\tau-t_0}\right|^{\beta(W_{t_0}\psi)+\eps}
\end{equation}
for all $t\in\Gamma\setminus\omega(t_0,\delta)$ and all
$\tau\in\omega(t_0,\delta)$; and
\begin{equation}\label{eq:estimate-2}
\frac{\psi(t)}{\psi(\tau)} \le
C_2\left|\frac{t-t_0}{\tau-t_0}\right|^{\alpha(W_{t_0}\psi)-\eps}
\end{equation}
for all $t\in\omega(t_0,\delta)$ and all
$\tau\in\Gamma\setminus\omega(t_0,\delta)$.
\end{lemma}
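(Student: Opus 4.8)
The plan is to isolate, as the core of the argument, two global pointwise estimates of $\psi$ by power weights,
\[
a_1\,|\sigma-t_0|^{\beta(W_{t_0}\psi)+\eps}\le\psi(\sigma)\le a_2\,|\sigma-t_0|^{\alpha(W_{t_0}\psi)-\eps}
\qquad(\sigma\in\Gamma\setminus\{t_0\}),
\]
with suitable constants $a_1,a_2>0$, and then to read off both (\ref{eq:estimate-1}) and (\ref{eq:estimate-2}) from these by a short computation absorbing the mismatch between the two exponents. Throughout, write $\alpha:=\alpha(W_{t_0}\psi)$ and $\beta:=\beta(W_{t_0}\psi)$. Since $W_{t_0}\psi$ is always submultiplicative and is regular by hypothesis, Theorem~\ref{th:indices} applies: $-\infty<\alpha\le\beta<\infty$, and $\alpha=\lim_{x\to0}\log(W_{t_0}\psi)(x)/\log x$, $\beta=\lim_{x\to\infty}\log(W_{t_0}\psi)(x)/\log x$. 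I will also use that a regular submultiplicative function is bounded on every compact subinterval of $(0,\infty)$.

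First I would establish the one-sided power bounds
\[
(W_{t_0}\psi)(x)\le A\,x^{\alpha-\eps}\quad(x\in(0,1]),\qquad
(W_{t_0}\psi)(x)\le B\,x^{\beta+\eps}\quad(x\in[1,\infty))
\]
for some $A,B>0$. Near $x=0$ and near $x=\infty$ the limit formulas give these with constant $1$ (one must watch the sign of $\log x$: for $x<1$ it is negative, which is why $x^{\alpha-\eps}$, not $x^{\alpha+\eps}$, is the correct upper envelope); on the remaining compact intervals $[x_0,1]$ and $[1,x_1]$ one uses boundedness of $W_{t_0}\psi$ together with positivity of the power functions there. Next I would feed these into the definition of $W_{t_0}\psi$ specialized to the single radius $R=d_{t_0}$. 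For $x\in[1,\infty)$ this bounds $\big(\max_{|\sigma-t_0|=d_{t_0}}\psi\big)/\big(\min_{|\sigma-t_0|=d_{t_0}/x}\psi\big)$ from above by $(W_{t_0}\psi)(x)$, and after the substitution $r=d_{t_0}/x$ this yields $\psi(\sigma)\ge a_1|\sigma-t_0|^{\beta+\eps}$ for every $\sigma\in\Gamma\setminus\{t_0\}$; symmetrically, the branch $x\in(0,1]$ with $R=d_{t_0}$ produces $\psi(\sigma)\le a_2|\sigma-t_0|^{\alpha-\eps}$. Here I use that $\Gamma$ is connected, so every circle $\{|\sigma-t_0|=r\}$ with $0<r\le d_{t_0}$ meets $\Gamma$, and that $\psi$ is continuous and positive on $\Gamma\setminus\{t_0\}$, so the maxima and minima above are attained and strictly positive.

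To finish, put $\rho:=\mathrm{dist}\big(t_0,\Gamma\setminus\omega(t_0,\delta)\big)$. Since $\omega(t_0,\delta)$ is open in $\Gamma$ and contains $t_0$, the set $\Gamma\setminus\omega(t_0,\delta)$ is compact and does not contain $t_0$, hence $\rho>0$; also $|\sigma-t_0|\le d_{t_0}$ for all $\sigma\in\Gamma$. Now for $t\in\Gamma\setminus\omega(t_0,\delta)$ and $\tau\in\omega(t_0,\delta)$, inserting the upper bound for $\psi(t)$ and the lower bound for $\psi(\tau)$ gives
\[
\frac{\psi(t)}{\psi(\tau)}\le\frac{a_2}{a_1}\cdot\frac{|t-t_0|^{\alpha-\eps}}{|\tau-t_0|^{\beta+\eps}}
=\frac{a_2}{a_1}\,|t-t_0|^{(\alpha-\beta)-2\eps}\left|\frac{t-t_0}{\tau-t_0}\right|^{\beta+\eps},
\]
and since $|t-t_0|\in[\rho,d_{t_0}]$ the factor $|t-t_0|^{(\alpha-\beta)-2\eps}$ is bounded above by a constant depending only on $\eps,\delta,\psi$; this is (\ref{eq:estimate-1}). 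For $t\in\omega(t_0,\delta)$ and $\tau\in\Gamma\setminus\omega(t_0,\delta)$ the same substitution, this time grouping the powers around $\alpha-\eps$ and using $|\tau-t_0|\in[\rho,d_{t_0}]$, gives (\ref{eq:estimate-2}). The one genuinely delicate point is the exponent bookkeeping in the second step — the directions of the inequalities and the signs $\alpha-\eps$, $\beta+\eps$; once the two power envelopes for $\psi$ are in hand, the remainder is routine, and no further use of the Carleson condition enters beyond what is already encoded in the regularity of $W_{t_0}\psi$.
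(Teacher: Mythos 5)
Your argument is correct. Note that the paper itself gives no proof of this lemma: it is quoted from \cite[Lemma~3.2]{Karlovich10-MN}, so there is nothing internal to compare against line by line. Your route is the natural one and, as far as I can check, every step closes. The key reduction --- first extracting the two global power envelopes $a_1|\sigma-t_0|^{\beta+\eps}\le\psi(\sigma)\le a_2|\sigma-t_0|^{\alpha-\eps}$ by specializing the supremum in the definition of $W_{t_0}\psi$ to the single radius $R=d_{t_0}$, after bounding $W_{t_0}\psi$ above by $Ax^{\alpha-\eps}$ on $(0,1]$ and by $Bx^{\beta+\eps}$ on $[1,\infty)$ via Theorem~\ref{th:indices} and boundedness of a regular submultiplicative function on compacta --- is sound, and you correctly handle the sign reversal coming from $\log x<0$ on $(0,1)$. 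The final bookkeeping also works: in both {\rm(\ref{eq:estimate-1})} and {\rm(\ref{eq:estimate-2})} the leftover factor has exponent $(\alpha-\beta)-2\eps<0$ and is attached to the variable ($t$ or $\tau$) that stays in $\Gamma\setminus\omega(t_0,\delta)$, where $|\,\cdot\,-t_0|\ge\rho>0$ by compactness, so it is absorbed into the constant. The usual proof in the literature (in the style of \cite[Section~1.5]{BK97}) instead estimates the quotient $\psi(t)/\psi(\tau)$ directly by evaluating $W_{t_0}\psi$ at ratios of radii through an intermediate point on the circle $|\sigma-t_0|=\delta$; your version is no harder and has the bonus that the global two-sided power bound you establish is precisely the statement the paper later extracts from this lemma in the proof of Lemma~\ref{le:ersatz}, so nothing is lost. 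One cosmetic remark: the constants in the statement are written $C_j(\eps,\delta,w)$, but they of course depend on $\psi$ (and on $\Gamma$, $t_0$), exactly as yours do.
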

\section{The boundedness of the maximal operator on weighted Nakano spaces}
\label{sec:Maximal_operator}
\subsection{Muckenhoupt weights on Carleson curves}
For a function $f\in L^1(\Gamma)$ the maximal function $Mf$ of $f$ on
$\Gamma$ is defined by
\[
(Mf)(t):=\sup_{R>0}\frac{1}{|\Gamma(t,R)|}\int_{\Gamma(t,R)}|f(\tau)|\,|d\tau|
\quad (t\in\Gamma).
\]
The map $M:f\mapsto Mf$ is referred to as the \textit{maximal
operator}.

The boundedness of the operators $M$ and $S$ on standard weighted
Lebesgue spaces is well understood (see e.g.
\cite{BK97,Dynkin87,GGKK98,KKP98,Stein93}).
\begin{theorem}[well-known]
Suppose $\Gamma$ is a simple Carleson curve. If $T$ is one of the
operators $M$ or $S$ and $1<p<\infty$, then $T$ is bounded on
$L^p(\Gamma,w)$ if and only if $w$ is a Muckenhoupt weight, $w\in
A_p(\Gamma)$, that is,
\[
\sup_{t\in\Gamma}\sup_{R>0}
\left(\frac{1}{R}\int_{\Gamma(t,R)}w^p(\tau)\,|d\tau|\right)^{1/p}
\left(\frac{1}{R}\int_{\Gamma(t,R)}w^{-q}(\tau)\,|d\tau|\right)^{1/q}<\infty
\]
where $1/p+1/q=1$.
\end{theorem}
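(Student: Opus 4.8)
The plan is to reduce both equivalences to classical weighted norm inequalities on spaces of homogeneous type. The starting observation is that a simple Carleson curve $\Gamma$, equipped with arc-length measure $|d\tau|$ and the metric inherited from $\C$, is Ahlfors--David regular of dimension one: the Carleson condition gives $|\Gamma(t,R)|\le cR$, while the connectedness of $\Gamma$ forces the matching lower bound $|\Gamma(t,R)|\ge R$ for $0<R\le d_t$, since the subarc issuing from $t$ until it first meets the circle $|\tau-t|=R$ already has length at least $R$. Hence $|d\tau|$ is doubling with respect to the balls $\Gamma(t,R)$, so $(\Gamma,|d\tau|)$ is a space of homogeneous type in the sense of Coifman and Weiss, and the Muckenhoupt class intrinsically attached to the maximal operator on this space is exactly $A_p(\Gamma)$.

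For $T=M$ I would prove necessity by testing. Since $(M(\chi_{\Gamma(t,R)}w^{-q}))(s)\ge|\Gamma(t,R)|^{-1}\int_{\Gamma(t,R)}w^{-q}\,|d\tau|$ for every $s\in\Gamma(t,R)$, substituting $f=\chi_{\Gamma(t,R)}w^{-q}$ into $\|Mf\|_{L^p(\Gamma,w)}\le C\|f\|_{L^p(\Gamma,w)}$ and rearranging (using $w^{-q}w=w^{1-q}$ and $(1-q)p=-q$) yields the $A_p(\Gamma)$ inequality for the single ball $\Gamma(t,R)$, so the supremum over $t$ and $R$ gives $w\in A_p(\Gamma)$. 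For sufficiency I would invoke the standard homogeneous-space machinery: an $A_p(\Gamma)$ weight satisfies a reverse H\"older inequality, hence lies in $A_{p-\eps}(\Gamma)$ for some $\eps>0$, and a Calder\'on--Zygmund decomposition of $Mf$ adapted to the doubling measure $|d\tau|$, combined with interpolation with change of measure (or a good-$\lambda$ estimate), produces $\|Mf\|_{L^p(\Gamma,w)}\le C\|f\|_{L^p(\Gamma,w)}$; see \cite{GGKK98} and \cite[Chap.~2--5]{BK97}.

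For $T=S$ the sufficiency rests on two facts to be combined. First, David's theorem \cite{David84} gives boundedness of $S$ on $L^2(\Gamma)$ for every Carleson curve. Second, since $\Gamma$ is Ahlfors--David regular, $|\tau-t|$ is comparable to $|\Gamma(t,|\tau-t|)|$, so the Cauchy kernel $1/(\pi i(\tau-t))$ obeys the size estimate $|\tau-t|^{-1}\le C|\Gamma(t,|\tau-t|)|^{-1}$ and the H\"ormander smoothness estimate $|(\tau-t)^{-1}-(\tau-t')^{-1}|\le C|t-t'|\,|\tau-t|^{-2}$ for $|\tau-t|\ge 2|t-t'|$; thus $S$ is a Calder\'on--Zygmund operator on the space of homogeneous type $\Gamma$, and the Coifman--Fefferman good-$\lambda$ inequality comparing the maximal Cauchy truncations with $Mf$, together with $w\in A_p(\Gamma)$ and the boundedness of $M$ already obtained, gives $\|Sf\|_{L^p(\Gamma,w)}\le C\|f\|_{L^p(\Gamma,w)}$. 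Necessity I would prove locally: fixing $t\in\Gamma$, $R\le d_t$, and splitting the arc $\Gamma(t,R)$ into two subarcs of comparable length separated along $\Gamma$, Ahlfors--David regularity allows one to extract, for a suitable choice of the two subarcs, a lower bound $|(Sg)(s)|\ge c\,|\Gamma(t,R)|^{-1}\int g\,|d\tau|$ valid for all nonnegative $g$ supported on one subarc and all $s$ in the other; feeding $g$ built from $w^{-q}$, respectively $w^p$, into the boundedness of $S$ then reproduces the two halves of the $A_p(\Gamma)$ inequality. The worked-out version is in \cite[Chap.~4--5]{BK97}, and for Lyapunov curves with power weights in \cite[Chap.~1]{GK92}.

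The step I expect to be the main obstacle is the necessity direction for $S$: one must upgrade the pointwise lower bound for the Cauchy integral to a lower bound for $\|Sg\|_{L^p(\Gamma,w)}$ that is uniform over all balls $\Gamma(t,R)$ and all admissible weights, which needs quantitative control of how an arc of length $\sim R$ sits inside the disc of radius $R$ --- the full Ahlfors--David geometry of $\Gamma$ rather than the bare Carleson bound. The sufficiency direction is essentially formal once David's $L^2$-theorem is in hand, its only substantive ingredient being the transfer from unweighted $L^2$ to weighted $L^p$ through the standard Calder\'on--Zygmund theory on spaces of homogeneous type.
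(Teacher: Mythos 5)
The paper itself offers no proof of this statement: it is labelled ``well-known'' and delegated to the references \cite{BK97,Dynkin87,GGKK98,KKP98,Stein93}, so the only meaningful comparison is with the arguments in those sources. Your overall architecture --- lower Ahlfors regularity of $\Gamma$ from connectedness, the space-of-homogeneous-type framework, the testing argument for the necessity of $A_p(\Gamma)$ for $M$, and David's $L^2$ theorem combined with Calder\'on--Zygmund/good-$\lambda$ transference for the sufficiency for $S$ --- is exactly the standard route taken there, and those parts of your sketch are sound.

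The genuine gap is the necessity direction for $S$, and it is not merely the quantitative issue you flag. The lemma you propose --- a lower bound $|(Sg)(s)|\ge c\,|\Gamma(t,R)|^{-1}\int g\,|d\tau|$ valid for \emph{all} nonnegative $g$ supported on one subarc and all $s$ in the other --- is false for general Carleson curves. The obstruction is not how the arc sits inside the disc but the complex line element: writing $d\tau=\tau'(\sigma)\,d\sigma$ in arc-length parametrization, the unit tangent $\tau'$ of a Carleson (even Lipschitz, e.g.\ sawtooth) curve is only measurable and can alternate between nearly opposite directions on sets of positive measure inside any subarc. Since $1/(\tau-s)$ is essentially constant when the subarc carrying $g$ is small and well separated from $s$, the integral $\int g(\tau)(\tau-s)^{-1}\,d\tau\approx c_0\int g\,\tau'\,d\sigma$ can be made to vanish for suitable $g\ge 0$; what survives is only a bound of the form $|(Sg)(s)|\ge c R^{-1}\left|\int g\,d\tau\right|$ with the \emph{complex} differential, and one cannot in general choose test functions built from $w^{-q}$ or $w^{p}$ so that simultaneously $\left|\int g\,d\tau\right|\ge c\int g\,|d\tau|$ and $\int g\,|d\tau|$ captures a fixed proportion of the corresponding integral over $\Gamma(t,R)$. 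This is precisely why the necessity proof in \cite[Chap.~4]{BK97} is long and does not proceed by such pointwise kernel positivity; your sketch of this step would have to be replaced by that argument (or restricted to curves with controlled tangent direction, such as the Lyapunov case treated in \cite{GK92}, where your reasoning does go through).
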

We now consider weights $\psi$ which are continuous and nonzero on
$\Gamma$ minus a point $t$. If the function $W_t\psi$ is regular,
then its indices are well defined. The following theorem is due to
B\"ottcher and Yu. Karlovich \cite[Theorem~2.33]{BK97}. It
provides us with a very useful tool for checking the Muckenhoupt
condition once the indices of $W_t\psi$ are available.
\begin{theorem}[B\"ottcher, Yu.~Karlovich]
\label{th:BK}
Let $1<p<\infty$ and $\Gamma$ be a simple Carleson curve and
$t\in\Gamma$. Suppose $\psi:\Gamma\setminus\{t\}\to(0,\infty)$ is
a continuous function and $W_t\psi$ is regular. Then $\psi\in
A_p(\Gamma)$ if and only if
\[
0<1/p+\alpha(W_t^0\psi),\quad 1/p+\beta(W_t^0\psi)<1.
\]
\end{theorem}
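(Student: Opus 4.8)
The plan is to reduce the Muckenhoupt condition $\psi\in A_p(\Gamma)$ to a statement about the indices of $W_t^0\psi$ by first replacing $\psi$ with a power weight near $t$. The starting observation is that $\psi$ is continuous and bounded away from $0$ and $\infty$ on $\Gamma\setminus\omega(t,\delta)$ for any fixed $\delta\in(0,d_t)$; since every curve of finite length trivially satisfies the $A_p$ condition for weights bounded above and below, the $A_p(\Gamma)$ membership of $\psi$ is governed entirely by the behaviour of $\psi$ on the small arc $\omega(t,\delta)$. Thus it suffices to compare $\psi$ locally with the power weight $\varphi_{t,\lambda}(\tau)=|\tau-t|^\lambda$.

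The key technical input is Lemma~\ref{le:estimate}: for every $\eps>0$ there are constants $C_1,C_2>0$ with
\[
\frac{\psi(t')}{\psi(\tau)}\le C_1\left|\frac{t'-t}{\tau-t}\right|^{\beta(W_t\psi)+\eps}
\quad\text{and}\quad
\frac{\psi(t')}{\psi(\tau)}\le C_2\left|\frac{t'-t}{\tau-t}\right|^{\alpha(W_t\psi)-\eps}
\]
on the relevant pairs of points. These two one-sided bounds say, roughly, that $\psi$ is squeezed between two power weights $|\tau-t|^{\alpha(W_t\psi)-\eps}$ and $|\tau-t|^{\beta(W_t\psi)+\eps}$ up to multiplicative constants and up to the ambiguity between the arc $\omega(t,\delta)$ and the portion $\Gamma(t,R)$. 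I would feed these estimates into the two integral averages appearing in the $A_p(\Gamma)$ condition: the estimate \eqref{eq:estimate-1} controls $\frac1R\int_{\Gamma(t,R)}\psi^p$ from above by a constant times $\frac1R\int_{\Gamma(t,R)}|\tau-t|^{p(\beta(W_t\psi)+\eps)}$, while \eqref{eq:estimate-2} controls $\frac1R\int_{\Gamma(t,R)}\psi^{-q}$ from above by a constant times $\frac1R\int_{\Gamma(t,R)}|\tau-t|^{-q(\alpha(W_t\psi)-\eps)}$ (and symmetrically for the reversed comparisons, which give matching lower bounds needed for the \emph{only if} direction). Since on a Carleson curve the power weight $\varphi_{t,\lambda}$ lies in $A_p(\Gamma)$ precisely when $0<1/p+\lambda<1$ — a classical fact, and in any case a special case of the theorem itself with $\psi=\varphi_{t,\lambda}$, for which $W_t^0\psi$ has both indices equal to $\lambda$ — the Carleson growth $|\Gamma(t,R)|\asymp R$ makes these model integrals finite and comparable to the expected powers of $R$ in the right range of exponents.

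For the \emph{if} direction, assuming $0<1/p+\alpha(W_t^0\psi)$ and $1/p+\beta(W_t^0\psi)<1$, I choose $\eps>0$ small enough that $0<1/p+\alpha(W_t\psi)-\eps$ and $1/p+\beta(W_t\psi)+\eps<1$ still hold (recall $\alpha(W_t\psi)=\alpha(W_t^0\psi)$ and $\beta(W_t\psi)=\beta(W_t^0\psi)$ by \cite[Lemma~1.16]{BK97}), and the comparison with the two admissible power weights yields a uniform bound on the $A_p$ product over $t'\in\omega(t,\delta)$ and all $R$; uniformity in the base point $t'$ away from $t$ is handled by the continuity-boundedness remark above, and one must also check uniformity as $t'$ ranges over $\Gamma$, which follows since only the single point $t$ is exceptional. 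For the \emph{only if} direction I argue contrapositively: if, say, $1/p+\beta(W_t^0\psi)\ge 1$, then using the reversed form of Lemma~\ref{le:estimate} to bound $\psi$ \emph{below} by a power weight $|\tau-t|^{\beta(W_t\psi)-\eps}$ with $1/p+\beta(W_t\psi)-\eps\ge 1$ forces the first integral factor $\frac1R\int_{\Gamma(t,R)}\psi^p$ to blow up relative to $R$ as $R\to0$, contradicting $\psi\in A_p(\Gamma)$; the case $1/p+\alpha(W_t^0\psi)\le 0$ is symmetric, using the other factor.

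The main obstacle I anticipate is the bookkeeping between the geometric arc $\omega(t,\delta)$ and the metric ball $\Gamma(t,R)$: Lemma~\ref{le:estimate} is phrased in terms of $\omega(t,\delta)$, whereas the $A_p$ condition integrates over $\Gamma(t,R)$, and on a general Carleson curve these need not coincide (the ball $\Gamma(t,R)$ can meet the curve in several arcs). Resolving this requires splitting $\Gamma(t,R)$ into its intersection with $\omega(t,\delta)$ and the remainder, observing that on the remainder $\psi$ is bounded above and below, and using the Carleson condition to see that the contribution of each piece is commensurate with $R$; this is exactly the kind of estimate carried out in the proof of \cite[Theorem~2.33]{BK97}, and I would follow that argument. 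The rest is routine once the model power-weight computation and these localizations are in place.
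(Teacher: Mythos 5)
First, a point of reference: the paper does not prove Theorem~\ref{th:BK} at all --- it is quoted verbatim from \cite[Theorem~2.33]{BK97} --- so the only meaningful comparison is with the argument in that monograph. Your opening reductions (localize to a neighbourhood of $t$, compare with power weights) are the right first moves, modulo the slip that a finite-length curve does \emph{not} ``trivially'' satisfy the $A_p$ condition for weights bounded above and below: that is precisely the Carleson condition, which fortunately is among the hypotheses.

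The genuine gap is in the central step. After fixing the reference point, Lemma~\ref{le:estimate} yields the squeeze $c|\tau-t|^{\beta(W_t\psi)+\eps}\le\psi(\tau)\le C|\tau-t|^{\alpha(W_t\psi)-\eps}$ on $\omega(t,\delta)$; note you have the two inequalities attached to the wrong integrals, since (\ref{eq:estimate-2}) is the \emph{upper} bound on $\psi$ (hence controls $\int\psi^p$) and (\ref{eq:estimate-1}) is the \emph{lower} bound (hence controls $\int\psi^{-q}$). Feeding this squeeze into the $A_p$ product over $\Gamma(t,R)$ gives at best a bound of order
\[
\left(\frac1R\int_{\Gamma(t,R)}|\tau-t|^{p(\alpha-\eps)}|d\tau|\right)^{1/p}
\left(\frac1R\int_{\Gamma(t,R)}|\tau-t|^{-q(\beta+\eps)}|d\tau|\right)^{1/q}
\le C\,R^{\alpha-\eps}\cdot R^{-(\beta+\eps)}=C\,R^{\alpha-\beta-2\eps},
\]
which blows up as $R\to0$ whenever $\alpha(W_t^0\psi)<\beta(W_t^0\psi)$. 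Thus the global comparison with two fixed power weights proves sufficiency only in the degenerate case $\alpha=\beta$ and misses exactly the oscillating weights this theory exists for. The proof of \cite[Theorem~2.33]{BK97} avoids this loss by never leaving the scale $R$: decompose $\Gamma(t,R)$ dyadically and use the submultiplicative majorant itself to get
$\frac1R\int_{\Gamma(t,R)}\psi^p\,|d\tau|\le C\big(\min_{|\tau-t|=R}\psi\big)^p\sum_k2^{-k}\big((W_t\psi)(2^{-k})\big)^p$ and
$\frac1R\int_{\Gamma(t,R)}\psi^{-q}|d\tau|\le C\big(\max_{|\tau-t|=R}\psi\big)^{-q}\sum_k2^{-k}\big((W_t\psi)(2^{k})\big)^q$,
the two series converging precisely under $0<1/p+\alpha$ and $1/p+\beta<1$; the product then telescopes to a multiple of $\min_{|\tau-t|=R}\psi/\max_{|\tau-t|=R}\psi\le1$. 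Your necessity argument has a parallel defect: the ``reversed form of Lemma~\ref{le:estimate}'' giving $\psi(\tau)\ge c|\tau-t|^{\beta-\eps}$ does not exist and is false in general (only the exponent $\beta+\eps$ is available as a lower envelope, by the very definition of the upper index); necessity has to be extracted instead by testing the $A_p$ condition on balls centred at $t$ and bounding $W_t\psi$ in terms of the $A_p$ characteristic.
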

\subsection{The boundedness of $M$ on Nakano spaces with Khvedelidze weights}
The proof of Theorem~\ref{th:KPS} in \cite{KPS06} is based on the following result
proved in \cite[Theorem~A]{KS08}. It will also play  an essential role in our proof
of Theorem~\ref{th:boundedness}.
\begin{theorem}[Kokilashvili, S.~Samko]\label{th:KS-Khvedelidze}
Suppose $\Gamma$ is a simple Carleson curve and
$p:\Gamma\to(1,\infty)$ is a continuous function satisfying the
Dini-Lipschitz condition {\rm(\ref{eq:Dini-Lipschitz})}. Let
$t_1,\dots,t_n\in\Gamma$ be pairwise distinct points and
$\lambda_1,\dots,\lambda_n\in\R$. The maximal operator $M$ is
bounded on the Nakano space $L^{p(\cdot)}(\Gamma,w)$ with the
Khvedelidze weight $w$ given by {\rm(\ref{eq:Khvedelidze-weight})}
if and only if $0<1/p(t_j)+\lambda_j<1$ for all
$j\in\{1,\dots,n\}$.
\end{theorem}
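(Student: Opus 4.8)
The plan is to establish the two implications separately, in each case reducing everything to the behaviour of $M$ near the finitely many points $t_1,\dots,t_n$: on any arc disjoint from $\{t_1,\dots,t_n\}$ the weight $w$ is bounded above and below and $p$ is Dini--Lipschitz, so there the assertion collapses to the unweighted boundedness of $M$ on $L^{p(\cdot)}(\Gamma)$.

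For the necessity, suppose $M$ is bounded on $L^{p(\cdot)}(\Gamma,w)$ and fix $j$. The preliminary step is the two-sided estimate $\|\chi_{\Gamma(t_j,R)}\|_{p(\cdot),w}\asymp R^{1/p(t_j)+\lambda_j}$ for small $R$, obtained by the standard computation of the modular of the indicator of a ball centred at a singularity. This is exactly the place where the Dini--Lipschitz condition \eqref{eq:Dini-Lipschitz} is indispensable: together with the Carleson estimate $|\Gamma(t_j,R)|\asymp R$ it keeps $p(\tau)$ within $O(1/\log(1/|\tau-t_j|))$ of $p(t_j)$ on $\Gamma(t_j,R)$, which prevents the implied constants from degenerating. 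I would then test $M$ twice. First, for $f=\chi_{\Gamma(t_j,R)}$ one has $(Mf)(t)\ge cR/|t-t_j|$ on the annular set $R<|t-t_j|<\delta$ (again a consequence of the Carleson condition); computing $\|Mf\|_{p(\cdot),w}$ over that set and letting $R\to0$ forces $1/p(t_j)+\lambda_j<1$, the borderline value being ruled out by the logarithm that appears in the annular integral. Second, taking $g$ to be the indicator of a fixed arc around $t_j$ with $\Gamma(t_j,\delta)$ removed, one has $(Mg)(t)\ge c>0$ on $\Gamma(t_j,\delta)$, hence $\|\chi_{\Gamma(t_j,\delta)}\|_{p(\cdot),w}\le C\|g\|_{p(\cdot),w}<\infty$, i.e. $w\in L^{p(\cdot)}$ near $t_j$, which means $1/p(t_j)+\lambda_j>0$. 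Running this at every $j$ gives the stated conditions.

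For the sufficiency, assume $0<1/p(t_j)+\lambda_j<1$ for all $j$. First I would localise $M$: take a finite cover of $\Gamma$ by arcs --- one small arc $\gamma_j$ around each $t_j$ containing no other $t_k$, and one closed arc $\gamma_0$ disjoint from all of them --- choose slightly larger concentric arcs $\gamma_i'$ with $\overline{\gamma_i}\subset\gamma_i'$, and on $\gamma_i$ split $Mf\le M(f\chi_{\gamma_i'})+M(f\chi_{\Gamma\setminus\gamma_i'})$. On $\gamma_i$ the second term is pointwise $\le C\|f\|_{L^1(\Gamma)}$, and $\|f\|_{L^1(\Gamma)}\le C\|f\|_{p(\cdot),w}$ because $1/w\in L^{p'(\cdot)}(\Gamma)$ --- which holds precisely when $1/p(t_j)+\lambda_j<1$ for all $j$; multiplying by the indicator of $\gamma_i$ then costs only the factor $\|\chi_{\gamma_i}\|_{p(\cdot),w}$, which is finite precisely when $1/p(t_j)+\lambda_j>0$ for all $j$. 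So both hypotheses are used here, and it remains to bound $M$ on $L^{p(\cdot)}(\gamma_0',w)$ --- the unweighted statement, since $w$ is bounded above and below there --- and on $L^{p(\cdot)}(\gamma_j',|\cdot-t_j|^{\lambda_j})$, the other factors of $w$ being harmless. For the latter I would parametrise $\gamma_j'$ by arc length sending $t_j$ to $0$, freeze the exponent at $p(t_j)$ up to the error controlled by \eqref{eq:Dini-Lipschitz}, and reduce the weighted bound to the power-weighted Hardy inequality for variable exponents --- boundedness of $f\mapsto(R\mapsto R^{-1}\int_{\Gamma(t_j,R)}|f|)$ and of the conjugate Hardy operator in the $|\cdot-t_j|^{\lambda_j}$-weighted $L^{p(\cdot)}$ norm --- which is valid exactly under $0<1/p(t_j)+\lambda_j<1$. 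Reassembling the pieces yields the boundedness of $M$ on $L^{p(\cdot)}(\Gamma,w)$.

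The hard part is the sufficiency direction, and within it the two genuinely analytic inputs: the unweighted boundedness of $M$ on $L^{p(\cdot)}(\Gamma)$ for Dini--Lipschitz exponents (the Carleson-curve analogue of the variable-exponent Hardy--Littlewood maximal theorem) and the power-weighted variable-exponent Hardy inequality. Both rely essentially on the Dini--Lipschitz regularity of $p$; everything else is bookkeeping with the finitely many singularities and the Carleson estimate $|\Gamma(t,R)|\asymp R$.
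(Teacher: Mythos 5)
You should know at the outset that the paper does not prove Theorem~\ref{th:KS-Khvedelidze} at all: it is quoted as an external result, namely Theorem~A of \cite{KS08}, and is then used as a black box (it is the key input both for Theorem~\ref{th:KPS} in \cite{KPS06} and for the proof of Theorem~\ref{th:boundedness-maximal} in this paper). So there is no internal argument to compare yours against; what can be said is that your outline reconstructs, correctly, the standard strategy of the cited reference. The necessity via the two test functions is right: the Dini--Lipschitz condition (\ref{eq:Dini-Lipschitz}) together with $|\Gamma(t_j,R)|\asymp R$ freezes the exponent so that $R^{p(\tau)-p(t_j)}$ stays bounded, the annular computation with $Mf\ge cR/|t-t_j|$ rules out $1/p(t_j)+\lambda_j\ge 1$ (the borderline by the logarithm), and the second test gives $w\in L^{p(\cdot)}$ near $t_j$, i.e. $1/p(t_j)+\lambda_j>0$. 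One ordering remark: the second test must logically precede the first, since $\|\chi_{\Gamma(t_j,R)}\|_{p(\cdot),w}=\infty$ when $1/p(t_j)+\lambda_j\le 0$ and your two-sided estimate $\|\chi_{\Gamma(t_j,R)}\|_{p(\cdot),w}\asymp R^{1/p(t_j)+\lambda_j}$ is only available once positivity is known. The sufficiency via separation of singularities, H\"older with $1/w\in L^{q(\cdot)}(\Gamma)$ for the off-diagonal pieces, and reduction of the local piece at $t_j$ to the unweighted maximal theorem plus the two power-weighted variable-exponent Hardy inequalities is exactly how such results are proved; in the localization you should record explicitly that for $t\in\gamma_j'$ one has $M_\Gamma(f\chi_{\gamma_j'})(t)\le M_{\gamma_j'}(f|_{\gamma_j'})(t)$ because $|\Gamma(t,R)|\ge|\gamma_j'(t,R)|$, which is what lets you treat each $\gamma_j'$ as a simple Carleson curve in its own right (the same device appears in the paper's proof of Theorem~\ref{th:boundedness-maximal}). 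Neither point is a gap, only bookkeeping; the genuinely hard ingredients --- the unweighted variable-exponent maximal theorem on Carleson curves and the weighted variable-exponent Hardy inequalities --- you have identified correctly, and a complete proof would have to establish or import them.
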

\subsection{Sufficient condition for the boundedness of $M$ involving Muckenhoupt weights}
\label{sec:Muckenhoupt-ersatz} Although a complete
characterization of weights for which $M$ is bounded on weighted
variable Lebesgue spaces is still unknown in the setting of
arbitrary Carleson curves (see \cite{HD08} for the setting of
$\R^n$), one of the most significant recent results to achieve
this aim is the following sufficient condition (see
\cite[Theorem~${\rm A}^\prime$]{KSS07-JFSA}).
\begin{theorem}[Kokilashvili, N.~Samko, S.~Samko]
\label{th:KSS} Let $\Gamma$ be a simple Carleson curve,
$p:\Gamma\to(1,\infty)$ be a continuous function satisfying the
Dini-Lipschitz condition {\rm(\ref{eq:Dini-Lipschitz})}, and
$w:\Gamma\to[0,\infty]$ be a weight such that $w^{p/p_*}$ belongs
to the Muckenhoupt class $A_{p_*}(\Gamma)$, where
\begin{equation}\label{eq:p-min}
p_*:=p_*(\Gamma):=\min_{\tau\in\Gamma}p(\tau).
\end{equation}
Then $M$ is bounded on $L^{p(\cdot)}(\Gamma,w)$.
\end{theorem}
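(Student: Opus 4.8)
The plan is to adapt, to the weighted variable-exponent setting, the standard argument (Diening, Cruz-Uribe--Fiorenza--Neugebauer) for the boundedness of $M$ on unweighted variable Lebesgue spaces, reading the Muckenhoupt hypothesis as a constant-exponent weighted bound. First I would unpack what the hypothesis gives. Put $v:=w^{p(\cdot)/p_*}$; since $v(\tau)^{p_*}=w(\tau)^{p(\tau)}$ for a.e.\ $\tau\in\Gamma$, the well-known characterization of the boundedness of $M$ on $L^{p_*}(\Gamma,v)$ together with $v\in A_{p_*}(\Gamma)$ yields
\[
\int_\Gamma\big((Mg)(\tau)\big)^{p_*}w(\tau)^{p(\tau)}\,|d\tau|\le c_0\int_\Gamma|g(\tau)|^{p_*}w(\tau)^{p(\tau)}\,|d\tau|
\]
for every measurable $g$; this is the only point where the Muckenhoupt condition enters. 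The Carleson hypothesis on $\Gamma$ is needed both for this $A_{p_*}$ theory and, later, for the two-sided bound $R\le|\Gamma(t,R)|\le c_1R$ for $t\in\Gamma$, $0<R\le d_t$ (the lower estimate being rectifiability plus connectedness), while $p^*:=\max_{\tau\in\Gamma}p(\tau)<\infty$ by compactness.

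For the main line, it suffices to bound the modular $\int_\Gamma\big((Mf)(\tau)w(\tau)\big)^{p(\tau)}\,|d\tau|$ by a constant whenever $\int_\Gamma\big(|f(\tau)|w(\tau)\big)^{p(\tau)}\,|d\tau|\le1$. Fix such an $f$ and set $g:=|f|^{p(\cdot)/p_*}$, so that $|f(\sigma)|=g(\sigma)^{p_*/p(\sigma)}$ and, crucially, $\int_\Gamma g^{p_*}w^{p(\cdot)}\,|d\tau|=\int_\Gamma(|f|w)^{p(\cdot)}\,|d\tau|\le1$. Jensen's inequality with the constant exponent $p(t)/p_*\ge1$ on each arc $\Gamma(t,R)$ gives
\[
\frac{1}{|\Gamma(t,R)|}\int_{\Gamma(t,R)}|f(\sigma)|\,|d\sigma|\le\left(\frac{1}{|\Gamma(t,R)|}\int_{\Gamma(t,R)}g(\sigma)^{p(t)/p(\sigma)}\,|d\sigma|\right)^{p_*/p(t)}.
\]
Using the Dini--Lipschitz condition {\rm(\ref{eq:Dini-Lipschitz})} one replaces the frozen exponent $p(t)$ inside the integral by the running exponent $p(\sigma)$ up to a controllable error: splitting $\Gamma(t,R)$ according to whether $g(\sigma)\le1$ or $g(\sigma)>1$ and using that $|\Gamma(t,R)|^{C/\log(1/R)}$ stays bounded as $R\to0$, one bounds the right-hand side above by $c_2\big((Mg)(t)+(M\Phi)(t)\big)^{p_*/p(t)}$ for a fixed function $\Phi$ with $\int_\Gamma\Phi^{p_*}w^{p(\cdot)}\,|d\tau|<\infty$ (here one also uses that $\int_\Gamma w^{p(\cdot)}\,|d\tau|=\int_\Gamma v^{p_*}\,|d\tau|<\infty$, since $\Gamma$ is compact and Muckenhoupt weights are integrable). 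Taking the supremum over $R$, raising to the power $p(t)$, and using that $p$ is bounded, one arrives at the pointwise estimate
\[
\big((Mf)(t)\big)^{p(t)}\le c_3\Big(\big((Mg)(t)\big)^{p_*}+\big((M\Phi)(t)\big)^{p_*}\Big)\qquad(t\in\Gamma).
\]
Multiplying by $w(t)^{p(t)}$, integrating over $\Gamma$, and applying the displayed $A_{p_*}$ estimate to $g$ and to $\Phi$ completes the proof, since $\int_\Gamma g^{p_*}w^{p(\cdot)}\,|d\tau|\le1$ and $\int_\Gamma\Phi^{p_*}w^{p(\cdot)}\,|d\tau|<\infty$.

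The step I expect to be the main obstacle is precisely the exponent-freezing in the presence of the weight: the classical lemma that licenses replacing $p(t)$ by $p(\sigma)$ is stated for functions whose norm $\|g\|_{L^{p_*}(\Gamma)}$ is bounded by a constant in the \emph{unweighted} space, whereas here only $\int_\Gamma g^{p_*}w^{p(\cdot)}\,|d\tau|\le1$ is available and $w$ need not be bounded below. Handling this requires either a weighted refinement of that lemma --- transferring the estimate to $gv\in L^{p_*}(\Gamma)$ and controlling the averages of $\log w$ along the arcs $\Gamma(t,R)$, which is also why the full $A_{p_*}$ condition, rather than a bare $L^{p_*}$ bound, is used --- or, alternatively, replacing the argument above by an appeal to Rubio de Francia extrapolation for $M$ on Carleson curves together with the observation that, by the Dini--Lipschitz condition, $L^{p(\cdot)}(\Gamma,w)$ arises from $L^{p_*}(\Gamma,v)$ and $L^\infty(\Gamma)$ by complex interpolation with variable parameter $\theta(\tau)=1-p_*/p(\tau)$; on that route the only delicate point is the standard linearization of the positive sublinear operator $M$.
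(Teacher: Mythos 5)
First, a point of order: the paper does not prove this statement. Theorem~\ref{th:KSS} is imported verbatim from Kokilashvili, N.~Samko and S.~Samko \cite[Theorem~${\rm A}'$]{KSS07-JFSA} and is used as a black box (it enters only through Lemma~\ref{le:ersatz}), so there is no internal proof to compare yours against; the relevant benchmark is the argument in the cited source, which does run along the lines you sketch.

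Measured against that, your outline has the right architecture --- read the hypothesis as the constant-exponent bound $\int_\Gamma (Mg)^{p_*}w^{p(\cdot)}\,|d\tau|\le c_0\int_\Gamma |g|^{p_*}w^{p(\cdot)}\,|d\tau|$, reduce the variable-exponent modular estimate to it via Jensen with exponent $p(t)/p_*$, and freeze the exponent using the Dini--Lipschitz condition --- but it stops exactly at the step that constitutes the actual content of the theorem. The unweighted freezing lemma rests on the a priori bound $\frac{1}{|\Gamma(t,R)|}\int_{\Gamma(t,R)}g\,|d\sigma|\le C\,|\Gamma(t,R)|^{-1}$ (up to harmless powers), which is extracted from the \emph{unweighted} normalization $\int_\Gamma g^{p_*}\,|d\sigma|\le 1$; here you only have $\int_\Gamma g^{p_*}w^{p(\cdot)}\,|d\sigma|\le 1$, and since $w$ may vanish or blow up, that bound is simply unavailable. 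The repair is to apply H\"older with exponents $p_*,q_*$ against $v^{p_*}$ and $v^{-q_*}$ on each arc $\Gamma(t,R)$, invoke the full $A_{p_*}(\Gamma)$ condition to trade the average of $v^{-q_*}$ for the reciprocal of the average of $v^{p_*}$, and then use the standard lower bound $\int_{\Gamma(t,R)}v^{p_*}\,|d\tau|\ge c\,(R/|\Gamma|)^{p_*}\int_\Gamma v^{p_*}\,|d\tau|$ for Muckenhoupt weights to recover the needed decay in $R$. This is nontrivial, it is where the hypothesis $w^{p/p_*}\in A_{p_*}(\Gamma)$ is genuinely consumed (beyond the single weighted-norm inequality you extract from it), and you explicitly defer it. Your second proposed escape route is weaker still: Rubio de Francia extrapolation into variable-exponent spaces requires the boundedness of a maximal operator on an associated variable space as an \emph{input}, so it is close to circular when the target is $M$ itself, and there is no off-the-shelf complex interpolation theorem with variable parameter producing $L^{p(\cdot)}(\Gamma,w)$ from $L^{p_*}(\Gamma,v)$ and $L^\infty(\Gamma)$ that applies to the sublinear operator $M$. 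In short: correct strategy, the obstacle is honestly flagged, but the crux is asserted rather than proved.
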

This theorem does not contain the sufficiency portion of
Theorem~\ref{th:KS-Khvedelidze} whenever $p$ is variable because for the
weight $\varrho(\tau)=|\tau-t|^\lambda$ the condition
$\varrho^{p/p_*}\in A_{p_*}(\Gamma)$ is equivalent to
$-1/p(t)<\lambda<(p_*-1)/p(t)$, while the ``correct" interval for
$\lambda$ is wider:
\[
-1/p(t)<\lambda<(p(t)-1)/p(t).
\]
This means that the conditions of Theorem~\ref{th:KSS} cannot be
necessary unless $p$ is constant.
\subsection{Sufficient conditions for the boundedness of $M$ on weighted Nakano spaces}
Recall that two weights $w_1$ and $w_2$ on $\Gamma$ are said to be equivalent
if there is a bounded and bounded away from zero function $f$ on $\Gamma$
such that $w_1=fw_2$.

Now we will apply Theorem~\ref{th:KSS} to the weight $w$ given by
(\ref{eq:weight}).
\begin{lemma}\label{le:ersatz}
Let $\Gamma$ be a simple Carleson curve,  $p:\Gamma\to(1,\infty)$ be
a continuous function satisfying the Dini-Lipschitz condition
{\rm(\ref{eq:Dini-Lipschitz})}, and $t\in\Gamma$. Suppose
$\psi:\Gamma\setminus\{t\}\to(0,\infty)$ is a continuous functions such that
the function $W_t\psi$ is regular. If
\begin{equation}\label{eq:ersatz-1}
0<1/p(t)+\alpha(W_t^0\psi),
\quad
1/p(t)+\beta(W_t^0\psi)<p_*/p(t),
\end{equation}
where $p_*$ is defined by {\rm(\ref{eq:p-min})}, then the operator $M$ is
bounded on $L^{p(\cdot)}(\Gamma,\psi)$.
\end{lemma}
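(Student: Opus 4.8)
The plan is to reduce Lemma~\ref{le:ersatz} to Theorem~\ref{th:KSS} by showing that the hypothesis (\ref{eq:ersatz-1}) forces $\psi^{p/p_*}$ into the Muckenhoupt class $A_{p_*}(\Gamma)$. The natural tool for verifying membership in $A_{p_*}(\Gamma)$ for a weight that is continuous and nonzero off a single point $t$ is Theorem~\ref{th:BK}: applied with the exponent $p_*$ and the weight $\psi^{p/p_*}$, it says that $\psi^{p/p_*}\in A_{p_*}(\Gamma)$ if and only if
\[
0<1/p_*+\alpha\big(W_t^0(\psi^{p/p_*})\big),\quad 1/p_*+\beta\big(W_t^0(\psi^{p/p_*})\big)<1.
\]
So the whole lemma comes down to computing the indices of $W_t^0(\psi^{p/p_*})$ in terms of those of $W_t^0\psi$.

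The first substantive step is to check that $W_t(\psi^{p/p_*})$ is regular; this should follow because $p/p_*$ is a continuous function bounded between $1$ and $p_{\max}/p_*$, so $\psi^{p/p_*}$ is comparable, on any arc near $t$, to powers of $\psi$ with bounded exponents, and one can sandwich $W_t(\psi^{p/p_*})$ between $(W_t\psi)^{1}$ and $(W_t\psi)^{p_{\max}/p_*}$ (using the monotonicity of $W_t$ under raising to a power, as in Lemma~\ref{le:indices-relations}). The cleaner route, however, is to observe that since $p$ is Dini-Lipschitz continuous, $\log(p/p_*)\in BMO(\Gamma,t)$ trivially (it is continuous and bounded), so $p/p_*$ and the constant $p(t)/p_*$ differ by a weight whose logarithm is in $BMO(\Gamma,t)$ with arbitrarily small oscillation near $t$; hence $W_t^0(\psi^{p/p_*})$ has the same indices as $W_t^0(\psi^{p(t)/p_*})$. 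Then Lemma~\ref{le:indices-relations} with $s=p(t)/p_*>0$ gives
\[
\alpha\big(W_t^0(\psi^{p/p_*})\big)=\frac{p(t)}{p_*}\,\alpha(W_t^0\psi),\quad
\beta\big(W_t^0(\psi^{p/p_*})\big)=\frac{p(t)}{p_*}\,\beta(W_t^0\psi).
\]

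Plugging these into the criterion of Theorem~\ref{th:BK}, the condition $\psi^{p/p_*}\in A_{p_*}(\Gamma)$ becomes
\[
0<\frac{1}{p_*}+\frac{p(t)}{p_*}\alpha(W_t^0\psi),\quad \frac{1}{p_*}+\frac{p(t)}{p_*}\beta(W_t^0\psi)<1,
\]
and multiplying through by $p_*/p(t)$ this is exactly
\[
0<\frac{1}{p(t)}+\alpha(W_t^0\psi),\quad \frac{1}{p(t)}+\beta(W_t^0\psi)<\frac{p_*}{p(t)},
\]
i.e.\ (\ref{eq:ersatz-1}). Hence under (\ref{eq:ersatz-1}) we get $\psi^{p/p_*}\in A_{p_*}(\Gamma)$, and Theorem~\ref{th:KSS} yields boundedness of $M$ on $L^{p(\cdot)}(\Gamma,\psi)$.

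The main obstacle I anticipate is the bookkeeping around the first step: justifying that the indices of $W_t^0(\psi^{p/p_*})$ coincide with those of $W_t^0(\psi^{p(t)/p_*})$. One needs the Dini-Lipschitz condition here precisely to control $|p(\tau)-p(t)|$ near $t$ so that the discrepancy between $\psi^{p/p_*}$ and $\psi^{p(t)/p_*}$ is harmless for the $W_t^0$-indices; a direct estimate shows $\psi(\tau)^{(p(\tau)-p(t))/p_*}$ is bounded above and below by constants times $|\tau-t|^{\pm\eps}$ for any $\eps>0$ (using Lemma~\ref{le:estimate} together with $|p(\tau)-p(t)|\le -C/\log|\tau-t|$), so its $W_t^0$-indices are zero, and then Lemma~\ref{le:WW-estimates} gives the desired equality of indices. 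Everything else is a routine application of the results already assembled in Section~\ref{sec:Auxiliary_results}.
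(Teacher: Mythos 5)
Your proposal is correct and follows essentially the same route as the paper: reduce to Theorem~\ref{th:KSS} by verifying $\psi^{p/p_*}\in A_{p_*}(\Gamma)$, using Lemma~\ref{le:indices-relations} and Theorem~\ref{th:BK} together with the observation that the Dini--Lipschitz condition combined with the logarithmic bounds from Lemma~\ref{le:estimate} makes $\psi^{p/p_*}$ and $\psi^{p(t)/p_*}$ equivalent weights. The only cosmetic difference is that the paper applies Theorem~\ref{th:BK} to $\psi^{p(t)/p_*}$ and transfers $A_{p_*}$-membership directly through the equivalence of the two weights (their ratio is in fact bounded above and below by positive constants, with no $|\tau-t|^{\pm\eps}$ loss), rather than transferring the $W_t^0$-indices to $\psi^{p/p_*}$ via Lemma~\ref{le:WW-estimates} as you do.
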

\begin{proof}
The proof is analogous to the proof of \cite[Lemma~2.2]{Karlovich10-MN}.
Taking into account Lemma~\ref{le:indices-relations}, we see that the function
$W_t(\psi^{p(t)/p_*})$ is regular and inequalities (\ref{eq:ersatz-1}) are
equivalent to
\begin{eqnarray*}
0<\frac{1}{p_*}+\frac{p(t)}{p_*}\alpha(W_t^0\psi)
&=&
\frac{1}{p_*}+\alpha\big(W_t^0(\psi^{p(t)/p_*})\big),
\\
1>\frac{1}{p_*}+\frac{p(t)}{p_*}\beta(W_t^0\psi)
&=&
\frac{1}{p_*}+\beta\big(W_t^0(\psi^{p(t)/p_*})\big).
\end{eqnarray*}
By Theorem~\ref{th:BK}, the latter inequalities are equivalent to
$\psi^{p(t)/p_*}\in A_{p_*}(\Gamma)$.

Let us show that the weights $\psi^{p/p_*}$ and $\psi^{p(t)/p_*}$ are equivalent.
Fix $\eps>0$. Since $\psi:\Gamma\setminus\{t\}\to(0,\infty)$ is continuous,
from Lemma~\ref{le:estimate} it follows that there exist
$C_1,C_2>0$ such that
\[
C_1|\tau-t|^{\beta(W_t^0\psi)+\eps}
\le
\psi(\tau)
\le
C_2|\tau-t|^{\alpha(W_t^0\psi)-\eps}
\]
for all $\tau\in\Gamma\setminus\{t\}$. Then
\begin{eqnarray}
\log C_1+\big(\beta(W_t^0\psi)+\eps\big)\log|\tau-t|
&\le& \log\psi(\tau),
\label{eq:ersatz-2}
\\
\log C_2+\big(\alpha(W_t^0\psi)-\eps\big)\log|\tau-t|
&\ge& \log\psi(\tau)
\label{eq:ersatz-3}
\end{eqnarray}
for all $\tau\in\Gamma\setminus\{t\}$. By the Dini-Lipschitz condition
(\ref{eq:Dini-Lipschitz}),
\begin{equation}\label{eq:ersatz-4}
-\frac{C_\Gamma}{-\log|\tau-t|}\le p(\tau)-p(t)\le\frac{C_\Gamma}{-\log|\tau-t|}
\end{equation}
for all $\tau\in\Gamma\setminus\{t\}$ such that $|\tau-t|\le 1/2$. Multiplying
inequalities (\ref{eq:ersatz-2})--(\ref{eq:ersatz-4}), we see that the
function
\[
F_t(\tau):=\frac{p(\tau)-p(t)}{p_*}\log\psi(\tau)
\]
is bounded on $\Gamma(t,1/2)\setminus\{t\}$. Obviously, it is also bounded on
$\Gamma\setminus\Gamma(t,1/2)$. Therefore
\[
\frac{\psi(\tau)^{p(\tau)/p_*}}{\psi(\tau)^{p(t)/p_*}}=\exp(F_t(\tau))
\]
is bounded and bounded away from zero on $\Gamma\setminus\{t\}$. Thus the weights
$\psi^{p/p_*}$ and $\psi^{p(t)/p_*}$ are equivalent. In particular, this
implies that $\psi^{p/p_*}\in A_{p_*}(\Gamma)$ if and only if
$\psi^{p(t)/p_*}\in A_{p_*}(\Gamma)$. Thus, inequalities (\ref{eq:ersatz-1})
imply that $\psi^{p/p_*}\in A_{p_*}(\Gamma)$. Applying Theorem~\ref{th:KSS},
we finally conclude that the maximal operator $M$ is bounded on
$L^{p(\cdot)}(\Gamma,\psi)$.
\end{proof}
\begin{theorem}\label{th:boundedness-maximal}
Suppose $\Gamma$ is a simple Carleson curve and
$p:\Gamma\to(1,\infty)$ is a continuous function satisfying the
Dini-Lipschitz condition {\rm(\ref{eq:Dini-Lipschitz})}. Let
$t_1,\dots,t_n\in\Gamma$ be pairwise distinct points and
$\psi_j:\Gamma\setminus\{t_j\}\to(0,\infty)$ be continuous
functions such that the functions $W_{t_j}\psi_j$ are regular for
all $j\in\{1,\dots,n\}$. If for all $j\in\{1,\dots,n\}$,
\begin{equation}\label{eq:boundedness-maximal-1}
0<1/p(t_j)+\alpha(W_{t_j}^0\psi_j), \quad
1/p(t_j)+\beta(W_{t_j}^0\psi_j)<1,
\end{equation}
then the maximal operator $M$ is bounded on the Nakano space
$L^{p(\cdot)}(\Gamma,w)$ with weight $w$ given by
{\rm(\ref{eq:weight})}.
\end{theorem}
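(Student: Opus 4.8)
The plan is to reduce the boundedness of $M$ on the weighted Nakano space with the multi-singularity weight $w=\prod_{j=1}^n\psi_j$ to a product of single-singularity situations, each handled by Lemma~\ref{le:ersatz}, together with a ``gluing'' argument localizing the weight near each $t_j$. First I would cover $\Gamma$ by finitely many arcs: for each $j\in\{1,\dots,n\}$, choose an open arc $\Gamma_j\ni t_j$ whose closure contains none of the other points $t_i$, and choose an open arc $\Gamma_0$ whose closure avoids all of $t_1,\dots,t_n$, so that $\Gamma=\Gamma_0\cup\Gamma_1\cup\dots\cup\Gamma_n$. On $\overline{\Gamma_j}$ the factors $\psi_i$ with $i\ne j$ are continuous and bounded away from $0$ and $\infty$ (as in the proof of Lemma~\ref{le:V-relations}), so $w$ is equivalent on $\Gamma_j$ to $\psi_j$; on $\overline{\Gamma_0}$ the weight $w$ is continuous and bounded away from $0$ and $\infty$. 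Since the maximal operator is a local operator in the sense that a weighted norm inequality for $M$ on $\Gamma$ can be assembled from weighted norm inequalities on a finite cover (modulo controlling contributions of balls $\Gamma(t,R)$ of size comparable to the arc lengths, where the relevant pieces of $w$ are bounded), it suffices to prove that $M$ is bounded on $L^{p(\cdot)}(\Gamma,w_j)$ for each $j$, where $w_j$ is a weight equal to $\psi_j$ near $t_j$ and continuous and bounded away from $0$ and $\infty$ elsewhere; and that $M$ is bounded on $L^{p(\cdot)}(\Gamma)$, which is a special case of Theorem~\ref{th:KS-Khvedelidze} with all $\lambda_j=0$.

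Next I would extend $\psi_j$ itself to a weight $\widetilde\psi_j$ on all of $\Gamma$ that has a single singularity at $t_j$, coincides with $\psi_j$ on $\Gamma_j$, and is continuous and bounded away from $0$ and $\infty$ outside $\Gamma_j$; then $W_{t_j}\widetilde\psi_j$ is still regular with the same indices $\alpha(W_{t_j}^0\widetilde\psi_j)=\alpha(W_{t_j}^0\psi_j)$, $\beta(W_{t_j}^0\widetilde\psi_j)=\beta(W_{t_j}^0\psi_j)$, because the defining $\limsup_{R\to0}$ only sees the germ of $\psi_j$ at $t_j$. Now I invoke Lemma~\ref{le:ersatz}: its hypothesis (\ref{eq:ersatz-1}) requires $0<1/p(t_j)+\alpha(W_{t_j}^0\psi_j)$ and $1/p(t_j)+\beta(W_{t_j}^0\psi_j)<p_*/p(t_j)$. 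The left inequality is exactly the left half of the hypothesis (\ref{eq:boundedness-maximal-1}), but the right inequality is \emph{strictly stronger} than $1/p(t_j)+\beta(W_{t_j}^0\psi_j)<1$ whenever $p$ is nonconstant, since $p_*/p(t_j)\le 1$. This is the crux of the argument and the main obstacle.

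To overcome it I would not apply Lemma~\ref{le:ersatz} to $p(\cdot)$ directly but rather interpolate/perturb the exponent. The key observation is that the ``correct'' interval $(-1/p(t),(p(t)-1)/p(t))$ for a power-weight exponent at $t$ on the variable Lebesgue space is precisely the one appearing in Theorem~\ref{th:KS-Khvedelidze} (via Khvedelidze weights), and that near $t_j$ the weight $\psi_j$ is squeezed between two power weights $|\tau-t_j|^{\beta(W_{t_j}^0\psi_j)+\eps}$ and $|\tau-t_j|^{\alpha(W_{t_j}^0\psi_j)-\eps}$ by Lemma~\ref{le:estimate}. So I would: (i) pick $\eps>0$ small enough that, setting $\lambda_j^-:=\alpha(W_{t_j}^0\psi_j)-\eps$ and $\lambda_j^+:=\beta(W_{t_j}^0\psi_j)+\eps$, the strict inequalities $0<1/p(t_j)+\lambda_j^-$ and $1/p(t_j)+\lambda_j^+<1$ still hold (possible by (\ref{eq:boundedness-maximal-1})); (ii) write $\psi_j = v_j\cdot u_j$ near $t_j$, where $v_j$ is a Khvedelidze-type power weight with a suitable exponent in that interval and $u_j=\psi_j/v_j$ has $W_{t_j}u_j$ regular with indices lying in a window making $u_j^{p(t_j)/p_*}\in A_{p_*}(\Gamma)$ via Theorem~\ref{th:BK}; more cleanly, (ii$'$) apply the boundedness of $M$ on $L^{p(\cdot)}(\Gamma,w')$ where $w'$ is the Khvedelidze weight $\prod_j|\tau-t_j|^{\lambda_j}$ with $\lambda_j\in(-1/p(t_j),(p(t_j)-1)/p(t_j))$ chosen so that $w/w'$ has indices of powerlikeness at each $t_j$ satisfying (\ref{eq:ersatz-1}) \emph{with $p$ replaced by the constant $p_*$}, then factor the modular estimate accordingly using Lemmas~\ref{le:indices-relations} and \ref{le:WW-estimates} to split indices and Theorem~\ref{th:KS-Khvedelidze} for the power-weight part. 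In fact the slickest route is: the hypotheses (\ref{eq:boundedness-maximal-1}) let us choose, for each $j$, a number $\lambda_j$ with $-1/p(t_j)<\lambda_j$ and $\lambda_j<(p(t_j)-1)/p(t_j)$ and moreover $1/p(t_j)+\big(\beta(W_{t_j}^0\psi_j)-\lambda_j\big)<p_*/p(t_j)$ and $1/p(t_j)+\big(\alpha(W_{t_j}^0\psi_j)-\lambda_j\big)>0$; then Theorem~\ref{th:KS-Khvedelidze} gives $M$ bounded on $L^{p(\cdot)}(\Gamma,w')$ with $w'=\prod_j|\tau-t_j|^{\lambda_j}$, Lemma~\ref{le:ersatz} applied to $\psi_j/v_j$ (which is checked via Lemmas~\ref{le:indices-relations}--\ref{le:WW-estimates}) gives the complementary bound, and a standard splitting of the Luxemburg norm of $Mf$ in terms of the two weights, localized via the cover $\{\Gamma_j\}$, closes the argument. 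The one genuinely delicate point, beyond routine index bookkeeping, is making the localization rigorous for $M$ on a curve — controlling the maximal function over large balls $\Gamma(t,R)$ that straddle several arcs — which I would handle by the usual decomposition $Mf\le M(f\chi_{\Gamma_j})+$ (bounded terms), using that on the complement of a neighborhood of the singular set all the weights are comparable to $1$.
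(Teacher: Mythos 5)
Your first step (separation of the singularities via a finite cover of arcs) matches the paper, and you correctly put your finger on the real obstacle: Lemma~\ref{le:ersatz} demands $1/p(t_j)+\beta(W_{t_j}^0\psi_j)<p_*/p(t_j)$, which is strictly stronger than the hypothesis $1/p(t_j)+\beta(W_{t_j}^0\psi_j)<1$ when $p$ is nonconstant. But the workaround you propose does not close this gap. First, your ``slickest route'' requires a $\lambda_j$ with $\lambda_j>\beta(W_{t_j}^0\psi_j)+(1-p_*)/p(t_j)$ and $\lambda_j<\alpha(W_{t_j}^0\psi_j)+1/p(t_j)$; such a $\lambda_j$ exists only if $\beta(W_{t_j}^0\psi_j)-\alpha(W_{t_j}^0\psi_j)<p_*/p(t_j)$, and with the \emph{global} minimum $p_*=\min_\Gamma p$ this can fail (take $p(t_j)$ large, $p_*$ close to $1$, and $\beta-\alpha$ of order $1/2$). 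Second, and more fundamentally, there is no principle that lets you deduce boundedness of $M$ on $L^{p(\cdot)}(\Gamma,w'u)$ from boundedness on $L^{p(\cdot)}(\Gamma,w')$ plus separate information about $u$: weighted norm inequalities for the maximal operator do not multiply, so ``a standard splitting of the Luxemburg norm of $Mf$ in terms of the two weights'' is not an available step. Finally, your remark that the cross-arc contributions are harmless because ``the relevant pieces of $w$ are bounded'' overlooks that in the weighted maximal operator the \emph{reciprocal} $1/\psi_j(\tau)$ appears under the integral and is unbounded near $t_j$ whenever $\alpha(W_{t_j}^0\psi_j)>0$.

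The paper's resolution of the $p_*$ mismatch is the idea your proof is missing: since $p$ is continuous, one shrinks the arc $\omega(t_j,\delta)$ around $t_j$ until $p_*:=\min_{\overline{\omega(t_j,\delta)}}p$ is so close to $p(t_j)$ that $1+\beta(W_{t_j}^0\psi_j)\,p(t_j)<p_*$, i.e.\ condition (\ref{eq:ersatz-1}) holds with this \emph{local} $p_*$; Lemma~\ref{le:ersatz} is then applied on $\omega(t_j,\delta)$ only. The operator (rewritten as the weighted maximal operator $M_j$ acting on the unweighted space) is split into the four pieces $\chi_{\omega}M_j\chi_{\omega}$, $\chi_{\Gamma\setminus\omega}M_j\chi_{\omega}$, $\chi_{\omega}M_j\chi_{\Gamma\setminus\omega}$, $\chi_{\Gamma\setminus\omega}M_j\chi_{\Gamma\setminus\omega}$: the first is handled by the localized Lemma~\ref{le:ersatz}; the two cross terms are dominated pointwise, via the power-weight estimates of Lemma~\ref{le:estimate}, by the Khvedelidze-weighted maximal operators $M_j^{\beta+\eps}$ and $M_j^{\alpha-\eps}$, which are bounded by Theorem~\ref{th:KS-Khvedelidze}; and the last term reduces to the unweighted $M$. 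If you replace your weight-factorization step by this shrink-the-arc argument and the four-term operator decomposition, the rest of your outline goes through.
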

\begin{proof}
The idea of the proof is borrowed from \cite[Theorem~B]{KSS07-JFSA} (see also
\cite[Theorem~1.4]{Karlovich10-MN}).

We start the proof with a kind of separation of singularities of the weight.
Let arcs $\Gamma_1,\dots,\Gamma_n\subset\Gamma$ form a partition of $\Gamma$,
that is, each two arcs $\Gamma_i$ and $\Gamma_k$ may have only endpoints
in common and $\Gamma_1\cup\dots\cup\Gamma_n=\Gamma$. We will assume that
each arc $\Gamma_j$ is homeomorphic to a segment. Suppose that this partition
has the following property: each point $t_j$ belongs to $\Gamma_j$ and all
other points in $\{t_1,\dots,t_n\}\setminus\{t_j\}$ do not belong to $\Gamma_j$.

Obviously, the function
\[
w/\psi_j:=\psi_1\dots\psi_{j-1}\widetilde{\psi_j}\psi_{j+1}\dots\psi_n,
\]
where $\widetilde{a}$ denotes that the term $a$ is absent,
is continuous on the closed set $\Gamma_j$. Therefore,
\[
\inf_{\tau\in\Gamma_j}\left(\frac{w}{\psi_j}\right)(\tau)=:c_j>0,
\quad
\sup_{\tau\in\Gamma_j}\left(\frac{w}{\psi_j}\right)(\tau)=:C_j<+\infty.
\]
Hence, for every $f\in L^{p(\cdot)}(\Gamma,w)$, we have
\begin{eqnarray*}
\|f\|_{L^{p(\cdot)}(\Gamma,w)}
&\le &
\sum_{j=1}^n \|f\chi_{\Gamma_j}\|_{L^{p(\cdot)}(\Gamma,w)}
=
\sum_{j=1}^n \left\|f\left(\frac{w}{\psi_j}\right)\psi_j\chi_{\Gamma_j}\right\|_{L^{p(\cdot)}(\Gamma)}
\\
&\le&
\sum_{j=1}^n C_j\|f|_{\Gamma_j}\|_{L^{p(\cdot)}(\Gamma_j,\psi_j|_{\Gamma_j})}
\end{eqnarray*}
and
\begin{eqnarray*}
\|f|_{\Gamma_j}\|_{L^{p(\cdot)}(\Gamma_j,\psi_j|_{\Gamma_j})}
&=&
\|(f\psi_j)|_{\Gamma_j}\|_{L^{p(\cdot)}(\Gamma_j)}
=
\left\|(fw)|_{\Gamma_j}\left(\frac{w}{\psi_j}\right)^{-1}\Big|_{\Gamma_j}\right\|_{L^{p(\cdot)}(\Gamma_j)}
\\
&\le&
\frac{1}{c_j}\|(fw)|_{\Gamma_j}\|_{L^{p(\cdot)}(\Gamma_j)}
=
\frac{1}{c_j}\|fw\chi_{\Gamma_j}\|_{L^{p(\cdot)}(\Gamma)}
\\
&\le&
\frac{1}{c_j}\|fw\|_{L^{p(\cdot)}(\Gamma)}
=
\frac{1}{c_j}\|f\|_{L^{p(\cdot)}(\Gamma,w)}
\end{eqnarray*}
for every $j\in\{1,\dots,n\}$. From these estimates it follows that it is sufficient
to prove that $M$ is bounded on $L^{p(\cdot)}(\Gamma_j,\psi_j|_{\Gamma_j})$ for each
$j\in\{1,\dots,n\}$.

Fix $j\in\{1,\dots,n\}$. For simplicity of notation, assume that $\Gamma_j=\Gamma$.
This does not cause any problem because
\[
\big(W_{t_j}(\psi_j|_{\Gamma_j})\big)(x)
\le
(W_{t_j}\psi_j)(x),
\quad
\big(W_{t_j}^0(\psi_j|_{\Gamma_j})\big)(x)
=
(W_{t_j}^0\psi_j)(x)
\]
for all $x\in\R$. Therefore, $W_{t_j}(\psi_j|_{\Gamma_j})$ and
$W_{t_j}^0(\psi_j|_{\Gamma_j})$ are regular and
\[
\alpha
:=
\alpha\big(W_{t_j}^0(\psi_j|_{\Gamma_j})\big)
=
\alpha(W_{t_j}^0\psi_j),
\quad
\beta
:=
\beta\big(W_{t_j}^0(\psi_j|_{\Gamma_j})\big)
=
\beta(W_{t_j}^0\psi_j).
\]

It is easily seen that $M$ is bounded on $L^{p(\cdot)}(\Gamma,\psi_j)$ if and
only if the operator
\[
(M_jf)(t):=\sup_{R>0}\frac{\psi_j(t)}{|\Gamma(t,R)}\int_{\Gamma(t,R)}\frac{|f(\tau)|}{\psi_j(\tau)}|d\tau|
\quad
(t\in\Gamma)
\]
is bounded on $L^{p(\cdot)}(\Gamma)$. From (\ref{eq:boundedness-maximal-1})
it follows that there is a small $\eps>0$ such that
\begin{equation}\label{eq:boundedness-maximal-2}
0<1/p(t_j)+\alpha-\eps\le 1/p(t_j)+\beta+\eps<1.
\end{equation}
Since $p:\Gamma\to(1,\infty)$ is continuous and $1/p(t_j)+\beta<1$, we can choose
a number $\delta\in(0,d_{t_j})$ such that the arc $\omega(t_j,\delta)$, which contains
$t_j$ and has the endpoints on the circle $\{\tau\in\C:|\tau-t_j|=\delta\}$, is so small
that $1+\beta p(t_j)<p_*$, where
\[
p_*:=p_*(\omega(t_j,\delta))=\min_{\tau\in\overline{\omega(t_j,\delta)}}p(\tau).
\]
Hence
\begin{equation}\label{eq:boundedness-maximal-3}
0<1/p(t_j)+\alpha\le 1/p(t_j)+\beta<p_*/p(t_j).
\end{equation}

For $f\in L^{p(\cdot)}(\Gamma)$, we have
\begin{eqnarray}
\label{eq:boundedness-maximal-4}
M_jf &\le& \chi_{\omega(t_j,\delta)} M_j\chi_{\omega(t_j,\delta)}f+
\chi_{\Gamma\setminus\omega(t_j,\delta)}M_j\chi_{\omega(t_j,\delta)}f
\\
\nonumber
&&+
\chi_{\omega(t_j,\delta)}M_j\chi_{\Gamma\setminus\omega(t_j,\delta)}f+
\chi_{\Gamma\setminus\omega(t_j,\delta)}M_j\chi_{\Gamma\setminus\omega(t_j,\delta)}f.
\end{eqnarray}

From (\ref{eq:boundedness-maximal-3}) and Lemma~\ref{le:ersatz} it follows that
$M_j$ is bounded on $L^{p(\cdot)}(\omega(t_j,\delta))$. Consequently, the
operator $\chi_{\omega(t_j,\delta)}M_j\chi_{\omega(t_j,\delta)}I$ is bounded
on $L^{p(\cdot)}(\Gamma)$.

For $\lambda\in\R$, by $M_j^\lambda$ denote the weighted maximal operator
defined by
\[
(M_j^\lambda f)(t):=\sup_{R>0}\frac{|t-t_j|^\lambda}{|\Gamma(t,R)|}
\int_{\Gamma(t,R)}\frac{|f(\tau)|}{|\tau-t_j|^\lambda}|d\tau|.
\]
From Lemma~\ref{le:estimate} it follows that
\begin{equation}\label{eq:boundedness-maximal-5}
\chi_{\Gamma\setminus\omega(t_j,\delta)}M_j\chi_{\omega(t_j,\delta)}f
\le
C_1 \chi_{\Gamma\setminus\omega(t_j,\delta)}M_j^{\beta+\eps}\chi_{\omega(t_j,\delta)}f
\le
C_1 M_j^{\beta+\eps}f
\end{equation}
and
\begin{equation}\label{eq:boundedness-maximal-6}
\chi_{\omega(t_j,\delta)}M_j\chi_{\Gamma\setminus\omega(t_j,\delta)}f
\le
C_2 \chi_{\omega(t_j,\delta)}M_j^{\alpha-\eps}\chi_{\Gamma\setminus\omega(t_j,\delta)}f
\le
C_2 M_j^{\alpha-\eps}f,
\end{equation}
where $C_1$ and $C_2$ are positive constants depending only on $\eps,\delta$,
and $\psi_j$. From (\ref{eq:boundedness-maximal-2}) and Theorem~\ref{th:KS-Khvedelidze}
it follows that the operators $M_j^{\alpha-\eps}$ and $M_j^{\beta+\eps}$ are
bounded on $L^{p(\cdot)}(\Gamma)$. From here and
(\ref{eq:boundedness-maximal-5})--(\ref{eq:boundedness-maximal-6}) we conclude that
$\chi_{\Gamma\setminus\omega(t_j,\delta)}M_j\chi_{\omega(t_j,\delta)}I$ and
$\chi_{\omega(t_j,\delta)}M_j\chi_{\Gamma\setminus\omega(t_j,\delta)}I$ are
bounded on $L^{p(\cdot)}(\Gamma)$.

Finally, since $\Gamma\setminus\omega(t_j,\delta)$ does not contain the
singularity of the weight $\psi_j$ (which is continuous on $\Gamma\setminus\{t_j\}$),
there exists a constant $C_3>0$ such that
\[
\chi_{\Gamma\setminus\omega(t_j,\delta)}M_j\chi_{\Gamma\setminus\omega(t_j,\delta)}f\le C_3Mf.
\]
Theorem~\ref{th:KS-Khvedelidze} and the above estimate yield the boundedness
of $\chi_{\Gamma\setminus\omega(t_j,\delta)}M_j\chi_{\Gamma\setminus\omega(t_j,\delta)}I$
on $L^{p(\cdot)}(\Gamma)$. Thus, all operators on the right-hand side of
(\ref{eq:boundedness-maximal-4}) are bounded on $L^{p(\cdot)}(\Gamma)$.
Therefore, the operator on the left-hand side of (\ref{eq:boundedness-maximal-4})
is bounded, too. This completes the proof of the boundedness of $M$
on $L^{p(\cdot)}(\Gamma_j,\psi_j|_{\Gamma_j})$.
\end{proof}
\section{The Cauchy singular integral operator on weighted Nakano spaces}
\label{sec:Singular_operator}
\subsection{Necessary conditions for the boundedness of the operator $S$}
We will need the following necessary condition for the boundedness of
$S$ on weighted Nakano spaces.
\begin{theorem}\label{th:Karlovich-necessity}
Let $\Gamma$ be a simple rectifiable curve and let $p:\Gamma\to(1,\infty)$ be
a continuous function satisfying the Dini-Lipschitz condition
{\rm(\ref{eq:Dini-Lipschitz})}. If $w:\Gamma\to[0,\infty]$ is an arbitrary
weight such that the operator $S$ is bounded on $L^{p(\cdot)}(\Gamma,w)$,
then $\Gamma$ is a Carleson curve, $\log w\in BMO(\Gamma,t)$, the functions
$V_tw$ and $V_t^0w$ are regular and submultiplicative, and
\begin{equation}\label{eq:Karlovich-necessity-1}
0\le 1/p(t)+\alpha(V_t^0w),\quad 1/p(t)+\beta(V_t^0w)\le 1
\end{equation}
for every $t\in\Gamma$. If, in addition, $\Gamma$ is a rectifiable Jordan curve,
then
\begin{equation}\label{eq:Karlovich-necessity-2}
0<1/p(t)+\alpha(V_t^0w),\quad 1/p(t)+\beta(V_t^0w)<1
\end{equation}
for every $t\in\Gamma$.
\end{theorem}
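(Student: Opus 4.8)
The plan is to transfer the boundedness of $S$ on $L^{p(\cdot)}(\Gamma,w)$ to a weighted Lebesgue-space statement at each fixed point $t\in\Gamma$ and then invoke the known necessary conditions for standard weighted Lebesgue spaces. First I would recall the localization principle: if $S$ is bounded on $L^{p(\cdot)}(\Gamma,w)$, then for any point $t\in\Gamma$ and any arc $\Gamma_t$ containing $t$ in its interior, $S$ (or rather the restriction of $S$ to a small arc, modulo operators with continuous kernels) is bounded on the ``localized'' space at $t$. Using the Dini--Lipschitz condition on $p$, one shows via an argument as in \cite{KPS06,KS08} that near $t$ the variable exponent $p(\cdot)$ can be replaced by the constant $p(t)$: more precisely, the weight $|\tau-t|^{(p(\tau)-p(t))/\text{(something)}}$-type corrections are bounded and bounded away from zero, so that locally $L^{p(\cdot)}(\Gamma,w)$ is, up to equivalence of norms on a neighborhood of $t$, the space $L^{p(t)}(\Gamma,w\cdot v_t)$ where $v_t$ is a bounded, bounded-away-from-zero weight (the exponential of a bounded function). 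Since $S$ differs from its localized version by an operator with a continuous (hence bounded on all these spaces) kernel, boundedness of $S$ on $L^{p(\cdot)}(\Gamma,w)$ forces boundedness of $S$ on $L^{p(t)}$ over a neighborhood of $t$ with the weight $w$, equivalently $w\in A_{p(t)}$ locally at $t$. This already yields that $\Gamma$ is a Carleson curve (by David's theorem, since $S$ being bounded on some $L^{p}$, $1<p<\infty$, over $\Gamma$ requires it), and by the Böttcher--Yu.~Karlovich theory \cite[Chap.~3]{BK97} on the characterization of Muckenhoupt weights via the functionals $V_t$, it gives $\log w\in BMO(\Gamma,t)$, regularity and submultiplicativity of $V_tw$ and $V_t^0w$ (Theorem~\ref{th:BK-V-regularity}), and the closed-interval inequalities \eqref{eq:Karlovich-necessity-1}.

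In more detail, the derivation of \eqref{eq:Karlovich-necessity-1} goes as follows. The local Muckenhoupt membership $w\in A_{p(t)}$ near $t$, combined with the indices of powerlikeness, is exactly the statement
\[
0<\frac{1}{p(t)}+\alpha(V_t^0w),\qquad \frac{1}{p(t)}+\beta(V_t^0w)<1
\]
when the inequalities are strict; but since we only get boundedness over an open arc (not the whole Jordan curve) and the localization introduces an arbitrarily small loss, what survives in general is the non-strict version \eqref{eq:Karlovich-necessity-1}. Concretely, for each $\eps>0$ one applies Lemma~\ref{le:estimate} to sandwich $w$ near $t$ between power weights $|\tau-t|^{\beta(V_t^0 w)+\eps}$ and $|\tau-t|^{\alpha(V_t^0 w)-\eps}$ (here one first passes from $V_t$ to $W_t$-type control, using Lemma~\ref{le:WV-relations} together with a representation $w=\psi\cdot(\text{BMO-neutral factor})$ near $t$), then uses the necessity of the Khvedelidze-weight condition for boundedness of $S$ on $L^{p(t)}$ over a Carleson arc (Gohberg--Krupnik \cite{GK70}, contained in Theorem~\ref{th:KPS}), to conclude $0\le 1/p(t)+\alpha(V_t^0 w)-\eps$ and $1/p(t)+\beta(V_t^0 w)+\eps\le 1$ cannot be violated too badly; letting $\eps\to0$ gives \eqref{eq:Karlovich-necessity-1}. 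The passage to the closed arc is what costs the strictness.

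Finally, for the Jordan-curve case I would upgrade \eqref{eq:Karlovich-necessity-1} to the strict \eqref{eq:Karlovich-necessity-2}. On a Jordan curve the operator $S$ is genuinely defined globally (there is no boundary-of-arc artifact), so if $S$ is bounded on $L^{p(\cdot)}(\Gamma,w)$ then it is also bounded on the associate space, and an interpolation/duality argument (the associate space of $L^{p(\cdot)}(\Gamma,w)$ is $L^{p'(\cdot)}(\Gamma,w^{-1})$ up to equivalence, with $1/p+1/p'=1$) shows that both $w$ and a slightly perturbed weight $w|\tau-t|^{\pm\eps}$ still give a bounded $S$ for small $\eps>0$; this openness forces the strict inequalities. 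Alternatively, and more in the spirit of \cite{BK97}, one uses that on a Jordan curve $S^2=I$, so $S$ bounded implies $S$ invertible, and invertibility of $S$ on $L^{p(t)}(\Gamma,w)$ over a Carleson Jordan curve is equivalent (again by \cite[Chap.~3]{BK97}) to the \emph{strict} Muckenhoupt condition. I expect the main obstacle to be the careful justification of the localization-and-freezing step — i.e., that boundedness of $S$ on the variable-exponent space with the genuinely variable weight $w$ descends to boundedness on a constant-exponent space with a Muckenhoupt weight, uniformly enough in the localization parameter that the index inequalities are preserved. Once that reduction is in place, everything else is a direct citation of the Böttcher--Yu.~Karlovich machinery (Theorems~\ref{th:BK-V-regularity} and \ref{th:BK}) and Lemmas~\ref{le:WV-relations}, \ref{le:estimate}.
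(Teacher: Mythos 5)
Your argument hinges on a reduction that is false as stated: the claim that, thanks to the Dini--Lipschitz condition, ``locally $L^{p(\cdot)}(\Gamma,w)$ is, up to equivalence of norms on a neighborhood of $t$, the space $L^{p(t)}(\Gamma,w\cdot v_t)$ with $v_t$ bounded and bounded away from zero.'' Condition (\ref{eq:Dini-Lipschitz}) only gives $|\tau-t|^{|p(\tau)-p(t)|}\asymp 1$, which permits freezing the exponent \emph{on power weights and on power-type test functions} (this is exactly how it is used in Lemma~\ref{le:ersatz}); it does not identify the spaces. Multiplying by a weight that is bounded and bounded away from zero never changes a variable Lebesgue space, and by the Kov\'a{\v c}ik--R\'akosn{\'\i}k embedding theorem \cite{KR91} one has $L^{p(\cdot)}(\Gamma_t)\hookrightarrow L^{q(\cdot)}(\Gamma_t)$ on a set of finite measure only if $q\le p$ a.e., so $L^{p(\cdot)}(\Gamma_t)=L^{p(t)}(\Gamma_t)$ with equivalent norms forces $p\equiv p(t)$ a.e.\ near $t$. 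Hence boundedness of $S$ on $L^{p(\cdot)}(\Gamma,w)$ does not descend, by your route, to boundedness on a constant-exponent space, and the subsequent appeal to the $A_{p(t)}$ characterization and to Theorems~\ref{th:BK-V-regularity} and~\ref{th:BK} has nothing to stand on. (Your heuristic that ``the passage to the closed arc is what costs the strictness'' is also off: for constant $p$ the Muckenhoupt characterization, and hence the strict inequalities, holds on arcs as well; the loss of strictness in the general simple-curve case comes from the variable exponent, and the Jordan hypothesis is what allows it to be recovered.)

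The paper does not attempt this reduction at all: its proof is a citation. For simple curves the statement (Carleson property, $\log w\in BMO(\Gamma,t)$, regularity of $V_tw$ and $V_t^0w$, and (\ref{eq:Karlovich-necessity-1})) is taken from \cite[Lemma~4.9, Theorems~5.9 and~6.1]{Karlovich03}, where the necessary conditions are derived directly in the weighted Banach-function-space framework by testing $S$ on explicit power-type test functions supported near $t$ and estimating the resulting Luxemburg norms (this is where Dini--Lipschitz legitimately enters, via $\|\chi_{\Gamma(t,R)}\|_{p(\cdot)}\asymp R^{1/p(t)}$-type estimates); the strict inequalities (\ref{eq:Karlovich-necessity-2}) in the Jordan case are quoted from \cite[Corollary~4.2]{Karlovich09-IWOTA07}. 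If you want a self-contained argument, you should replace the freezing step by this test-function scheme; as written, the proposal has a genuine gap at its central step.
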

\begin{proof}
For simple curves, the statement follows from \cite[Lemma~4.9]{Karlovich03}
and \cite[Theorems~5.9 and~6.1]{Karlovich03}. For Jordan curves, inequality
(\ref{eq:Karlovich-necessity-2}) was proved in \cite[Corollary~4.2]{Karlovich09-IWOTA07}.
\end{proof}
\subsection{The boundedness of $M$ implies the boundedness of $S$}
One of the main ingredients of the proof of
Theorem~\ref{th:boundedness} is the following recent result by
Kokilashvili and S.~Samko \cite[Theorem~4.21]{KS09}.
\begin{theorem}[Kokilashvili, S. Samko]
\label{th:KS} Let $\Gamma$ be a simple Carleson curve. Suppose
that $p:\Gamma\to(1,\infty)$ is a continuous function satisfying
the Dini-Lipschitz condition {\rm(\ref{eq:Dini-Lipschitz})} and
$w:\Gamma\to[1,\infty]$ is a weight. If there exists a number
$p_0$ such that
\[
1<p_0<\min\limits_{\tau\in\Gamma}p(\tau)
\]
and $M$ is bounded on
$L^{p(\cdot)/(p(\cdot)-p_0)}(\Gamma,w^{-p_0})$, then $S$ is
bounded on $L^{p(\cdot)}(\Gamma,w)$.
\end{theorem}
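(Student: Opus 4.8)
The plan is to derive Theorem~\ref{th:KS} from the classical Coifman--Fefferman weighted norm inequality for the Cauchy singular integral $S$ by means of a Rubio de Francia extrapolation argument. The constraint $p_0<p_*:=\min_{\tau\in\Gamma}p(\tau)$ is precisely what makes the passage to the exponent $p(\cdot)/p_0$ legitimate, while the hypothesis $w\ge1$ only serves to keep the auxiliary spaces below genuine, well-behaved Banach function spaces. We work on $(\Gamma,|d\tau|)$, which is a space of homogeneous type: the Carleson condition gives $|\Gamma(t,2R)|\le 2CR$, while connectedness of $\Gamma$ forces $|\Gamma(t,R)|\ge R$ for $R\le d_t$, so the measure is doubling; and $S$ is a Calder\'on--Zygmund operator on it (its kernel obeys the size bound $\lesssim 1/|\tau-t|\lesssim 1/|\Gamma(t,|\tau-t|)|$ and the usual smoothness estimate, and $S$ is bounded on $L^2(\Gamma)$ by David's theorem). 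In particular the \emph{quantitative} form of the well-known weighted bound holds: there are a constant $c_0=c_0(\Gamma,p_0)$ and an increasing function $\Phi$ with $\int_\Gamma|Sh|^{p_0}v\,|d\tau|\le c_0\Phi\big([v]_{A_{p_0}(\Gamma)}\big)\int_\Gamma|h|^{p_0}v\,|d\tau|$ for every $v\in A_{p_0}(\Gamma)$.

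First I would reduce the claim. Since $\|h\|_{L^{p(\cdot)}(\Gamma,w)}=\|hw\|_{L^{p(\cdot)}(\Gamma)}$, it suffices to prove $\|(Sg)w\|_{L^{p(\cdot)}(\Gamma)}\le C\|gw\|_{L^{p(\cdot)}(\Gamma)}$ for $g\in L^\infty(\Gamma)$, the general case following by density. Set $r(\cdot):=p(\cdot)/p_0$ and $q(\cdot):=p(\cdot)/(p(\cdot)-p_0)$. Since $\Gamma$ is compact and $p$ is continuous, $1<p_*\le\max_{\tau\in\Gamma}p(\tau)<\infty$, and $p_0<p_*$ ensures that $r$ is a genuine variable exponent with $\min_{\tau\in\Gamma}r(\tau)>1$, conjugate exponent $r'(\cdot)=q(\cdot)$, and $q$ bounded strictly between $1$ and $\infty$. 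Using $\|Fw\|_{L^{p(\cdot)}}=\big\||F|^{p_0}w^{p_0}\big\|_{L^{r(\cdot)}}^{1/p_0}$ with $F=Sg$, I reduce to estimating $\big\||Sg|^{p_0}w^{p_0}\big\|_{L^{r(\cdot)}}$. By the duality of variable Lebesgue spaces this norm is comparable to $\sup\big\{\int_\Gamma|Sg|^{p_0}w^{p_0}h\,|d\tau|:h\ge0,\ \|h\|_{L^{q(\cdot)}(\Gamma)}\le1\big\}$; substituting $\phi:=w^{p_0}h$, for which $\|\phi\|_{L^{q(\cdot)}(\Gamma,w^{-p_0})}=\|h\|_{L^{q(\cdot)}(\Gamma)}$, this equals $\sup\big\{\int_\Gamma|Sg|^{p_0}\phi\,|d\tau|:\phi\ge0,\ \|\phi\|_X\le1\big\}$, where $X:=L^{q(\cdot)}(\Gamma,w^{-p_0})=L^{p(\cdot)/(p(\cdot)-p_0)}(\Gamma,w^{-p_0})$.

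Next I would fix such a $\phi$ and run the Rubio de Francia iteration in $X$, on which $M$ is bounded by hypothesis with norm $\|M\|_X$: set $\mathcal{R}\phi:=\sum_{k=0}^\infty(2\|M\|_X)^{-k}M^k\phi$. Then $\phi\le\mathcal{R}\phi$ pointwise, $\|\mathcal{R}\phi\|_X\le2\|\phi\|_X\le2$, and sublinearity of $M$ gives $M(\mathcal{R}\phi)\le2\|M\|_X\mathcal{R}\phi$ a.e., so $\mathcal{R}\phi\in A_1(\Gamma)\subset A_{p_0}(\Gamma)$ with $[\mathcal{R}\phi]_{A_{p_0}(\Gamma)}\le[\mathcal{R}\phi]_{A_1(\Gamma)}\le2\|M\|_X$. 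Using $\phi\le\mathcal{R}\phi$, then the Coifman--Fefferman inequality with the weight $\mathcal{R}\phi$ (whose $A_{p_0}$ constant is at most $2\|M\|_X$, so that $c_1:=c_0\Phi(2\|M\|_X)$ is uniform), then H\"older's inequality in variable Lebesgue spaces with exponents $r(\cdot)$ and $q(\cdot)$, and then $\big\||gw|^{p_0}\big\|_{L^{r(\cdot)}}=\|gw\|_{L^{p(\cdot)}}^{p_0}$ together with $\|w^{-p_0}\mathcal{R}\phi\|_{L^{q(\cdot)}}=\|\mathcal{R}\phi\|_X\le2$, one obtains
\begin{align*}
\int_\Gamma|Sg|^{p_0}\phi\,|d\tau|
&\le\int_\Gamma|Sg|^{p_0}\mathcal{R}\phi\,|d\tau|
\le c_1\int_\Gamma|g|^{p_0}\mathcal{R}\phi\,|d\tau|
=c_1\int_\Gamma|gw|^{p_0}\big(w^{-p_0}\mathcal{R}\phi\big)\,|d\tau|
\\
&\le 2c_1\big\||gw|^{p_0}\big\|_{L^{r(\cdot)}}\big\|w^{-p_0}\mathcal{R}\phi\big\|_{L^{q(\cdot)}}
=2c_1\|gw\|_{L^{p(\cdot)}}^{p_0}\|\mathcal{R}\phi\|_X
\le 4c_1\|gw\|_{L^{p(\cdot)}}^{p_0}.
\end{align*}
Taking the supremum over $\phi$, undoing the duality, and extracting the $p_0$-th root gives $\|(Sg)w\|_{L^{p(\cdot)}}\le C\|gw\|_{L^{p(\cdot)}}$, as required.

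The step I expect to be the main obstacle is the \emph{quantitative} Coifman--Fefferman inequality on $\Gamma$: that the norm of $S$ on $L^{p_0}(\Gamma,v)$ is controlled by a function of $[v]_{A_{p_0}(\Gamma)}$ alone. This uniformity is exactly what makes $c_1$ independent of $\phi$ and allows the extrapolation to close; securing it amounts to treating $(\Gamma,|d\tau|)$ as a space of homogeneous type and invoking the sharp weighted theory of Calder\'on--Zygmund operators there (as developed, e.g., in the monograph of B\"ottcher and Yu.~Karlovich). A secondary technical point is checking that $X=L^{p(\cdot)/(p(\cdot)-p_0)}(\Gamma,w^{-p_0})$ is a Banach function space supporting the Rubio de Francia iteration, and that the variable-exponent duality and H\"older inequalities hold with the stated constants --- all guaranteed by $1<\min_{\tau\in\Gamma}q(\tau)\le\max_{\tau\in\Gamma}q(\tau)<\infty$ together with $w\ge1$.
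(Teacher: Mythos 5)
The paper does not actually prove Theorem~\ref{th:KS}: it is quoted verbatim from Kokilashvili and Samko \cite[Theorem~4.21]{KS09} and used as a black box, so there is no internal argument to measure your proof against. On its own terms, your Rubio de Francia extrapolation scheme is the standard route to statements of this kind and the skeleton is correct: the identity $\|Fw\|_{L^{p(\cdot)}}=\big\||F|^{p_0}w^{p_0}\big\|_{L^{p(\cdot)/p_0}}^{1/p_0}$, the observation that $(p(\cdot)/p_0)'=p(\cdot)/(p(\cdot)-p_0)$ is exactly the exponent in the hypothesis, the iteration $\mathcal{R}\phi$ producing an $A_1(\Gamma)$ majorant with characteristic at most $2\|M\|_X$, and the closing H\"older step all fit together, and $p_0<p_*$ is used precisely where you say it is. The load-bearing external input is the one you flag yourself: a weighted bound for $S$ on $L^{p_0}$ of $(\Gamma,v\,|d\tau|)$ with constant controlled by $[v]_{A_{p_0}(\Gamma)}$ alone. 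This is genuinely available --- $(\Gamma,|d\tau|)$ is a space of homogeneous type, $S$ is a Calder\'on--Zygmund operator on it by David's theorem, and the Coifman--Fefferman/good-$\lambda$ machinery (see \cite{GGKK98}) gives constants monotone in the Muckenhoupt characteristic --- but be aware that it carries most of the analytic weight of the theorem: the ``well-known'' theorem recalled in Section~\ref{sec:Maximal_operator} is purely qualitative, so you must either cite a quantitative version or exploit the fact that only $A_1$ majorants arise here, for which the uniformity is classical. Two small repairs: run the density reduction over $g$ with $gw\in L^\infty(\Gamma)$ rather than $g\in L^\infty(\Gamma)$ so that both sides are finite a priori (this is also where $w\ge 1$ earns its keep, giving $L^{p(\cdot)}(\Gamma,w)\subset L^1(\Gamma)$ so that $Sg$ is defined), and justify that $M^k\phi$ is finite a.e. via the boundedness of $M$ on $X$ rather than via $\phi\in L^1(\Gamma)$, which need not hold when $w$ is unbounded.
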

\subsection{Proof of Theorem~{\rm\ref{th:boundedness}}}
\begin{proof}
(a) This part is proved by analogy with \cite[Theorem~2.1]{Karlovich10-IWOTA08}.
Since the function $p:\Gamma\to(1,\infty)$ is continuous and $\Gamma$ is compact, we
deduce that $\min\limits_{\tau\in\Gamma}p(\tau)>1$. If the inequalities
\[
1/p(t_j)+\beta(W_{t_j}^0\psi_j)<1,\quad j\in\{1,\dots,n\}
\]
are fulfilled, then there exists a number $p_0$ such that
\[
1<p_0<\min_{\tau\in\Gamma}p(\tau)
\]
and
\[
1/p(t_j)+\beta(W_{t_j}^0\psi_j)<1/p_0,\quad j\in\{1,\dots,n\}.
\]
Taking into account Lemma~\ref{le:indices-relations}, we see that
the functions $W_{t_j}(\psi_j^{-p_0})$ are regular and
the latter inequalities are equivalent to
\begin{equation}\label{eq:boundedness-proof-1}
0<1-\frac{p_0}{p(t_j)}-p_0\beta(W_{t_j}^0\psi_j)=
\frac{p(t_j)-p_0}{p(t_j)}+\alpha\big(W_{t_j}^0(\psi_{t_j}^{-p_0})\big),
\quad j\in\{1,\dots,n\}.
\end{equation}
Analogously, the inequalities
\[
0<1/p(t_j)+\alpha(W_{t_j}^0\psi_j),\quad j\in\{1,\dots,n\}
\]
are equivalent to
\begin{equation}\label{eq:boundedness-proof-2}
1>1-\frac{p_0}{p(t_j)}-p_0\alpha(W_{t_j}^0\psi_j)=
\frac{p(t_j)-p_0}{p(t_j)}+\beta\big(W_{t_j}^0(\psi_j^{-p_0})\big),
\quad
j\in\{1,\dots,n\}.
\end{equation}
From inequalities (\ref{eq:boundedness-proof-1})--(\ref{eq:boundedness-proof-2})
and Theorem~\ref{th:boundedness-maximal} it follows that the maximal operator
$M$ is bounded on $L^{p(\cdot)/(p(\cdot)-p_0)}(\Gamma,w^{-p_0})$. To
finish the proof of part (a), it remains to apply Theorem~\ref{th:KS}.

(b) If the operator $S$ is bounded on $L^{p(\cdot)}(\Gamma,w)$,
then from (\ref{eq:Karlovich-necessity-1}) it follows that
\[
0\le 1/p(t_j)+\alpha(V_{t_j}^0w),
\quad
1/p(t_j)+\beta(V_{t_j}^0w)\le 1
\quad\mbox{for all}\quad j\in\{1,\dots,n\}.
\]
Then, by Lemma~\ref{le:V-relations},
\[
0\le 1/p(t_j)+\alpha(V_{t_j}^0\psi_j),
\quad
1/p(t_j)+\beta(V_{t_j}^0\psi_j)\le 1
\quad\mbox{for all}\quad j\in\{1,\dots,n\}.
\]
Applying Lemma~\ref{le:WV-relations} to the above inequalities, we see that
\[
0\le 1/p(t_j)+\alpha(W_{t_j}^0\psi_j),
\quad
1/p(t_j)+\beta(W_{t_j}^0\psi_j)\le 1
\quad\mbox{for all}\quad j\in\{1,\dots,n\}.
\]
Part (b) is proved. The proof of part (c) follows the same lines with inequalities
(\ref{eq:Karlovich-necessity-2}) in place of (\ref{eq:Karlovich-necessity-1}).
\end{proof}
\section{Singular integral operators with $L^\infty$ coefficients}
\label{sec:SIO}
\subsection{Necessary conditions for Fredholmness}
In this section we will suppose that $\Gamma$ is a Carleson Jordan curve,
$p:\Gamma\to(1,\infty)$ is a continuous function, and $w:\Gamma\to[0,\infty]$
is an arbitrary weight (not necessarily of the form (\ref{eq:weight}))
such that $S$ is bounded on $L^{p(\cdot)}(\Gamma,w)$. Under these assumptions,
\[
P:=(I+S)/2,
\quad
Q:=(I-S)/2
\]
are bounded projections on $L^{p(\cdot)}(\Gamma,w)$
(see \cite[Lemma~6.4]{Karlovich03}). The operators of the form $aP+bQ$, where
$a,b \in L^\infty(\Gamma)$, are called \textit{singular integral operators}
(SIOs).
\begin{theorem}\label{th:necessity}
Suppose $a,b\in L^\infty(\Gamma)$. If $aP+bQ$ is Fredholm on $L^{p(\cdot)}(\Gamma,w)$,
then $a^{-1},b^{-1}\in L^\infty(\Gamma)$.
\end{theorem}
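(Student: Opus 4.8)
Throughout write $X:=L^{p(\cdot)}(\Gamma,w)$. Recall the standing hypotheses of this section: $\Gamma$ is a Carleson Jordan curve and $S$ (hence $P,Q$) is bounded on $X$, so $P,Q$ are complementary bounded projections with $P+Q=I$, $PQ=QP=0$; moreover $1<\min_{\tau\in\Gamma}p(\tau)\le\max_{\tau\in\Gamma}p(\tau)<\infty$, the norm of $X$ is absolutely continuous, the dual of $X$ is the associate space $X':=L^{q(\cdot)}(\Gamma,w^{-1})$ with $1/p(\cdot)+1/q(\cdot)=1$, and the constant function $1$ belongs to $X$. The conclusion $a^{-1},b^{-1}\in L^\infty(\Gamma)$ means precisely that $\operatorname{ess\,inf}_{\tau\in\Gamma}|a(\tau)|>0$ and $\operatorname{ess\,inf}_{\tau\in\Gamma}|b(\tau)|>0$, so the plan is to argue by contradiction. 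The crucial observation is that $A:=aP+bQ$ acts on $\operatorname{Im}Q$ as multiplication by $b$ and on $\operatorname{Im}P$ as multiplication by $a$: for $f\in\operatorname{Im}Q$ one has $Pf=0$, $Qf=f$, hence $Af=bf$, and symmetrically $Af=af$ for $f\in\operatorname{Im}P$. Since $A$ is Fredholm it is, in particular, upper semi-Fredholm, so every bounded sequence $(f_n)\subset X$ with $Af_n\to0$ has a norm-convergent subsequence. Therefore it suffices, assuming for contradiction that $\operatorname{ess\,inf}|b|=0$, to build $f_n\in\operatorname{Im}Q$ with $\|f_n\|_X=1$, $\|bf_n\|_X\to0$ and $f_n\rightharpoonup0$ weakly; the weak convergence rules out a convergent subsequence. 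The case $\operatorname{ess\,inf}|a|=0$ will be disposed of by the same construction carried out in $\operatorname{Im}P$.

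Producing this ``peaking sequence'' is the heart of the proof and, I expect, the main obstacle. Let $D_-$ be the exterior component of $\mathbb C\setminus\Gamma$ and $\varphi\colon\mathbb D\to D_-$ a conformal map with $\varphi(0)=\infty$. For each $n$ choose a measurable $E_n\subset\Gamma$ with $0<|E_n|\to0$ and $|b(\tau)|<1/n$ for a.e.\ $\tau\in E_n$ (possible whether or not $b$ vanishes on a set of positive measure, since arc-length measure is non-atomic). Because $\Gamma$ is a rectifiable Jordan curve, harmonic measure on $\Gamma$ and arc-length measure are mutually absolutely continuous (F.\ and M.\ Riesz), so $\widetilde E_n:=\varphi^{-1}(E_n)\subset\mathbb T$ satisfies $0<|\widetilde E_n|\to0$, in particular $|\widetilde E_n|<2\pi$. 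Set $c_n:=\|\chi_{E_n}\|_X>0$ and $\varepsilon_n:=c_n/n$, let $F_n\in H^\infty(\mathbb D)$ be the outer function with boundary modulus $1$ on $\widetilde E_n$ and $\varepsilon_n$ on $\mathbb T\setminus\widetilde E_n$, and put $g_n:=(zF_n)\circ\varphi^{-1}$. Then $g_n$ is bounded and analytic in $D_-$ with value $0$ at $\infty$, hence $Sg_n=-g_n$ on $\Gamma$ and $g_n\in\operatorname{Im}Q\subset X$, while $|g_n|=1$ a.e.\ on $E_n$ and $|g_n|=\varepsilon_n$ a.e.\ on $\Gamma\setminus E_n$. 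Normalising $f_n:=g_n/\|g_n\|_X$ and using $\|g_n\|_X\ge\|\chi_{E_n}\|_X=c_n$ we get $|f_n|\le\varepsilon_n/c_n=1/n$ a.e.\ on $\Gamma\setminus E_n$, whence
\[
\|Af_n\|_X=\|bf_n\|_X\le\tfrac1n\|f_n\|_X+\tfrac{\|b\|_\infty}{n}\,\|1\|_X\longrightarrow0 .
\]
For weak convergence, split $\int_\Gamma f_n\bar h\,|d\tau|$ ($h\in X'$) according to $E_n$ and its complement; Hölder's inequality for the pair $X,X'$ and the bound $|f_n|\le 1/n$ off $E_n$ estimate the two pieces by $Cn^{-1}\|1\|_X\|h\|_{X'}$ and $C\|h\chi_{E_n}\|_{X'}$, both tending to $0$ (the latter by absolute continuity of the norm of $X'$ and $|E_n|\to0$), so $f_n\rightharpoonup0$. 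This contradicts Fredholmness, hence $b^{-1}\in L^\infty(\Gamma)$. Running the identical argument in $\operatorname{Im}P$, with a conformal map $\varphi_+\colon\mathbb D\to D_+$ onto the interior domain and $g_n:=F_n\circ\varphi_+^{-1}$ (no factor $z$, so that $Sg_n=g_n$ and $g_n\in\operatorname{Im}P$), yields $a^{-1}\in L^\infty(\Gamma)$.

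The points requiring care — and where the generality of $\Gamma$ and of the weighted variable-exponent space makes itself felt — are: the identification of $\operatorname{Im}P$ and $\operatorname{Im}Q$ with boundary values of bounded analytic functions on $D_+$, respectively on $D_-$ with value $0$ at infinity (standard via the Plemelj formulas; cf.\ \cite{Karlovich03}); the transfer of nontangential boundary data along the conformal maps, which relies on Carathéodory's theorem and on the mutual absolute continuity of harmonic and arc-length measures on rectifiable Jordan curves; and the structural facts $1\in X$ and $X'=L^{q(\cdot)}(\Gamma,w^{-1})$ with absolutely continuous norm, all of which follow from the boundedness of $S$ on $X$. The resulting argument is the verbatim analogue, for Nakano spaces with weights, of the corresponding necessity result for standard weighted Lebesgue spaces in \cite[Chap.~7--8]{GK92}.
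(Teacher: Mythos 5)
The paper does not actually prove this theorem: its entire ``proof'' is the remark that the result can be established in the same way as \cite[Theorem~5.4]{Karlovich07}, i.e.\ by the classical Gohberg--Krupnik necessity argument transplanted to weighted variable Lebesgue spaces. Your argument is of the same classical flavour --- test the upper semi-Fredholm property against a normalized weakly null sequence lying in ${\rm Im}\,Q$ (resp.\ ${\rm Im}\,P$), on which $aP+bQ$ acts as multiplication by $b$ (resp.\ $a$) --- but your construction of the test functions, via outer functions on $\mathbb{D}$ with prescribed boundary modulus transplanted to $D^{\pm}$ by conformal mapping, is a legitimate self-contained alternative to the rational test functions used in the cited treatments. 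One thing your route buys is that the peaking sets $E_n$ may be arbitrary measurable sets of small measure on which $|b|$ is small; no localization at a single point of $\Gamma$ is needed. The functional-analytic skeleton (upper semi-Fredholmness forces bounded sequences with $Af_n\to 0$ to have convergent subsequences; weak nullity plus $\|f_n\|=1$ then gives the contradiction) is correct, as are the norm estimates $\|bf_n\|_X\le n^{-1}+n^{-1}\|b\|_\infty\|1\|_X$ and the weak convergence via H\"older and absolute continuity of the norm of $X'$.

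I regard the proof as correct, but three of your steps carry real weight and are only asserted: (i) $1\in X$ (equivalently $w\in L^{p(\cdot)}(\Gamma)$) and $1/w\in L^{q(\cdot)}(\Gamma)$ do follow from the boundedness of $S$, but this is itself a nontrivial necessity result (Theorem~\ref{th:Karlovich-necessity} / \cite{Karlovich03}); without it $\chi_{E_n}\in X$ and your H\"older estimates are unavailable. (ii) The identity $Qg_n=g_n$ needs the Cauchy formula for $g_n$ on the exterior domain together with the consistency of the abstract operator $S$ on $X$ with the principal-value integral on $L^\infty(\Gamma)\subset L^2(\Gamma)$; the former holds because $H^\infty(D^-)\subset E^1(D^-)$ for every rectifiable Jordan curve (so, contrary to what one might fear, no Smirnov hypothesis on $D^\pm$ is required), but this deserves a sentence. (iii) The transfer of the prescribed boundary modulus from $\mathbb{T}$ to $\Gamma$ uses mutual absolute continuity of harmonic and arc-length measures in \emph{both} directions, which is where rectifiability of $\Gamma$ enters. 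All three facts are true and standard, so there is no genuine gap, but at the level of detail of your write-up they are the load-bearing steps and should be pinned to references.
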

This result can be proved in the same way as \cite[Theorem~5.4]{Karlovich07}
where the case of Khvedelidze weights (\ref{eq:Khvedelidze-weight}) was considered.
\subsection{The local principle}
Two functions $a,b\in L^\infty(\Gamma)$ are said to be locally
equivalent at a point $t\in\Gamma$ if
\[
\inf\big\{\|(a-b)c\|_\infty\ :\ c\in C(\Gamma),\ c(t)=1\big\}=0.
\]
\begin{theorem}\label{th:local_principle}
Suppose  $a\in L^\infty(\Gamma)$ and for each $t\in\Gamma$ there
exists a function $a_t\in L^\infty(\Gamma)$ which is locally
equivalent to $a$ at $t$. If the operators $a_tP+Q$ are Fredholm
on $L^{p(\cdot)}(\Gamma,w)$ for all $t\in\Gamma$, then $aP+Q$ is
Fredholm on $L^{p(\cdot)}(\Gamma,w)$.
\end{theorem}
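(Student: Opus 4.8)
The plan is to prove the local principle (Theorem~\ref{th:local_principle}) by the classical Gohberg--Krupnik/Simonenko localization scheme adapted to the algebra of singular integral operators on $L^{p(\cdot)}(\Gamma,w)$. The key structural fact I would use is that, modulo compact operators, the algebra generated by $P$, $Q$ and multiplications by $L^\infty$ functions has a large center: if $c\in C(\Gamma)$ then the commutators $[cI,P]=cP-Pc$ are compact on $L^{p(\cdot)}(\Gamma,w)$. This is the variable-exponent analogue of the standard fact on weighted Lebesgue spaces; it follows from the boundedness of $S$ together with the compactness of the operator with continuous kernel $c(\tau)-c(t)$ over $\tau-t$ (see the companion results quoted from \cite{Karlovich03} and the analogous treatment in \cite[Chap.~7-8]{GK92}), and I would cite it as an already-available ingredient of Section~\ref{sec:SIO}. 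Granting this, the images $\pi(cI)$ in the Calkin algebra $\mathcal{B}/\mathcal{K}$ are central in the closed subalgebra $\mathcal{A}$ generated by $\pi(P)$ and $\pi(aI)$, $a\in L^\infty(\Gamma)$.

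First I would fix, for each $t\in\Gamma$, the localizing class: the maximal ideal of $C(\Gamma)$ corresponding to $t$ generates (via $\pi(cI)$) a localizing class $\mathcal{M}_t$ in the sense of Gohberg--Krupnik's local principle. Because $\bigcap_{t\in\Gamma}\{c\in C(\Gamma):c(t)=0\}=\{0\}$ and $C(\Gamma)$ separates points, the system $\{\mathcal{M}_t\}_{t\in\Gamma}$ is \emph{covering}. Then I would invoke the abstract Gohberg--Krupnik local principle: an element of $\mathcal{A}$ is invertible in $\mathcal{B}/\mathcal{K}$ (equivalently, the operator is Fredholm) if and only if it is $\mathcal{M}_t$-invertible for every $t\in\Gamma$. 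So it suffices to show that $\pi(aP+Q)$ is $\mathcal{M}_t$-invertible for each $t$.

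Next, the hypothesis gives for each $t$ a function $a_t\in L^\infty(\Gamma)$ locally equivalent to $a$ at $t$, meaning $\inf\{\|(a-a_t)c\|_\infty:c\in C(\Gamma),\,c(t)=1\}=0$. The point is that local equivalence at $t$ translates exactly into $\mathcal{M}_t$-equivalence of $\pi(aP+Q)$ and $\pi(a_tP+Q)$: writing $(a-a_t)P=(a-a_t)cP+(a-a_t)(1-c)P$ and using $\|(a-a_t)cI\|\le\|(a-a_t)c\|_\infty$ together with $\pi((1-c)I)\in\mathcal{M}_t$-related annihilation (since $(1-c)(t)=0$), one gets $\|\pi((a-a_t)P)\,\mathcal{M}_t\|=0$, hence $\pi(aP+Q)\sim_{\mathcal{M}_t}\pi(a_tP+Q)$. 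By assumption $a_tP+Q$ is Fredholm on $L^{p(\cdot)}(\Gamma,w)$, so $\pi(a_tP+Q)$ is invertible in $\mathcal{B}/\mathcal{K}$, hence in particular $\mathcal{M}_t$-invertible; therefore $\pi(aP+Q)$ is $\mathcal{M}_t$-invertible. Since this holds for every $t\in\Gamma$ and the family is covering, the local principle yields that $aP+Q$ is Fredholm.

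I expect the main obstacle to be the verification that all the hypotheses of the abstract local principle genuinely hold in this setting, rather than the scheme itself: namely, that $\pi(cI)$ is central modulo compacts in the relevant operator algebra on the variable-exponent weighted space (the commutator compactness $[cI,S]\in\mathcal{K}$), and that the localizing classes $\mathcal{M}_t$ built from $\{c\in C(\Gamma):c(t)=1\}$ are indeed a covering system. Both are standard on weighted $L^p$ and carry over verbatim once $S$ is bounded and $P,Q$ are bounded projections (as guaranteed in Section~\ref{sec:SIO}); I would state the commutator compactness as a cited lemma and then let the Gohberg--Krupnik machinery do the rest, mirroring \cite[Chap.~7]{BK97} and the treatment in \cite{Karlovich09-IWOTA07,Karlovich10-IWOTA08}. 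The only genuinely new content is checking that nothing about the variable exponent $p(\cdot)$ or the weight $w$ of the form \eqref{eq:weight} breaks these ingredients, and that is immediate from the boundedness results already established.
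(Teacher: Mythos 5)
Your localization scheme is correct and is essentially the argument the paper relies on: the paper offers no proof of its own here, but simply derives the statement from \cite[Theorem~6.13]{Karlovich03}, whose proof is precisely the Gohberg--Krupnik localization you outline (compactness of the commutators $[cI,S]$ for $c\in C(\Gamma)$, a covering system of localizing classes indexed by the points of $\Gamma$, and the translation of local equivalence at $t$ into local equivalence of the cosets). You correctly isolate the one ingredient that must be checked in the weighted variable-exponent setting --- the compactness of $[cI,S]$ on $L^{p(\cdot)}(\Gamma,w)$ --- and citing it, as you propose, is exactly what the paper does.
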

For weighted Lebesgue spaces this theorem is known as Simonenko's
local principle \cite{Simonenko65}. It follows from
\cite[Theorem~6.13]{Karlovich03}.
\subsection{Wiener-Hopf factorization}
The curve $\Gamma$ divides the complex plane $\mathbb{C}$ into the bounded
simply connected domain $D^+$ and the unbounded domain $D^-$. Recall that
without loss of generality we assumed that $0\in D^+$.
We say that a function
$a\in L^\infty(\Gamma)$ admits a \textit{Wiener-Hopf factorization on}
$L^{p(\cdot)}(\Gamma,w)$ if $a^{-1}\in L^\infty(\Gamma)$ and $a$ can be written
in the form
\begin{equation}\label{eq:WH}
a(t)=a_-(t)t^\kappa a_+(t)
\quad\mbox{a.e. on}\ \Gamma,
\end{equation}
where $\kappa\in\Z$, and the factors $a_\pm$ enjoy the following properties:
\[
\begin{array}{lll}
({\rm i}) &
a_-\in QL^{p(\cdot)}(\Gamma,w)\stackrel{\cdot}{+}\mathbb{C}, &
a_-^{-1}\in QL^{q(\cdot)}(\Gamma,1/w)\stackrel{\cdot}{+}\mathbb{C},
\\[2mm]
&a_+\in PL^{q(\cdot)}(\Gamma,1/w), &
a_+^{-1}\in PL^{p(\cdot)}(\Gamma,w),
\\[2mm]
({\rm ii}) & \mbox{the operator $a_+^{-1}Sa_+I$} &
\mbox{is bounded on $L^{p(\cdot)}(\Gamma,w)$},
\end{array}
\]
where $1/p(t)+1/q(t)=1$ for all $t\in\Gamma$.
One can prove that the number $\kappa$ is uniquely determined.
\begin{theorem}\label{th:factorization}
A function $a\in L^\infty(\Gamma)$ admits a Wiener-Hopf factorization
{\rm(\ref{eq:WH})} on $L^{p(\cdot)}(\Gamma,w)$ if and only if the operator $aP+Q$
is Fredholm on $L^{p(\cdot)}(\Gamma,w)$.
\end{theorem}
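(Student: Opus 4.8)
The plan is to prove Theorem~\ref{th:factorization} by reducing it to the classical equivalence between Wiener--Hopf factorization and Fredholmness, which is known for weighted Lebesgue spaces and whose proof transfers verbatim to weighted Nakano spaces once the operator $S$ (hence $P$ and $Q$) is bounded. The key structural fact one exploits is that multiplication by $t^{\kappa}$ corresponds, via the factorization $a=a_-t^{\kappa}a_+$, to the operator $t^{\kappa}P+Q$ being Fredholm with index $-\kappa$, while the ``outer'' factors $a_\pm$ give rise to invertible operators.

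First I would establish the \emph{sufficiency} direction: assume $a=a_-t^{\kappa}a_+$ is a Wiener--Hopf factorization on $L^{p(\cdot)}(\Gamma,w)$. The factor $a_+$ satisfies $a_+\in PL^{q(\cdot)}(\Gamma,1/w)$ and $a_+^{-1}\in PL^{p(\cdot)}(\Gamma,w)$, and by property (ii) the operator $a_+^{-1}Sa_+I$ is bounded on $L^{p(\cdot)}(\Gamma,w)$; from these facts one shows that $a_+P a_+^{-1}I = Pa_+P a_+^{-1}I + \tfrac12(I - a_+^{-1}Sa_+I)$ acts as a bounded operator and that $a_+ P + a_+^{-1}Q$ (equivalently, after rearrangement, the operator realizing multiplication by $a_+$ on the ``plus'' space) is invertible. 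An analogous statement holds for $a_-$ using the mapping properties $a_-\in QL^{p(\cdot)}(\Gamma,w)\,\dot{+}\,\mathbb{C}$ and $a_-^{-1}\in QL^{q(\cdot)}(\Gamma,1/w)\,\dot{+}\,\mathbb{C}$: one checks that $a_- P + Q$ and $P + a_-^{-1}Q$ are inverse to one another modulo the one-dimensional subtleties coming from the constants. Writing $aP+Q = (a_-I)(t^{\kappa}P+Q)(a_+ P + a_+^{-1}Q) + K$ for a suitable compact (indeed finite-rank, arising from the $\mathbb{C}$-summands) perturbation $K$, and using that $t^{\kappa}P+Q$ is Fredholm (its kernel and cokernel are finite-dimensional, being spanned by $\{t^j\}$ in the appropriate range), we conclude that $aP+Q$ is a product of two invertible operators and one Fredholm operator, plus a compact perturbation, hence Fredholm.

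For the \emph{necessity} direction, suppose $aP+Q$ is Fredholm. By Theorem~\ref{th:necessity} (with $b=1$), $a^{-1}\in L^\infty(\Gamma)$. One then constructs the factorization by a standard Riemann--Hilbert argument: the Fredholmness of $aP+Q$ yields, via the regularizer, a function $a_+$ with $a_+^{\pm 1}$ in the required one-sided subspaces and with $a_+^{-1}Sa_+I$ bounded (property (ii)), obtained essentially as $a_+ = $ (a suitable Cauchy-type integral transform of) an element spanning a complement of the image; the ``minus'' factor $a_-$ and the integer $\kappa$ are read off from $a\,a_+^{-1}$, whose logarithm (after removing finitely many zeros/poles accounting for $\kappa$) extends analytically to $D^-$ with the correct decay at infinity. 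Uniqueness of $\kappa$ follows by comparing two factorizations $a_-t^{\kappa}a_+ = \tilde a_-t^{\tilde\kappa}\tilde a_+$: the quotient $t^{\kappa-\tilde\kappa} = (a_-^{-1}\tilde a_-)(\tilde a_+ a_+^{-1})$ would otherwise be a rational function with a single zero or pole of order $|\kappa-\tilde\kappa|$ admitting a factorization with trivial index, which is impossible unless $\kappa=\tilde\kappa$; this is exactly where the index of $t^{\kappa}P+Q$ being $-\kappa$ is used.

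The main obstacle I anticipate is property (ii) --- the boundedness of $a_+^{-1}Sa_+I$ --- both in checking it holds for the factor one constructs and in using it correctly in the sufficiency direction. In the weighted Lebesgue setting this is handled by H\"older's inequality together with the fact that $a_+$ and $a_+^{-1}$ land in \emph{complementary} weighted spaces ($L^{q(\cdot)}(\Gamma,1/w)$ and $L^{p(\cdot)}(\Gamma,w)$), so that $a_+ f\cdot a_+^{-1}g$ is integrable whenever $f\in L^{p(\cdot)}(\Gamma,w)$ and $g\in L^{q(\cdot)}(\Gamma,1/w)$; the same H\"older duality between Nakano spaces and their associate spaces is available here (it is part of the standard Musielak--Orlicz machinery and does not require $p$ to be constant), so the Lebesgue-space proof in \cite[Chap.~7--8]{GK92} carries over with only notational changes. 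The one genuinely new point to verify is that the Dini--Lipschitz regularity of $p$ and the boundedness of $S$ on $L^{p(\cdot)}(\Gamma,w)$ guarantee $S$ is also bounded on the associate space $L^{q(\cdot)}(\Gamma,1/w)$, which is needed to make the ``minus'' factor argument symmetric; this follows because $q$ inherits the Dini--Lipschitz condition from $p$ and the associate space of $L^{p(\cdot)}(\Gamma,w)$ is $L^{q(\cdot)}(\Gamma,1/w)$, on which $S$ is the (bounded) adjoint of $S$ up to sign.
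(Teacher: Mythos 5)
The paper itself offers no proof of this theorem --- it is attributed to Simonenko and quoted from \cite[Theorem~6.14]{Karlovich03} --- so you were necessarily reconstructing the classical argument, and your outline does follow the right general route (factorization $\Rightarrow$ Fredholmness via an operator product; Fredholmness $\Rightarrow$ factorization via solving the homogeneous Riemann problem). However, your sufficiency argument contains a concrete error and glosses over the one point that makes the theorem delicate. First, the factors $a_\pm$ are in general \emph{unbounded} functions: they only lie in the Hardy-type subspaces $PL^{q(\cdot)}(\Gamma,1/w)$, $QL^{p(\cdot)}(\Gamma,w)\stackrel{\cdot}{+}\C$, etc., so $a_-I$ and $a_+P+a_+^{-1}Q$ are not bounded operators on $L^{p(\cdot)}(\Gamma,w)$ and cannot be declared invertible; the entire purpose of condition (ii) is that certain \emph{specific} compositions, such as $a_+^{-1}Pa_+I$, are nevertheless bounded. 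Second, even at the formal level your identity is wrong. Using the analyticity relations $Pa_+^{\pm1}P=a_+^{\pm1}P$, $Qa_+^{\pm1}P=0$, $Qa_-^{\pm1}Q=a_-^{\pm1}Q$, $Pa_-^{\pm1}Q=0$, the correct factorization of the operator is
\[
aP+Q \;=\; a_-\,\bigl(t^{\kappa}P+Q\bigr)\,\bigl(a_+P+a_-^{-1}Q\bigr),
\]
with $a_-^{-1}$, not $a_+^{-1}$, in the last factor. With your choice $a_+^{-1}Q$ the terms $Pa_+^{-1}Q$ and $Qa_+^{-1}Q$ do not simplify (a plus function times a minus function is neither), and the product evaluates to $aP+a_-t^{\kappa}P a_+^{-1}Q+a_-Qa_+^{-1}Q$ rather than $aP+Q$; no compact perturbation $K$ repairs this.

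The rigorous argument never inverts the unbounded middle factors: it writes down the candidate regularizer directly, e.g.\ for $\kappa=0$ the exact inverse $B=a_+^{-1}Pa_-^{-1}I+a_-Qa_-^{-1}I$, observes that each summand is bounded precisely because of (ii) (for instance $a_+^{-1}Pa_-^{-1}I=a_+^{-1}Pa_+\cdot a^{-1}I$ with $a^{-1}\in L^\infty$), and for general $\kappa$ checks that $B(aP+Q)-I$ and $(aP+Q)B-I$ are finite rank, with ranks governed by $t^{\kappa}P+Q$. Your necessity sketch points in the standard direction (construct $a_+$ from solutions of $(aP+Q)\varphi=1$ and of the transposed equation after normalizing the index to zero), but it is too vague to assess: verifying that the constructed factors land in the stated one-sided subspaces and satisfy (ii) is where essentially all of the work in \cite[Theorem~6.14]{Karlovich03} lies, and your proposal does not indicate how that would be done. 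Your closing remarks about H\"older duality between $L^{p(\cdot)}(\Gamma,w)$ and $L^{q(\cdot)}(\Gamma,1/w)$ are correct and relevant, but they do not by themselves close these gaps.
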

This theorem goes back to Simonenko \cite{Simonenko64,Simonenko68}.
For more about this topic we refer to \cite[Section~6.12]{BK97},
\cite[Section~5.5]{BS06}, \cite[Section~8.3]{GK92}
in the case of weighted Lebesgue spaces. Theorem~\ref{th:factorization} follows
from \cite[Theorem~6.14]{Karlovich03}.
\section{Singular integral operators with $PC$ coefficients}
\label{sec:SIO-PC}
\subsection{Indicator functions}\label{subsection:Indicator_functions}
Combining Theorems~\ref{th:BK-W-regularity} and \ref{th:Karlovich-necessity}
with Lemma~\ref{le:WV-relations}, we arrive at the following.
\begin{lemma}\label{le:indicator-existence}
Let $\Gamma$ be a Carleson Jordan curve, $p:\Gamma\to(1,\infty)$
be a continuous function satisfying the Dini-Lipschitz condition
{\rm(\ref{eq:Dini-Lipschitz})}, and $w:\Gamma\to[0,\infty]$ be
a weight such that the operator $S$ is bounded on the weighted Nakano space
$L^{p(\cdot)}(\Gamma,w)$. Then, for every $x\in\R$ and every $t\in\Gamma$,
the function $V_t^0(\eta_t^xw)$ is regular and submultiplicative.
\end{lemma}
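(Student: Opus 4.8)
The plan is to recognize that this lemma is essentially a matching-of-hypotheses statement: all the analytic content already sits in Theorems~\ref{th:BK-W-regularity} and~\ref{th:Karlovich-necessity} and in Lemma~\ref{le:WV-estimates}, and what remains is to feed those results the right inputs. First I would apply Theorem~\ref{th:Karlovich-necessity}: from the boundedness of $S$ on $L^{p(\cdot)}(\Gamma,w)$ together with the Dini--Lipschitz condition on $p$, one reads off not only that $\Gamma$ is a Carleson curve (already assumed) but, crucially, that $\log w\in BMO(\Gamma,t)$ for every $t\in\Gamma$.

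Next I would fix $t\in\Gamma$ and $x\in\R$ and check that $\eta_t^x$ is a legitimate input for the $W_t$-calculus. Since $\Gamma$ is a Jordan curve, $\Gamma\setminus\{t\}$ is connected, so a continuous branch of $\arg(\tau-t)$ is available there; hence $\eta_t$, and therefore $\eta_t^x$, is continuous and strictly positive on $\Gamma\setminus\{t\}$. As $\Gamma$ is in particular a simple Carleson curve, Theorem~\ref{th:BK-W-regularity} applies and yields that $W_t(\eta_t^x)$ is regular and submultiplicative (the explicit formulas for its indices in terms of the spirality indices $\delta_t^\pm$ are not needed here).

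The final step is to invoke Lemma~\ref{le:WV-estimates} with $\psi:=\eta_t^x$ and the weight $w$. Its hypotheses are exactly that $\psi$ is continuous on $\Gamma\setminus\{t\}$ with $W_t\psi$ regular (established in the previous step) and that $\log w\in BMO(\Gamma,t)$ (established in the first step), and its conclusion is precisely that $V_t^0(\psi w)=V_t^0(\eta_t^x w)$ is regular and submultiplicative. Since $t$ and $x$ were arbitrary, the lemma follows. (One could instead use Lemma~\ref{le:WV-relations} with Theorem~\ref{th:BK-W-regularity} to obtain regularity and submultiplicativity of $V_t^0(\eta_t^x)$ and then merge this with the regularity of $V_t^0 w$ from Theorem~\ref{th:Karlovich-necessity}, using that $\log$ of a product splits the logarithmic mean defining $H_{\cdot,t}(R_1,R_2)$; the route via Lemma~\ref{le:WV-estimates} is cleaner and delivers submultiplicativity of the product directly.)

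I do not expect a genuine obstacle here. The one subtlety worth flagging is that $\eta_t^x w$ is in general only measurable, not continuous on $\Gamma\setminus\{t\}$, because $w$ is merely a weight; consequently one cannot apply $W_t$-type results to $\eta_t^x w$ itself, and it is essential to pass through the $V_t$-machinery, which is tailored to weights whose logarithm lies in $BMO(\Gamma,t)$. This is exactly the role played by Theorem~\ref{th:Karlovich-necessity} in supplying $\log w\in BMO(\Gamma,t)$.
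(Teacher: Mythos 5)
Your proposal is correct and follows essentially the same route as the paper, which derives the lemma by combining Theorem~\ref{th:Karlovich-necessity} (to get $\log w\in BMO(\Gamma,t)$) and Theorem~\ref{th:BK-W-regularity} (regularity of $W_t(\eta_t^x)$) with the $V_t^0(\psi w)$ machinery; you correctly identify Lemma~\ref{le:WV-estimates} as the tool that actually delivers the regularity and submultiplicativity of the product $V_t^0(\eta_t^x w)$, which is exactly what the paper uses in the analogous argument for the subsequent lemma. Your closing remark on why one must route the (merely measurable) weight $w$ through the $V_t$-calculus rather than the $W_t$-calculus is a correct and worthwhile observation.
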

The above lemma says that the functions
\[
\alpha_t(x):=\alpha\big(V_t^0(\eta_t^xw)\big),
\quad
\beta_t(x):=\beta\big(V_t^0(\eta_t^x w)\big)
\quad
(x\in\R)
\]
are well-defined
for every $t\in\Gamma$.
The shape of these functions can be described with the aid
of the following theorem.
\begin{theorem}\label{th:indicator-shape}
Let $\Gamma$ be a Carleson Jordan curve, $p:\Gamma\to(1,\infty)$ be a continuous
function satisfying the Dini-Lipschitz condition {\rm(\ref{eq:Dini-Lipschitz})},
$w:\Gamma\to[0,\infty]$ be a weight such that the operator $S$ is bounded on
the weighted Nakano space $L^{p(\cdot)}(\Gamma,w)$, and $t\in\Gamma$.
Then the functions $\alpha_t$ and $\beta_t$ enjoy the following
properties:
\begin{enumerate}
\item[{\rm(a)}]
$-\infty<\alpha_t(x)\le\beta_t(x)<+\infty$ for all $x\in\R$;

\item[{\rm(b)}]
$0<1/p(t)+\alpha_t(0)\le 1/p(t)+\beta_t(0)<1$;

\item[{\rm(c)}]
$\alpha_t$ is concave and $\beta_t$ is convex;

\item[{\rm(d)}]
$\alpha_t(x)$ and $\beta_t(x)$ have asymptotes as $x\to\pm\infty$ and the convex
regions
\[
\big\{x+iy\in\C:y<\alpha_t(x)\big\}
\quad\mbox{and}\quad
\big\{x+iy\in\C:y>\beta_t(x)\big\}
\]
may be separated by parallels to each of these asymptotes; to be more precise,
there exist real numbers $\mu_t^-,\mu_t^+,\nu_t^-,\nu_t^+$ such that
\[
0<1/p(t)+\mu_t^-\le 1/p(t)+\nu_t^-<1,
\quad
0<1/p(t)+\mu_t^+\le 1/p(t)+\nu_t^+<1
\]
and
\begin{eqnarray*}
\beta_t(x)=\nu_t^++\delta_t^+x+o(1)
&\mbox{as}& x\to+\infty,
\\
\beta_t(x)=\nu_t^-+\delta_t^-x+o(1)
&\mbox{as}& x\to-\infty,
\\
\alpha_t(x)=\mu_t^++\delta_t^+x+o(1)
&\mbox{as}& x\to+\infty,
\\
\alpha_t(x)=\mu_t^-+\delta_t^-x+o(1)
&\mbox{as}& x\to-\infty.
\end{eqnarray*}
\end{enumerate}
\end{theorem}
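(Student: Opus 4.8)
The plan is to establish (a)--(d) in order, using the auxiliary material of Section~\ref{sec:Auxiliary_results} and Theorem~\ref{th:Karlovich-necessity}. Parts (a) and (b) are essentially immediate. Since $S$ is bounded on $L^{p(\cdot)}(\Gamma,w)$, Lemma~\ref{le:indicator-existence} tells us that $V_t^0(\eta_t^x w)$ is regular and submultiplicative for each $x\in\R$, so Theorem~\ref{th:indices} yields $-\infty<\alpha_t(x)\le\beta_t(x)<+\infty$; this is (a). For (b) I would set $x=0$: then $\eta_t^0\equiv 1$, hence $\alpha_t(0)=\alpha(V_t^0 w)$ and $\beta_t(0)=\beta(V_t^0 w)$, and since $\Gamma$ is a Carleson Jordan curve the strict inequalities (\ref{eq:Karlovich-necessity-2}) of Theorem~\ref{th:Karlovich-necessity} read precisely $0<1/p(t)+\alpha_t(0)$ and $1/p(t)+\beta_t(0)<1$; combined with (a) this is (b).

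For part (c) the decisive observation is that $H_{\cdot,t}(R_1,R_2)$ is multiplicative in the weight: from $\log(\eta_t^x w)=x\log\eta_t+\log w$ and the linearity of $\Delta_t(\cdot,R)$ one gets $\log H_{\eta_t^x w,t}(R_1,R_2)=x\log H_{\eta_t,t}(R_1,R_2)+\log H_{w,t}(R_1,R_2)$, which is affine in $x$ for each fixed pair $R_1,R_2$. Hence, for each fixed $r$, the map $x\mapsto\log(V_t^0(\eta_t^x w))(r)$ is a $\limsup$ over $R\to 0$ of affine functions of $x$, and therefore convex in $x$. By Theorem~\ref{th:indices},
\[
\alpha_t(x)=\lim_{r\to 0}\frac{\log(V_t^0(\eta_t^x w))(r)}{\log r},\qquad
\beta_t(x)=\lim_{r\to\infty}\frac{\log(V_t^0(\eta_t^x w))(r)}{\log r}.
\]
Thus $\alpha_t$ is a pointwise limit of the convex-in-$x$ functions $\log(V_t^0(\eta_t^x w))(r)$ divided by the negative numbers $\log r$ $(0<r<1)$, i.e.\ a limit of concave functions, hence concave; and $\beta_t$ is a limit of convex functions (division by $\log r>0$), hence convex. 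This gives (c).

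Part (d) carries the weight of the argument. First I would apply Lemma~\ref{le:WV-estimates} with $\psi=\eta_t^x$, whose $W_t$ is regular by Theorem~\ref{th:BK-W-regularity}, and substitute the explicit values $\alpha(W_t\eta_t^x)=\min\{\delta_t^-x,\delta_t^+x\}$ and $\beta(W_t\eta_t^x)=\max\{\delta_t^-x,\delta_t^+x\}$ from that theorem. Using $\alpha(V_t^0 w)=\alpha_t(0)$ and $\beta(V_t^0 w)=\beta_t(0)$, this sandwiches, separately on the half-lines $x\ge 0$ and $x\le 0$, the graphs of both $\alpha_t$ and $\beta_t$ between two parallel lines (of slope $\delta_t^+$ or $\delta_t^-$ according to the sign of $x$) whose vertical distance is $\beta_t(0)-\alpha_t(0)$; in particular each of $\alpha_t(x)-\delta_t^\pm x$ and $\beta_t(x)-\delta_t^\pm x$ stays inside $[\alpha_t(0),\beta_t(0)]$ on the relevant half-line. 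Combining this with (c), I would invoke the elementary fact that a concave (resp.\ convex) function trapped between two parallel lines on a half-line has monotone, bounded difference-slopes there and hence converges to a line; this produces the four linear asymptotes and the constants $\mu_t^\pm,\nu_t^\pm$, all lying in $[\alpha_t(0),\beta_t(0)]$, so part (b) gives $0<1/p(t)+\mu_t^\pm$ and $1/p(t)+\nu_t^\pm<1$. It remains to get the ordering $\mu_t^\pm\le\nu_t^\pm$ and the separation statement; here I would use that $\{y<\alpha_t(x)\}$ and $\{y>\beta_t(x)\}$ are disjoint convex sets (as $\alpha_t\le\beta_t$) and that, by the asymptotics just found, any separating line has slope in $[\delta_t^-,\delta_t^+]$, then check by a finer comparison of $\alpha_t$ and $\beta_t$ near each asymptotic direction that the endpoints $\delta_t^-$ and $\delta_t^+$ are themselves admissible separating slopes, following the pattern of \cite[Chapter~3]{BK97} (cf.\ \cite[Proposition~3.1]{BK97}).

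The step I expect to be the main obstacle is exactly this last block of (d): Lemma~\ref{le:WV-estimates} by itself yields only \emph{boundedness} of $\alpha_t(x)-\delta_t^\pm x$ and $\beta_t(x)-\delta_t^\pm x$ on half-lines, and turning this into the genuine existence of linear asymptotes (done via the concavity/convexity of (c)), and then into the precise ordering of the asymptote constants and the separation-by-parallels property, requires a careful, if elementary, convexity argument comparing the limiting constants of $\alpha_t$ and $\beta_t$. Everything else in the statement reduces directly to the cited regularity, submultiplicativity and index results.
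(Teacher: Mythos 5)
Your parts (a) and (b) coincide with the paper's proof (Lemma~\ref{le:indicator-existence} plus Theorem~\ref{th:indices} for (a); the Jordan-curve inequalities (\ref{eq:Karlovich-necessity-2}) of Theorem~\ref{th:Karlovich-necessity} for (b)), and your argument for (c) is a correct reconstruction of what the paper merely delegates to \cite{BK97}: $\log H_{\eta_t^xw,t}(R_1,R_2)$ is affine in $x$, a decreasing limit of suprema of affine functions is convex, and dividing by $\log r$ with $r<1$ (resp.\ $r>1$) and letting $r\to0$ (resp.\ $r\to\infty$) via Theorem~\ref{th:indices} gives concavity of $\alpha_t$ (resp.\ convexity of $\beta_t$). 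The paper's own proof of (c) and (d) consists entirely of the remark that the arguments of \cite{BK97} (Proposition~3.20 and Theorem~3.31) carry over verbatim once Theorem~\ref{th:Karlovich-necessity} makes Lemma~\ref{le:WV-estimates} applicable, so in (d) you are attempting more than the paper records.

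In (d), however, there is a concrete error. Lemma~\ref{le:WV-estimates} with $\psi=\eta_t^x$ does \emph{not} trap $\alpha_t$ and $\beta_t$ between two parallel lines of a common slope on each half-line. For $x\ge0$ one has $\alpha(W_t\eta_t^x)=\delta_t^-x$ and $\beta(W_t\eta_t^x)=\delta_t^+x$, so the lemma yields $\alpha_t(0)+\delta_t^-x\le\alpha_t(x)\le\beta_t(0)+\delta_t^-x$ but $\alpha_t(0)+\delta_t^+x\le\beta_t(x)\le\beta_t(0)+\delta_t^+x$: the two functions are controlled by lines of \emph{different} slopes, and $\alpha_t(x)-\delta_t^+x\to-\infty$ whenever $\delta_t^-<\delta_t^+$ (take $w\equiv1$, for which $\alpha_t(x)=\min\{\delta_t^-x,\delta_t^+x\}$ by Lemma~\ref{le:WV-relations} and Theorem~\ref{th:BK-W-regularity}). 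Hence the asymptote of the concave function $\alpha_t$ at $+\infty$ has slope $\delta_t^-$ and at $-\infty$ has slope $\delta_t^+$ --- as concavity itself forces, the asymptotic slope of a concave function at $+\infty$ being at most that at $-\infty$ --- so your bookkeeping, carried through literally, would assign the wrong slopes. (The same example shows that the two $\alpha_t$-asymptotics in the displayed statement must be read with the roles of $\delta_t^\pm$ interchanged, in accordance with \cite{BK97}.) Moreover, the ordering $\mu_t^\pm\le\nu_t^\pm$ and the separation by parallels do not follow from boundedness plus convexity alone: they need the cross ($\min$/$\max$) parts of Lemma~\ref{le:WV-estimates} applied to the shifted weights $\eta_t^xw$, which give $\beta_t(x+y)\ge\alpha_t(x)+\delta_t^+y$ and $\alpha_t(x+y)\le\beta_t(x)+\delta_t^-y$ for $y\ge0$ and hence $\sup_x\big(\alpha_t(x)-\delta_t^+x\big)\le\inf_x\big(\beta_t(x)-\delta_t^+x\big)$ (similarly for $\delta_t^-$). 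You defer exactly this step to \cite{BK97}, which is also all the paper does for part (d).
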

\begin{proof}
Part (a) follows from Lemma~\ref{le:indicator-existence}.
Theorem~\ref{th:Karlovich-necessity} yields part (b). Part (c)
is proved in \cite[Proposition~3.20]{BK97} under the assumption that
$p$ is constant and $w\in A_p(\Gamma)$. In our case the proof is literally
the same. Again, part (d) is proved in \cite[Theorem~3.31]{BK97}
for $w\in A_p(\Gamma)$ and constant $p$. This proof works equally in our
case because in view of Theorem~\ref{th:Karlovich-necessity} we can
apply Lemma~\ref{le:WV-estimates} under the assumption that the operator
$S$ is bounded on $L^{p(\cdot)}(\Gamma,w)$.
\end{proof}
From Theorem~\ref{th:indicator-shape}(a),(c) we immediately deduce that the
set
\[
Y(p(t),\alpha_t,\beta_t):=\big\{\gamma=x+iy\in\C:1/p(t)+\alpha_t(x)\le y\le 1/p(t)+\beta_t(x)\big\}
\]
is a connected set containing points with arbitrary real parts. Hence the
set
\[
\big\{e^{2\pi\gamma}:\gamma\in Y(p(t),\alpha_t,\beta_t)\big\}
\]
is connected
and contains points arbitrarily close to the origin and to the infinity.
Let $z_1,z_2\in\C$.
The M\"obius transform $M_{z_1,z_2}$ given by (\ref{eq:Moebius}) maps
$0$ and $\infty$ to $z_1$ and $z_2$, respectively. Thus, the leaf
$\cL(z_1,z_2;p(t),\alpha_t,\beta_t)$ is a connected set containing $z_1$ and
$z_2$. More information about leaves (with many examples and computer plots)
can be found in \cite[Chap.~7]{BK97}.
\begin{lemma}
Let $\Gamma$ be a Carleson Jordan curve, $p:\Gamma\to(1,\infty)$ be a continuous
function satisfying the Dini-Lipschitz condition {\rm(\ref{eq:Dini-Lipschitz})},
and $t_1,\dots,t_n\in\Gamma$ be pairwise distinct points. Suppose
$\psi_j:\Gamma\setminus\{t_j\}\to(0,\infty)$
are continuous functions such that the functions $W_{t_j}\psi_j$ are regular,
conditions {\rm(\ref{eq:boundedness-condition})} are fulfilled, and the weight
$w$  is given by {\rm(\ref{eq:weight})}.
\begin{enumerate}
\item[{\rm(a)}]
If $t\in\Gamma\setminus\{t_1,\dots,t_n\}$, then for every $x\in\R$, the functions
$W_t^0(\eta_t^x)$ and $V_t^0(\eta_t^x w)$ are regular and submultiplicative and
\begin{eqnarray*}
&&
\alpha\big(W_t^0(\eta_t^x)\big)
=
\alpha\big(V_t^0(\eta_t^x w)\big)
=
\min\{\delta_t^-x,\delta_t^+x\},
\\
&&
\alpha\big(W_t^0(\eta_t^x)\big)
=
\alpha\big(V_t^0(\eta_t^x w)\big)
=
\min\{\delta_t^-x,\delta_t^+x\}.
\end{eqnarray*}

\item[{\rm(b)}]
If $j\in\{1,\dots,n\}$, then for every $x\in\R$, the functions $W_{t_j}^0(\eta_{t_j}^x\psi_j)$
and $V_{t_j}^0(\eta_{t_j}^xw)$ are regular and submultiplicative and
\begin{equation}\label{eq:indicator-1}
\alpha\big(W_{t_j}^0(\eta_{t_j}^x \psi_j)\big)
=
\alpha\big(V_t^0(\eta_{t_j}^x w)\big),
\quad
\beta\big(W_{t_j}^0(\eta_{t_j}^x \psi_j)\big)
=
\beta\big(V_t^0(\eta_{t_j}^x w)\big).
\end{equation}
\end{enumerate}
\end{lemma}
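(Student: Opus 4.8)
The plan is to reduce both parts to three tools already established: the comparison of $W_t^0$– and $V_t^0$–indices in Lemma~\ref{le:WV-relations}, the product estimates of Theorem~\ref{th:BK-W-regularity} and Lemma~\ref{le:WW-estimates}, and the elementary fact that multiplying a weight by a factor which is continuous and nonzero at the base point does not change $V_t^0$. Observe first that the hypotheses of the lemma include those of Theorem~\ref{th:boundedness}(a) (a Carleson Jordan curve is a simple Carleson curve), so $S$ is bounded on $L^{p(\cdot)}(\Gamma,w)$ and Lemma~\ref{le:indicator-existence} applies; in particular $V_t^0(\eta_t^x w)$ is regular and submultiplicative for all $t\in\Gamma$ and all $x\in\R$. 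This settles the regularity and submultiplicativity statements for the $V$–functions, and it remains to handle the $W$–functions and the equalities of indices.

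The auxiliary observation I would isolate first is: if $\rho:\Gamma\setminus\{t\}\to(0,\infty)$ is continuous with $W_t\rho$ regular, and $g:\Gamma\setminus\{t\}\to(0,\infty)$ is continuous and positive at $t$, then $V_t^0(\rho g)=V_t^0\rho$ as functions on $(0,\infty)$. Indeed, $\log(\rho g)$ is integrable on $\Gamma(t,R)$ for small $R$ since $\log\rho$ is (from the two-sided power bounds derived from Lemma~\ref{le:estimate}) and $\log g$ is bounded there; by linearity of $\Delta_t(\cdot,R)$ one has $H_{\rho g,t}(R_1,R_2)=H_{\rho,t}(R_1,R_2)H_{g,t}(R_1,R_2)$, and continuity of $g$ at $t$ gives $\Delta_t(\log g,R)\to\log g(t)$, hence $H_{g,t}(uR,R)\to 1$ as $R\to 0$ for each fixed $u>0$; passing to the $\limsup$ as $R\to 0$ yields the claim. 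This is a variant of the computation carried out in the proof of Lemma~\ref{le:V-relations}.

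With this in hand the two parts follow by chaining. For part (a), $t\notin\{t_1,\dots,t_n\}$, so $w$ is continuous and positive at $t$; Theorem~\ref{th:BK-W-regularity} gives that $W_t^0(\eta_t^x)$ is regular and submultiplicative with lower index $\min\{\delta_t^-x,\delta_t^+x\}$ and upper index $\max\{\delta_t^-x,\delta_t^+x\}$, Lemma~\ref{le:WV-relations} with $\psi=\eta_t^x$ transfers these to $V_t^0(\eta_t^x)$, and the observation with $\rho=\eta_t^x$ and $g=w$ gives $V_t^0(\eta_t^x w)=V_t^0(\eta_t^x)$; chaining the equalities yields $\alpha\big(W_t^0(\eta_t^x)\big)=\alpha\big(V_t^0(\eta_t^x w)\big)=\min\{\delta_t^-x,\delta_t^+x\}$ and the corresponding identity for the upper indices with $\max$. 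For part (b), fix $j$ and write $w=\psi_j\cdot g$ with $g:=\prod_{k\ne j}\psi_k$, which is continuous and positive at $t_j$; Lemma~\ref{le:WW-estimates} applied to $\eta_{t_j}^x$ (regular by Theorem~\ref{th:BK-W-regularity}) and $\psi_j$ (regular by hypothesis) shows that $W_{t_j}(\eta_{t_j}^x\psi_j)$ and $W_{t_j}^0(\eta_{t_j}^x\psi_j)$ are regular and submultiplicative, Lemma~\ref{le:WV-relations} with $\psi=\eta_{t_j}^x\psi_j$ gives $\alpha\big(W_{t_j}^0(\eta_{t_j}^x\psi_j)\big)=\alpha\big(V_{t_j}^0(\eta_{t_j}^x\psi_j)\big)$ and the analogous identity for $\beta$, and the observation with $\rho=\eta_{t_j}^x\psi_j$ and $g=\prod_{k\ne j}\psi_k$ gives $V_{t_j}^0(\eta_{t_j}^x w)=V_{t_j}^0(\eta_{t_j}^x\psi_j)$; combining the identities yields~(\ref{eq:indicator-1}).

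I do not expect a genuine obstacle: once the auxiliary observation is available, the rest is pure bookkeeping with Theorem~\ref{th:BK-W-regularity}, Lemma~\ref{le:WW-estimates} and Lemma~\ref{le:WV-relations}. The only slightly delicate ingredient is the local integrability of $\log\rho$ near $t$ for $\rho=\eta_t^x$ and $\rho=\eta_t^x\psi_j$, which follows from the power estimates of Lemma~\ref{le:estimate} together with the Carleson condition, exactly as in the proof of Lemma~\ref{le:ersatz}.
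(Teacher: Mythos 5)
Your argument is correct and follows essentially the same route as the paper: Theorem~\ref{th:BK-W-regularity} plus Lemma~\ref{le:WW-estimates} for regularity of $W_{t}^0(\eta_t^x\psi_j)$, Lemma~\ref{le:WV-relations} to pass from $W^0$ to $V^0$, and then removal of the factor of $w$ that is continuous and nonzero at the base point. The paper handles that last step by citing Lemma~\ref{le:V-relations} (two-sided comparison of $H_{w,t}$ and $H_{\psi_j,t}$ by constants, applied after the obvious relabeling to $\eta_{t}^x w=(\eta_{t}^x\psi_j)\prod_{k\ne j}\psi_k$), whereas you reprove a slightly sharper variant of the same computation; this is an immaterial difference.
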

\begin{proof}
Let us prove a slightly more difficult part (b). Fix $t_j\in\{t_1,\dots,t_n\}$
and $x\in\R$.
By Theorem~\ref{th:BK-W-regularity}, the functions $W_{t_j}(\eta_{t_j}^x)$
and $W_{t_j}^0(\eta_{t_j}^x)$ are regular and submultiplicative. Then, in
view of Lemma~\ref{le:WW-estimates}, the functions $W_{t_j}(\eta_{t_j}^x\psi_j)$
and $W_{t_j}^0(\eta_{t_j}^x\psi_j)$ are regular and submultiplicative.
By Lemma~\ref{le:WV-relations}, the function $V_{t_j}^0(\eta_{t_j}^x\psi_j)$
is regular and submultiplicative and
\begin{equation}\label{eq:idicator-2}
\alpha\big(W_{t_j}^0(\eta_{t_j}^x\psi_j)\big)
=
\alpha\big(V_{t_j}^0(\eta_{t_j}^x\psi_j)\big),
\quad
\beta\big(W_{t_j}^0(\eta_{t_j}^x\psi_j)\big)
=
\beta\big(V_{t_j}^0(\eta_{t_j}^x\psi_j)\big).
\end{equation}
From Theorem~\ref{th:Karlovich-necessity} we know that $\log w\in BMO(\Gamma,t)$.
Therefore, in view of Lemma~\ref{le:WV-estimates}, the function $V_{t_j}^0(\eta_{t_j}^xw)$
is regular and submultiplicative. By Lemma~\ref{le:V-relations},
\begin{equation}\label{eq:indicator-3}
\alpha\big(V_{t_j}^0(\eta_{t_j}^xw)\big)
=
\alpha\big(V_{t_j}^0(\eta_{t_j}^x\psi_j)\big),
\quad
\beta\big(V_{t_j}^0(\eta_{t_j}^xw)\big)
=
\beta\big(V_{t_j}^0(\eta_{t_j}^x\psi_j)\big).
\end{equation}
Combining (\ref{eq:idicator-2})--(\ref{eq:indicator-3}), we arrive at
(\ref{eq:indicator-1}). Part (b) is proved. The proof of part (a)
is analogous.
\end{proof}
This lemma says that, under the assumptions of Theorem~\ref{th:boundedness},
the functions $\alpha_t^*$ and $\beta_t^*$ are well-defined by
(\ref{eq:indicator*-1})--(\ref{eq:indicator*-2})   and
\begin{equation}\label{eq:indicator-functions}
\alpha_t^*(x)=\alpha_t(x),
\quad
\beta_t^*(x)=\beta_t(x)
\quad (x\in\R)
\end{equation}
for all $t\in\Gamma$. We say that the functions $\alpha_t^*$ and $\beta_t^*$
are the indicator functions of the triple $(\Gamma,p,w)$ at the point $t\in\Gamma$.
\subsection{Necessary conditions for Fredholmness}
The following necessary conditions for Fredholmness were obtained by the
author \cite[Theorem~8.1]{Karlovich03}.
\begin{theorem}\label{th:Fredholmness-necessity}
Let $\Gamma$ be a Carleson Jordan curve and let $p:\Gamma\to(1,\infty)$
be a continuous function satisfying the Dini-Lipschitz condition
{\rm(\ref{eq:Dini-Lipschitz})}. Suppose $w:\Gamma\to[0,\infty]$ is an
arbitrary weight such that the operator $S$ is bounded on $L^{p(\cdot)}(\Gamma,w)$.
If the operator $aP+Q$, where $a\in PC(\Gamma)$, is Fredholm on the
weighted Nakano space $L^{p(\cdot)}(\Gamma,w)$, then $a(t\pm 0)\ne 0$
and
\[
-\frac{1}{2\pi}\arg\frac{a(t-0)}{a(t+0)}+\frac{1}{p(t)}+
\theta\alpha_t\left(\frac{1}{2\pi}\log\left|\frac{a(t-0)}{a(t+0)}\right|\right)
+
(1-\theta)\beta_t\left(\frac{1}{2\pi}\log\left|\frac{a(t-0)}{a(t+0)}\right|\right)
\notin\Z
\]
for all $\theta\in[0,1]$ and all $t\in\Gamma$.
\end{theorem}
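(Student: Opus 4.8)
The plan is to reduce the Fredholmness of $aP+Q$ on the weighted Nakano space to the invertibility of an explicitly constructed "building block" operator at each point $t\in\Gamma$ and then to extract the stated necessary condition from the non-invertibility of the corresponding local operators whenever the index expression is an integer. First I would invoke Theorem~\ref{th:necessity} to conclude that $a^{-1}\in L^\infty(\Gamma)$, so that $a(t\pm 0)\neq 0$ for every $t\in\Gamma$; this already disposes of the first assertion. By Theorem~\ref{th:local_principle} (Simonenko's local principle), $aP+Q$ Fredholm implies that for each $t\in\Gamma$ the locally equivalent operator $a_tP+Q$ is Fredholm, where $a_t$ may be chosen as a standard piecewise constant function with a single jump at $t$ whose one-sided values are $a(t-0)$ and $a(t+0)$ (e.g. a suitable power function times constants). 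Thus it suffices to analyse Fredholmness of such an elementary operator.

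The key step is then to apply the Wiener--Hopf factorization criterion, Theorem~\ref{th:factorization}: $a_tP+Q$ is Fredholm if and only if $a_t$ admits a Wiener--Hopf factorization on $L^{p(\cdot)}(\Gamma,w)$. For the canonical jump function $g_{t,\gamma_0}$ with $\gamma_0=\frac{1}{2\pi i}\log\big(a(t-0)/a(t+0)\big)$ (choosing the branch so that $\operatorname{Re}\gamma_0\in[0,1)$), one writes the candidate factors explicitly in terms of $(\tau-t)^{\gamma_0}$ and checks conditions (i) and (ii) in the definition of the Wiener--Hopf factorization. Condition (ii) — the boundedness of $a_{+}^{-1}Sa_{+}I$ — reduces precisely to the boundedness of $S$ on the space $L^{p(\cdot)}(\Gamma,\varphi_{t,\gamma}w)$ for the relevant $\gamma$, and here is where Theorem~\ref{th:boundedness}, together with the indicator functions $\alpha_t,\beta_t$ (via \eqref{eq:indicator-functions} and Theorem~\ref{th:indicator-shape}), enters: this boundedness holds exactly when the point $\gamma$ lies in the interior of the region $Y(p(t),\alpha_t,\beta_t)$, i.e. when $\frac{1}{p(t)}+\alpha_t(x)<y<\frac{1}{p(t)}+\beta_t(x)$ with $\gamma=x+iy$. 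The failure of the factorization, hence the non-Fredholmness, occurs when the relevant point lands on the boundary curves $y=\frac{1}{p(t)}+\alpha_t(x)$ or $y=\frac{1}{p(t)}+\beta_t(x)$, and more generally on the whole leaf spanned by the convex combination parametrised by $\theta\in[0,1]$.

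Concretely, I would show that if the displayed quantity
\[
-\frac{1}{2\pi}\arg\frac{a(t-0)}{a(t+0)}+\frac{1}{p(t)}+\theta\,\alpha_t(x_t)+(1-\theta)\,\beta_t(x_t)\in\Z,
\]
where $x_t=\frac{1}{2\pi}\log\big|a(t-0)/a(t+0)\big|$, for some $\theta\in[0,1]$, then one can perturb $a$ within its local equivalence class (multiplying by a unimodular continuous function and a suitable power weight $\varphi_{t,\gamma}$) so that the resulting jump function fails to admit a Wiener--Hopf factorization on $L^{p(\cdot)}(\Gamma,w)$, because the boundedness hypothesis of Theorem~\ref{th:boundedness}(a) is violated at $t$ while the necessity part (b) shows it is genuinely necessary. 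The integer on the left is precisely the would-be factorization index $\kappa$, and the boundary case corresponds to the factors ceasing to have the required mapping properties; this is exactly the mechanism used in \cite[Chap.~7]{BK97} for power weights and in \cite{Karlovich09-IWOTA07,Karlovich10-IWOTA08} for the variable-exponent setting.

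The main obstacle I expect is twofold. First, one must verify carefully that the local representative $a_t$ can be taken in a form for which the Wiener--Hopf factors are explicit and for which condition (ii) is genuinely equivalent to the boundedness of $S$ on a weight of the form $\varphi_{t,\gamma}w$ covered by Theorem~\ref{th:boundedness}; this requires that $\eta_t^{x}w$ (equivalently $\varphi_{t,\gamma}w$) still satisfies the structural hypotheses, which is where the lemma identifying $\alpha_t^*=\alpha_t$, $\beta_t^*=\beta_t$ is indispensable. Second, one must relate the arithmetic condition "$\in\Z$" to the geometric statement that the origin lies on the leaf $\cL(a(t-0),a(t+0);p(t),\alpha_t,\beta_t)$; this is a computation with the M\"obius transform \eqref{eq:Moebius} and the identity $e^{2\pi\gamma}$ evaluated along $Y(p(t),\alpha_t,\beta_t)$, together with the asymptotic/convexity information from Theorem~\ref{th:indicator-shape}(c),(d) guaranteeing that the boundary is described by genuine functions. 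Once these two points are handled, the contrapositive gives the theorem: Fredholmness of $aP+Q$ forces $a(t\pm 0)\neq 0$ and the displayed expression to avoid $\Z$ for all $\theta\in[0,1]$ and all $t\in\Gamma$.
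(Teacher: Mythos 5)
The paper does not prove this theorem at all: it is quoted verbatim from \cite[Theorem~8.1]{Karlovich03}, so the only available comparison is with the strategy of that reference (which is the localization--factorization scheme of \cite[Chap.~7]{BK97}). Your overall plan --- reduce to the local representative $c_tg_{t,\gamma_t}$, invoke the Wiener--Hopf factorization criterion, and translate condition (ii) of the factorization into boundedness of $S$ on $L^{p(\cdot)}(\Gamma,\varphi_{t,k-\gamma_t}w)$, hence into index inequalities --- is indeed the right one. But as written the argument has concrete gaps. First, you use the implication ``$aP+Q$ Fredholm $\Rightarrow$ each local representative $a_tP+Q$ Fredholm,'' which is the \emph{converse} of Theorem~\ref{th:local_principle} as stated in the paper; only the direction ``all local representatives Fredholm $\Rightarrow$ $aP+Q$ Fredholm'' is provided, so the converse must be imported separately (it is in \cite[Theorem~6.13]{Karlovich03}, but you cannot get it from what is quoted here). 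Second, the reduction of condition (ii) for an \emph{arbitrary} factorization $a=a_-t^\kappa a_+$ to the boundedness of $\varphi_{t,\kappa-\gamma_t}S\varphi_{t,\gamma_t-\kappa}I$ requires knowing that the factors of $g_{t,\gamma_t}$ are essentially unique (equal to the explicit ones up to functions harmless for boundedness); you assert this reduction but it is the technical core of the necessity proof and cannot be waved through.

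The most serious defect is in the last step. You argue that when the displayed expression hits $\Z$ the factorization fails ``because the boundedness hypothesis of Theorem~\ref{th:boundedness}(a) is violated \dots while the necessity part (b) shows it is genuinely necessary.'' Part (a) is a sufficient condition, so its hypothesis failing proves nothing, and part (b) only yields the \emph{non-strict} inequalities $0\le 1/p(t)+\alpha$, $1/p(t)+\beta\le 1$; these cannot exclude the boundary case in which, say, $1/p(t)+{\rm Re}(k-\gamma_t)+\beta_t({\rm Im}(k-\gamma_t))=1$ exactly, which is precisely the case $\theta=0$ of the assertion. What is needed is the \emph{strict} necessity valid for Jordan curves, i.e.\ inequalities (\ref{eq:Karlovich-necessity-2}) of Theorem~\ref{th:Karlovich-necessity} (equivalently Theorem~\ref{th:boundedness}(c)), applied to the weight $\varphi_{t,k-\gamma_t}w$ together with the index computation $\alpha\big(V_t^0(\varphi_{t,k-\gamma_t}w)\big)={\rm Re}(k-\gamma_t)+\alpha_t({\rm Im}(k-\gamma_t))$ and its analogue for $\beta$. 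Moreover, one must run this for \emph{every} candidate index $k\in\Z$: if the interval $\big[1/p(t)-{\rm Re}\,\gamma_t+\alpha_t(\cdot),\,1/p(t)-{\rm Re}\,\gamma_t+\beta_t(\cdot)\big]$ meets $\Z$, then no integer translate of it lies in $(0,1)$, so condition (ii) fails for every possible $\kappa$ and no factorization exists. Your sketch does not make this quantification over $k$ explicit, and without the strict inequalities it does not close.
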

\subsection{Wiener-Hopf factorization of local representatives}
Fix $t\in\Gamma$. For 
$a\in PC(\Gamma)$ such that $a^{-1}\in L^\infty(\Gamma)$,
we construct a ``canonical'' function $g_{t,\gamma}$ which is locally equivalent
to $a$ at the point $t\in\Gamma$. The interior and the exterior of the unit circle
can be conformally mapped onto $D^+$ and $D^-$ of $\Gamma$, respectively,
so that the point $1$ is mapped to $t$, and the points $0\in D^+$ and
$\infty\in D^-$ remain fixed. Let $\Lambda_0$ and $\Lambda_\infty$
denote the images of $[0,1]$ and $[1,\infty)\cup\{\infty\}$ under this map.
The curve $\Lambda_0\cup\Lambda_\infty$ joins $0$ to $\infty$ and
meets $\Gamma$ at exactly one point, namely $t$. Let $\arg z$ be a
continuous branch of argument in $\mathbb{C}\setminus(\Lambda_0\cup\Lambda_\infty)$.
For $\gamma\in\mathbb{C}$, define the function $z^\gamma:=|z|^\gamma e^{i\gamma\arg z}$,
where $z\in\mathbb{C}\setminus(\Lambda_0\cup\Lambda_\infty)$. Clearly, $z^\gamma$
is an analytic function in $\mathbb{C}\setminus(\Lambda_0\cup\Lambda_\infty)$. The
restriction of $z^\gamma$ to $\Gamma\setminus\{t\}$ will be denoted by
$g_{t,\gamma}$. Obviously, $g_{t,\gamma}$ is continuous and nonzero on
$\Gamma\setminus\{t\}$. Since $a(t\pm 0)\ne 0$, we can define
$\gamma_t=\gamma\in\mathbb{C}$ by the formulas
\begin{equation}\label{eq:local-representative}
{\rm Re}\,\gamma_t:=\frac{1}{2\pi}\arg\frac{a(t-0)}{a(t+0)},
\quad
{\rm Im}\,\gamma_t:=-\frac{1}{2\pi}\log\left|\frac{a(t-0)}{a(t+0)}\right|,
\end{equation}
where we can take any value of $\arg(a(t-0)/a(t+0))$, which implies that
any two choices of ${\rm Re}\,\gamma_t$ differ by an integer only.
Clearly, there is a constant $c_t\in\mathbb{C}\setminus\{0\}$ such that
$a(t\pm 0)=c_tg_{t,\gamma_t}(t\pm 0)$, which means that $a$ is locally
equivalent to $c_tg_{t,\gamma_t}$ at the point $t\in\Gamma$.

For $t\in\Gamma$ and $\gamma\in\C$, consider the weight
\[
\varphi_{t,\gamma}(\tau):=|(\tau-t)^\gamma|,
\quad
\tau\in\Gamma\setminus\{t\}.
\]

From \cite[Lemma~7.1]{Karlovich03} we get the following.
\begin{lemma}\label{le:fact-sufficiency}
Let $\Gamma$ be a Carleson Jordan curve and let $p:\Gamma\to(1,\infty)$ be a
continuous function. Suppose $w:\Gamma\to[0,\infty]$ is an arbitrary weight
such that the operator $S$ is bounded on $L^{p(\cdot)}(\Gamma,w)$. If, for
some $k\in\Z$ and $\gamma\in\C$, the operator
$\varphi_{t,k-\gamma}S\varphi_{t,\gamma-k}I$ is bounded on $L^{p(\cdot)}(\Gamma,w)$,
then the function $g_{t,\gamma}$ admits a Wiener-Hopf factorization on
$L^{p(\cdot)}(\Gamma,w)$.
\end{lemma}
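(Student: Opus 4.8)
The plan is to exhibit an explicit Wiener--Hopf factorization $g_{t,\gamma}=a_-\tau^\kappa a_+$ and then to read off its three defining properties from the boundedness assumptions. The first step is a reformulation of the hypothesis: the map $f\mapsto\varphi_{t,\gamma-k}f$ is an isometric isomorphism of $L^{p(\cdot)}(\Gamma,w)$ onto $L^{p(\cdot)}(\Gamma,\varphi_{t,k-\gamma}w)$ (note $\varphi_{t,\gamma-k}\varphi_{t,k-\gamma}\equiv1$, and since $k\in\Z$ the factor $\varphi_{t,k-\gamma}$ is a genuine weight) that conjugates $\varphi_{t,k-\gamma}S\varphi_{t,\gamma-k}I$ into $S$; hence the hypothesis says precisely that $S$ is bounded on $L^{p(\cdot)}(\Gamma,\varphi_{t,k-\gamma}w)$. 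So we are given the boundedness of $S$ on the two weighted Nakano spaces $L^{p(\cdot)}(\Gamma,w)$ and $L^{p(\cdot)}(\Gamma,\varphi_{t,k-\gamma}w)$, differing only by a power--spiral factor centred at $t$; by Theorem~\ref{th:Karlovich-necessity}, for each of the weights $\widetilde w\in\{w,\varphi_{t,k-\gamma}w\}$ we have $\log\widetilde w\in BMO(\Gamma,t)$, the function $V_t^0\widetilde w$ is regular, and $0<1/p(t)+\alpha(V_t^0\widetilde w)$, $1/p(t)+\beta(V_t^0\widetilde w)<1$.

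Next I would construct the factors, following the classical scheme of \cite[Chap.~6]{BK97} that underlies \cite[Lemma~7.1]{Karlovich03}. Recall that $g_{t,\gamma}$ is the restriction to $\Gamma\setminus\{t\}$ of $z^\gamma=|z|^\gamma e^{i\gamma\arg z}$, holomorphic and zero-free on $\C\setminus(\Lambda_0\cup\Lambda_\infty)$; thus $g_{t,\gamma}\in PC(\Gamma)$, it is bounded and bounded away from $0$, and its only jump is at $t$. Since $\Lambda_0\subset\overline{D^+}$ joins $0$ to $t$ and $\Lambda_\infty\subset\overline{D^-}$ joins $t$ to $\infty$, one peels off a holomorphic zero-free factor on each side, using the conformal maps of the unit disk onto $D^+$ and of its exterior onto $D^-$ that fix $0$ and $\infty$ and send $1$ to $t$, obtaining $g_{t,\gamma}=a_-\tau^\kappa a_+$ with $a_+^{\pm1}$ holomorphic and zero-free in $D^+$, $a_-^{\pm1}$ holomorphic and zero-free in $D^-$ with finite nonzero values at $\infty$, and $\kappa\in\Z$. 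The integer $k$ of the hypothesis is exactly the one that has to be subtracted for this to go through: near $t$ the boundary modulus of $a_+$ is comparable to $\varphi_{t,\gamma-k}$ and that of $a_-$ to $\varphi_{t,k-\gamma}$, while away from $t$ the functions $a_\pm^{\pm1}$ are continuous, bounded and bounded away from $0$; hence $|a_+|\asymp\varphi_{t,\gamma-k}$ and $|a_-|\asymp\varphi_{t,k-\gamma}$ on all of $\Gamma$. The precise matching of $|a_\pm|$ with the power--spiral weights near $t$ relies on two-sided estimates of the type in Lemma~\ref{le:estimate}, applied with $\psi=|a_\pm|$ (whose functions $W_t|a_\pm|$ are regular, being power--spiral-like).

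It remains to verify the three conditions defining a Wiener--Hopf factorization. Property~(ii), boundedness of $a_+^{-1}Sa_+I$ on $L^{p(\cdot)}(\Gamma,w)$, is immediate from the reformulation: since $|a_+|\asymp\varphi_{t,\gamma-k}$, the weights $w/|a_+|$ and $\varphi_{t,k-\gamma}w$ are equivalent, so for $f\in L^{p(\cdot)}(\Gamma,w)$
\[
\|a_+^{-1}Sa_+f\|_{p(\cdot),w}=\|Sa_+f\|_{p(\cdot),\,w/|a_+|}\asymp\|Sa_+f\|_{p(\cdot),\,\varphi_{t,k-\gamma}w}\le C\|a_+f\|_{p(\cdot),\,\varphi_{t,k-\gamma}w}\asymp\|f\|_{p(\cdot),w},
\]
the middle inequality being the reformulated hypothesis and the last equivalence using $|a_+|\varphi_{t,k-\gamma}\asymp1$. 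Property~(i) --- that $a_+\in PL^{q(\cdot)}(\Gamma,1/w)$, $a_+^{-1}\in PL^{p(\cdot)}(\Gamma,w)$, and that $a_-,a_-^{-1}$ lie in the matching $Q$-classes modulo additive constants --- splits into an analyticity half, immediate from the holomorphy and zero-freeness of $a_\pm^{\pm1}$ in $D^\pm$ together with their finite boundary values at $\infty$, and an integrability half. For the latter, the comparabilities $|a_+|\asymp\varphi_{t,\gamma-k}$ and $|a_-|\asymp\varphi_{t,k-\gamma}$ reduce the four memberships to local integrability near $t$ of $\varphi_{t,\pm(\gamma-k)}$ against $w^{\pm1}$; after freezing the exponent at $p(t)$ (respectively $q(t)$) by the Dini--Lipschitz condition, as in the proof of Lemma~\ref{le:ersatz}, these become exactly the index inequalities for $\varphi_{t,k-\gamma}w$ and $w$ at $t$ recorded in the first paragraph, so Theorem~\ref{th:Karlovich-necessity}, with the bookkeeping organised by the shape of the indicator functions (Theorem~\ref{th:indicator-shape}), settles them. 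Uniqueness of $\kappa$ is classical.

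The step I expect to be the main obstacle is the construction of $a_\pm$ with boundary moduli \emph{exactly} comparable to $\varphi_{t,\pm(\gamma-k)}$ --- equivalently, recognising which integer $k$ the factorization forces and what the resulting index $\kappa$ is. Everything downstream hinges on those comparabilities: once the factors are pinned down, property~(ii) falls out of the reformulated hypothesis and property~(i) reduces to the index bookkeeping governed by Theorems~\ref{th:Karlovich-necessity} and~\ref{th:indicator-shape}.
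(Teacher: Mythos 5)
The paper offers no proof of this lemma at all; it is quoted from \cite[Lemma~7.1]{Karlovich03}, and your argument is in outline the standard proof of that result, so the overall strategy (reformulating the hypothesis as boundedness of $S$ on $L^{p(\cdot)}(\Gamma,\varphi_{t,k-\gamma}w)$, building explicit factors, reading off property (ii) from the conjugation identity) is the right one. Three remarks. First, the step you flag as the main obstacle is not one: the factors are completely explicit. Take $\kappa:=k$ (the integer is handed to you by the hypothesis, not forced by the factorization), $a_+(z):=(z-t)^{\gamma-k}$ with the branch fixed by simple connectedness of $D^+$, and $a_-(z):=\bigl(z/(z-t)\bigr)^{\gamma-k}$ with the branch in $D^-$ equal to $1$ at $\infty$; then $a_-(z)z^{k}a_+(z)=z^{\gamma}$ up to a nonzero constant, $|a_+|=\varphi_{t,\gamma-k}$ exactly on $\Gamma$, and $|a_-|\asymp\varphi_{t,k-\gamma}$ because $|\tau|$ and a continuous branch of $\arg\tau$ are bounded and bounded away from the dangerous values on the compact Jordan curve $\Gamma$ with $0\in D^+$; no appeal to Lemma~\ref{le:estimate} is needed. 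Second, and more substantively, your treatment of the integrability half of property (i) imports hypotheses the lemma does not have: you invoke the Dini--Lipschitz condition (\ref{eq:Dini-Lipschitz}) and Theorem~\ref{th:Karlovich-necessity} (which requires it), whereas the lemma assumes only that $p$ is continuous, and pointwise power estimates are unavailable for the completely general weight $w$. The clean route, and the one behind \cite[Lemma~7.1]{Karlovich03}, is that boundedness of $S$ on a weighted Nakano space already forces the weight to lie in $L^{p(\cdot)}(\Gamma)$ and its reciprocal in $L^{q(\cdot)}(\Gamma)$; applied to the two spaces $L^{p(\cdot)}(\Gamma,w)$ and $L^{p(\cdot)}(\Gamma,\varphi_{t,k-\gamma}w)$ on which $S$ is assumed bounded, this yields exactly $\varphi_{t,k-\gamma}w\in L^{p(\cdot)}(\Gamma)$ and $\varphi_{t,\gamma-k}w^{-1}\in L^{q(\cdot)}(\Gamma)$, which are the four required memberships, with no exponent freezing. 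Third, ``analytic and zero-free in $D^\pm$ with the right integrability'' is not literally the same as lying in ${\rm Im}\,P$ (respectively ${\rm Im}\,Q$ plus constants); closing that gap requires the Smirnov-class description of these ranges, which is standard but should be cited rather than called immediate.
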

\subsection{Sufficient conditions for Fredholmness}
The following result is one of the main ingredients of the proof of
Theorem~\ref{th:Fredholmness}. The idea of its proof is borrowed from
the proof of \cite[Proposition~7.3]{BK97}.
\begin{theorem}\label{th:Fredholmness-sufficiency}
Let $\Gamma$ be a Carleson Jordan curve
and $p:\Gamma\to(1,\infty)$ be a continuous function satisfying
the Dini-Lipschitz condition
{\rm(\ref{eq:Dini-Lipschitz})}. Suppose $t_1,\dots,t_n\in\Gamma$ are
pairwise distinct points and
$\psi_j:\Gamma\setminus\{t_j\}\to(0,\infty)$ are continuous
functions such that the functions $W_{t_j}\psi_j$ are regular and
conditions {\rm(\ref{eq:boundedness-condition})} are fulfilled for
all $j\in\{1,\dots,n\}$. If $a\in PC(\Gamma)$ is such that $a(t\pm 0)\ne 0$
and
\[
-\frac{1}{2\pi}\arg\frac{a(t-0)}{a(t+0)}+\frac{1}{p(t)}+
\theta\alpha_t^*\left(\frac{1}{2\pi}\log\left|\frac{a(t-0)}{a(t+0)}\right|\right)
+
(1-\theta)\beta_t^*\left(\frac{1}{2\pi}\log\left|\frac{a(t-0)}{a(t+0)}\right|\right)
\notin\Z
\]
for all $\theta\in[0,1]$ and all $t\in\Gamma$, then the operator $aP+Q$
is Fredholm on the Nakano space $L^{p(\cdot)}(\Gamma,w)$
with weight $w$ given by {\rm(\ref{eq:weight})}.
\end{theorem}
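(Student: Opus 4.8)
The plan is to combine the local principle (Theorem~\ref{th:local_principle}) with the Wiener--Hopf factorization machinery, reducing everything to a boundedness statement for $S$ to which Theorem~\ref{th:boundedness} applies. First I would note that, under the stated hypotheses, Theorem~\ref{th:boundedness}(a) guarantees that $S$ is bounded on $L^{p(\cdot)}(\Gamma,w)$, so that $P$ and $Q$ are bounded complementary projections and $aP+Q$ is a well-defined bounded operator. By the local principle it then suffices to exhibit, for each $t\in\Gamma$, a function $a_t\in L^\infty(\Gamma)$ locally equivalent to $a$ at $t$ with $a_tP+Q$ Fredholm on $L^{p(\cdot)}(\Gamma,w)$. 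Since $a(t\pm0)\ne0$, I would take $a_t:=c_tg_{t,\gamma_t}$, where $\gamma_t$ is defined by {\rm(\ref{eq:local-representative})} and $c_t\in\C\setminus\{0\}$; at a point of continuity of $a$ one simply uses $\gamma_t=0$, $c_t=a(t)$, $g_{t,0}\equiv1$. By Theorem~\ref{th:factorization}, $a_tP+Q$ is Fredholm as soon as $c_tg_{t,\gamma_t}$ admits a Wiener--Hopf factorization on $L^{p(\cdot)}(\Gamma,w)$; and because $c_t$ is a nonzero constant, a factorization $g_{t,\gamma_t}=g_-t^\kappa g_+$ yields $c_tg_{t,\gamma_t}=(c_tg_-)t^\kappa g_+$, the factor $c_tg_-$ retaining all the required properties. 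So the task is to produce a Wiener--Hopf factorization of $g_{t,\gamma_t}$ for every $t\in\Gamma$.

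For that I would invoke Lemma~\ref{le:fact-sufficiency}: $g_{t,\gamma_t}$ admits a Wiener--Hopf factorization on $L^{p(\cdot)}(\Gamma,w)$ provided the operator $\varphi_{t,k-\gamma_t}S\varphi_{t,\gamma_t-k}I$ is bounded on $L^{p(\cdot)}(\Gamma,w)$ for some $k\in\Z$. Since $\varphi_{t,\gamma}\varphi_{t,-\gamma}\equiv1$, the change of variable $g=\varphi_{t,\gamma_t-k}f$ shows that this operator is bounded on $L^{p(\cdot)}(\Gamma,w)$ if and only if $S$ is bounded on $L^{p(\cdot)}(\Gamma,\varphi_{t,k-\gamma_t}w)$. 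Hence everything reduces to finding $k\in\Z$ for which $S$ is bounded on $L^{p(\cdot)}(\Gamma,\varphi_{t,k-\gamma_t}w)$.

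To settle this I would apply Theorem~\ref{th:boundedness}(a) to the weight $\varphi_{t,k-\gamma_t}w$. Using $\varphi_{t,k-\gamma_t}(\tau)=|\tau-t|^{k-{\rm Re}\,\gamma_t}(\eta_t(\tau))^{-{\rm Im}\,\gamma_t}$, I would regard $\varphi_{t,k-\gamma_t}w$ as a product of continuous positive functions, one singular at each of the points $t,t_1,\dots,t_n$: the factor at $t$ is $\varphi_{t,k-\gamma_t}$ when $t\notin\{t_1,\dots,t_n\}$, whereas if $t=t_{j_0}$ the factor at $t_{j_0}$ is replaced by $\varphi_{t,k-\gamma_t}\psi_{j_0}$. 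Theorem~\ref{th:BK-W-regularity} and Lemma~\ref{le:WW-estimates} ensure that the $W$-function of each factor is regular. At the points $t_j\ne t$, conditions {\rm(\ref{eq:boundedness-condition})} hold by hypothesis. At $t$, since the power factor $|\tau-t|^{k-{\rm Re}\,\gamma_t}$ shifts both indices by $k-{\rm Re}\,\gamma_t$ (Lemmas~\ref{le:WW-estimates} and~\ref{le:indices-relations}), and in view of the definitions {\rm(\ref{eq:indicator*-1})--(\ref{eq:indicator*-2})}, condition {\rm(\ref{eq:boundedness-condition})} at $t$ becomes
\[
0<\frac{1}{p(t)}+k-{\rm Re}\,\gamma_t+\alpha_t^*(-{\rm Im}\,\gamma_t),
\qquad
\frac{1}{p(t)}+k-{\rm Re}\,\gamma_t+\beta_t^*(-{\rm Im}\,\gamma_t)<1 .
\]
Recalling that ${\rm Re}\,\gamma_t=\frac{1}{2\pi}\arg\frac{a(t-0)}{a(t+0)}$ and $-{\rm Im}\,\gamma_t=\frac{1}{2\pi}\log\left|\frac{a(t-0)}{a(t+0)}\right|$, and using that the expression in the hypothesis is affine in $\theta$ while $\alpha_t^*\le\beta_t^*$, the assumption that this expression avoids $\Z$ for all $\theta\in[0,1]$ says exactly that the closed interval with endpoints $\frac{1}{p(t)}-{\rm Re}\,\gamma_t+\alpha_t^*(-{\rm Im}\,\gamma_t)$ and $\frac{1}{p(t)}-{\rm Re}\,\gamma_t+\beta_t^*(-{\rm Im}\,\gamma_t)$ contains no integer; this in turn is equivalent to the solvability in $k\in\Z$ of the two displayed inequalities. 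For such a $k$, Theorem~\ref{th:boundedness}(a) yields the boundedness of $S$ on $L^{p(\cdot)}(\Gamma,\varphi_{t,k-\gamma_t}w)$.

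Unravelling the reductions then gives a Wiener--Hopf factorization of $g_{t,\gamma_t}$, hence of $c_tg_{t,\gamma_t}$, hence (Theorem~\ref{th:factorization}) Fredholmness of $a_tP+Q$ for every $t\in\Gamma$, and finally (Theorem~\ref{th:local_principle}) Fredholmness of $aP+Q$ on $L^{p(\cdot)}(\Gamma,w)$. I expect the main obstacle to be the third step: correctly computing the indices of the perturbed weight $\varphi_{t,k-\gamma_t}w$ at $t$ in terms of the indicator functions $\alpha_t^*,\beta_t^*$ --- which requires tracking both the $\eta_t$-exponent $-{\rm Im}\,\gamma_t$ and the real shift $k-{\rm Re}\,\gamma_t$ and invoking the index-of-products lemmas with some care --- and then recognizing that the ``no integer in a closed interval'' hypothesis is precisely the condition allowing a suitable integer $k$ to be chosen. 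A secondary point deserving attention is the bookkeeping around Wiener--Hopf factorization: absorbing the nonzero constant $c_t$ into a minus-factor and checking that Lemma~\ref{le:fact-sufficiency} and Theorem~\ref{th:factorization} are legitimately applicable under the present hypotheses.
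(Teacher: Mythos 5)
Your proposal is correct and follows essentially the same route as the paper's own proof: local principle, reduction to the local representatives $c_tg_{t,\gamma_t}$, Lemma~\ref{le:fact-sufficiency} plus Theorem~\ref{th:factorization}, and an application of Theorem~\ref{th:boundedness}(a) to the perturbed weight $\varphi_{t,k-\gamma_t}w$, with the indices computed exactly via Lemma~\ref{le:WW-estimates} (the power factor having equal upper and lower indices) and the integer $k$ supplied by the ``no integer in the interval'' hypothesis. The paper's argument is identical in substance, merely writing the two key inequalities as the $\theta=1$ and $\theta=0$ cases of the shifted condition.
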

\begin{proof}
We will follow the proof of \cite[Theorem~4.5]{Karlovich09-IWOTA07}
and \cite[Theorem~2.2]{Karlovich10-IWOTA08}.
If $aP+Q$ is Fredholm on $L^{p(\cdot)}(\Gamma,w)$, then,
by Theorem~\ref{th:necessity}, $a(t\pm 0)$ for all $t\in\Gamma$. Fix
an arbitrary $t\in\Gamma$ and choose $\gamma=\gamma_t$ as in
(\ref{eq:local-representative}). Then the function $a$ is locally
equivalent to $c_tg_{t,\gamma_t}$ at the point $t\in\Gamma$, where
$c_t\in\C\setminus\{0\}$ is some constant. In this case the main condition
of the theorem has the form
\[
1/p(t)-{\rm Re}\,\gamma_t+
\theta\alpha_t^*(-{\rm Im}\,\gamma_t)+
(1-\theta)\beta_t^*(-{\rm Im}\,\gamma_t)\notin\Z
\quad\mbox{for all}\quad \theta\in[0,1].
\]
Therefore, there exists a number $k_t\in\Z$ such that
\[
0<1/p(t)+k_t-{\rm Re}\,\gamma_t+
\theta\alpha_t^*(-{\rm Im}\,\gamma_t)+
(1-\theta)\beta_t^*(-{\rm Im}\,\gamma_t)<1
\quad\mbox{for all}\quad \theta\in[0,1].
\]
In particular, if $\theta=1$, then
\begin{equation}\label{eq:Fredholmness-sufficiency-1}
0<1/p(t)+{\rm Re}(k_t-\gamma_t)+\alpha_t^*({\rm Im}(k_t-\gamma_t));
\end{equation}
if $\theta=0$, then
\begin{equation}\label{eq:Fredholmness-sufficiency-2}
1/p(t)+{\rm Re}(k_t-\gamma_t)+\beta_t^*({\rm Im}(k_t-\gamma_t))<1.
\end{equation}
Consider the weights $\omega_t(\tau):=|\tau-t|$ and
\[
w_t:=\varphi_{t,k_t-\gamma_t}w
=
\omega_t^{{\rm Re}(k_t-\gamma_t)}
\eta_t{{\rm Im}(k_t-\gamma_t)}
\psi_1\dots\psi_n.
\]
If $t\in\Gamma\setminus\{t_1,\dots,t_n\}$, then the weight $w=\psi_1\dots\psi_n$
has no singularity at $t$ and
\[
\varphi_{t,k_t-\gamma_t}
=
\omega_t^{{\rm Re}(k_t-\gamma_t)}
\eta_t^{{\rm Im}(k_t-\gamma_t)}
\]
is a continuous function on $\Gamma\setminus\{t\}$. If $t=t_j\in\{t_1,\dots,t_n\}$,
then the weight $w/\psi_j$ has no singularity at $t_j$ and
\[
\varphi_{t_j,k_{t_j}-\gamma_{t_j}}\psi_j
=
\omega_{t_j}^{{\rm Re}(k_{t_j}-\gamma_{t_j})}
\eta_{t_j}^{{\rm Im}(k_{t_j}-\gamma_{t_j})}\psi_j
\]
is a continuous function on $\Gamma\setminus\{t_j\}$. Thus, in both cases,
the weight $w_t$ is of the same form as the weight $w$.

It is easy to see that the function $W_t(\omega_t^{{\rm Re}(k_t-\gamma_t)})$ is regular
and submultiplicative and
\[
\alpha\big(W_t^0(\omega_t^{{\rm Re}(k_t-\gamma_t)})\big)
=
\beta\big(W_t^0(\omega_t^{{\rm Re}(k_t-\gamma_t)})\big)
=
{\rm Re}(k_t-\gamma_t)
\quad\mbox{for every}\quad t\in\Gamma.
\]
Then, by Lemma~\ref{le:WW-estimates}, the functions
$W_t(\varphi_{t,k_t-\gamma_t})$ and
$W_t^0(\varphi_{t,k_t-\gamma_t})$  are regular and
submultiplicative and
\begin{eqnarray}
\label{eq:Fredholmness-sufficiency-3}
\alpha\big(W_t^0(\varphi_{t,k_t-\gamma_t})\big)
&=&
{\rm Re}(k_t-\gamma_t)+\alpha\big(W_t(\eta_t^{{\rm Im}(k_t-\gamma_t)})\big)
\\
\nonumber
&=&
{\rm Re}(k_t-\gamma_t)+\alpha_t^*({\rm Im}(k_t-\gamma_t)),
\\
\label{eq:Fredholmness-sufficiency-4}
\beta\big(W_t^0(\varphi_{t,k_t-\gamma_t})\big)
&=&
{\rm Re}(k_t-\gamma_t)+\beta\big(W_t(\eta_t^{{\rm Im}(k_t-\gamma_t)})\big)
\\
\nonumber
&=&
{\rm Re}(k_t-\gamma_t)+\beta_t^*({\rm Im}(k_t-\gamma_t))
\end{eqnarray}
for all $t\in\Gamma\setminus\{t_1,\dots,t_n\}$. Analogously, if $t=t_j\in\{t_1,\dots,t_n\}$,
then the function $W_{t_j}(\varphi_{t_j,k_{t_j}-\gamma_{t_j}}\psi_j)$ is regular
and submultiplicative and
\begin{eqnarray}
\alpha\big(W_{t_j}^0(\varphi_{t_j,k_{t_j}-\gamma_{t_j}}\psi_j)\big)
&=&
{\rm Re}(k_{t_j}-\gamma_{t_j})+\alpha_{t_j}^*({\rm Im}(k_{t_j}-\gamma_{t_j})),
\label{eq:Fredholmness-sufficiency-5}
\\
\beta\big(W_{t_j}^0(\varphi_{t_j,k_{t_j}-\gamma_{t_j}}\psi_j)\big)
&=&
{\rm Re}(k_{t_j}-\gamma_{t_j})+\beta_{t_j}^*({\rm Im}(k_{t_j}-\gamma_{t_j})).
\label{eq:Fredholmness-sufficiency-6}
\end{eqnarray}
Combining relations (\ref{eq:Fredholmness-sufficiency-1})--(\ref{eq:Fredholmness-sufficiency-6})
with conditions (\ref{eq:boundedness-condition}), we see that,
by Theorem~\ref{th:boundedness}, the operator $S$ is bounded on
$L^{p(\cdot)}(\Gamma,w_t)=L^{p(\cdot)}(\varphi_{t,k_t-\gamma_t}w)$,
where $t\in\Gamma$. Therefore the operator $\varphi_{t,k_t-\gamma_t}S\varphi_{t,\gamma_t-k_t}I$
is bounded on $L^{p(\cdot)}(\Gamma,w)$.

Then, in view of Lemma~\ref{le:fact-sufficiency}, the function $g_{t,\gamma_t}$
admits a Wiener-Hopf factorization on $L^{p(\cdot)}(\Gamma,w)$. From
Theorem~\ref{th:factorization} we deduce that the operator $g_{t,\gamma_t}P+Q$
is Fredholm. It is not difficult to see that in this case the operator
$c_tg_{t,\gamma_t}P+Q$ is also Fredholm. Thus, for all local representatives
$c_tg_{t,\gamma_t}$ of the coefficient $a$, the operators $c_tg_{t,\gamma_t}P+Q$
are Fredholm. To finish the proof, it remains to apply the local principle
(Theorem~\ref{th:local_principle}), which says that the operator $aP+Q$
is Fredholm.
\end{proof}
\subsection{Proof of Theorem~{\rm\ref{th:Fredholmness}}}
\begin{proof}
\textit{Necessity.}
If $aP+bQ$ is Fredholm, then $a^{-1},b^{-1}\in L^\infty(\Gamma)$
by Theorem~\ref{th:necessity}. Put $c:=a/b$. Then $c(t\pm 0)\ne 0$
for all $t\in\Gamma$. Further, the operator $bI$ is invertible
on $L^{p(\cdot)}(\Gamma,w)$. Therefore, the operator
\[
cP+Q=(bI)^{-1}(aP+bQ)
\]
is Fredholm. From Theorem~\ref{th:Fredholmness-necessity}
and equalities (\ref{eq:indicator-functions}) it follows that
\begin{eqnarray}
\label{eq:Fredholmness-1}
&&
-\frac{1}{2\pi}\arg\frac{c(t-0)}{c(t+0)}+\frac{1}{p(t)}
\\
\nonumber
&&+
\theta\alpha_t^*\left(\frac{1}{2\pi}\log\left|\frac{c(t-0)}{c(t+0)}\right|\right)
+
(1-\theta)\beta_t^*\left(\frac{1}{2\pi}\log\left|\frac{c(t-0)}{c(t+0)}\right|\right)
\notin\Z
\end{eqnarray}
for all $\theta\in [0,1]$ and all $t\in\Gamma$. The latter condition in conjunction
with $c(t\pm 0)\ne 0$ for all $t\in\Gamma$ is equivalent to
\[
0\notin\bigcup_{t\in\Gamma}\cL(c(t-0),c(t+0);p(t),\alpha_t^*,\beta_t^*).
\]
Thus, the function $c=a/b$ is $L^{p(\cdot)}(\Gamma,w)$-nonsingular.
Necessity is proved.

\textit{Sufficiency.}
The $L^{p(\cdot)}(\Gamma,w)$-nonsingularity of $c=a/b$ implies that
$c(t\pm 0)\ne 0$ and (\ref{eq:Fredholmness-1}) holds for all $\theta\in[0,1]$
and all $t\in\Gamma$. Then the operator $cP+Q$ is Fredholm by
Theorem~\ref{th:Fredholmness-sufficiency}. Since $\inf\limits_{t\in\Gamma}|b(t)|>0$,
we see that the operator $bI$ is invertible. Thus, the operator
\[
aP+bQ=(bI)(cP+Q)
\]
is Fedholm.
\end{proof}

\end{document}